\newtheorem{thm}{Theorem}[section]
\newtheorem{lemma}[thm]{Lemma}
\newtheorem{cor}[thm]{Corollary}
\newtheorem{prop}[thm]{Proposition}
\newtheoremstyle{rem}{10pt}{10pt}{\rmfamily}{}{\bfseries}{.}{.5em}{}
\theoremstyle{rem}
\newtheorem{rem}[thm]{Remark}
\newtheorem{defi}[thm]{Definition}
\title{\bf Global Cauchy problems  for the nonlocal (derivative) NLS in $E^s_\sigma$}
\author{Jie Chen, Yufeng Lu and Baoxiang Wang\footnote{Corresponding author}}
\date{}
\begin{document}

\maketitle
\begin{abstract}
		 We consider the Cauchy problem for the (derivative) nonlocal NLS in super-critical function spaces $E^s_\sigma$  for which the norms are defined by
$$
\|f\|_{E^s_\sigma} = \|\langle\xi\rangle^\sigma 2^{s|\xi|}\widehat{f}(\xi)\|_{L^2}, \ s<0, \ \sigma \in \mathbb{R}.
$$
Any Sobolev space  $H^{r}$  is a subspace of $E^s_\sigma$, i.e., $H^r \subset E^s_\sigma$ for any $ r,\sigma \in \mathbb{R}$ and $s<0$.  Let $s<0$ and $\sigma>-1/2$ ($\sigma >0$) for the nonlocal NLS (for the nonlocal derivative NLS). We show the global existence and uniqueness of the solutions if the initial data belong  to $E^s_\sigma$ and their Fourier transforms are supported in $(0, \infty)$,  the smallness conditions on the initial data in $E^s_\sigma$ are not required for the global solutions.\\

{2020 MSC}: 35Q55.
		
	\end{abstract}

\section{Introduction}
	We  consider the Cauchy problem for the nonlocal nonlinear Schr\"odinger equation (NNLS)
\begin{equation}\label{NNLS}
		 \mathrm{i} u_t + \partial^2_x u + \alpha\, u^2u^* =0,  \  u(x,0) = u_0(x),
\end{equation}
and its derivative version (NdNLS)
\begin{equation}\label{NdNLS}
		 \mathrm{i} u_t + \partial^2_x u + \alpha\,  u u^*\partial_x u  =0,  \  u(x,0) = u_0(x), 
\end{equation}
where $u(x,t)$ is a complex valued function of $(x,t)\in \mathbb{R}\times[0,T)$ for some $0<T\leq \infty$, $u^*(x,t) = \bar{u}(-x, t )$ and $\bar{u}$ stands for the  conjugate complex number of $u$, $\alpha \in \mathbb{R}$.
A  more general nonlocal  derivative NLS (gNdNLS) is the following
\begin{equation}\label{gNdNLS}
		 \mathrm{i} u_t + \partial^2_x u + \alpha\,  u u^*\partial_x u + \beta\,  u^2 \partial_x u^*   =0,  \  u(x,0) = u_0(x).
\end{equation} 
NNLS was introduced by Ablowitz and Musslimani \cite{AbMu2013,AbMu2016} in 2013  when they studied the exactly solvable models of nonlinear wave propagation. It is known that the solution of NNLS satisfies the following conservation laws:
\begin{align}
M(u) & =\int_{\mathbb{R}} u(x,t)\bar{u}(-x, t ) dx = M(u_0),  \label{masscon}\\
E(u) &=  \int_{\mathbb{R}} \left( u_x (x,t)\bar{u}_x (-x, t ) + \frac{\alpha}{2} u^2(x,t)\bar{u}^2(-x, t ) \right) dx = E(u_0)  \label{energycon}
\end{align}
As an integrable Hamiltonian system, the inverse scattering transform and  soliton solutions of NNLS were discussed in \cite{AbMu2013,AbMu2016,FeLuAbMu2018,GuPe2018}. Gadzhimuradov and Agalarov  \cite{GaAg2016} showed that NNLS is gauge equivalent to the unconventional system of coupled Landau-Lifshitz
equations. The long-time asymptotic behavior for NNLS with a family of step-like, and weighted Sobolev initial data were studied in Rybalko and Shepelsky \cite{RySh2019,RySh2021,RySh2021JDE} and in Li, Yang and Fan \cite{LiYaFa2021}, respectively.

NdNLS was proposed in Zhou \cite{Zh2018}, Shi, Shen and Zhao \cite{ShShZh2019} as a nonlocal version of the derivative nonlinear Schr\"odinger equation which is integrable, invariant under spacial reversion together with complex conjugation and has the conservation
\begin{align}
M(u) & =\int_{\mathbb{R}} u(x,t)\bar{u}(-x, t ) dx = M(u_0). \label{massdnls}
\end{align}
In \cite{Zh2018}, by constructing its Darboux transformations, the explicit expressions of solutions are derived
from zero seed solutions.  Using the nonlocal gauge transformation, Shi, Shen and Zhao \cite{ShShZh2019} considered all of the possible nonlocal versions of the derivative nonlinear Schr\"odinger equations.

In the radial case $u(x,t)=u(-x,t)$, NNLS reduces to the following cubic NLS:
\begin{equation}\label{rNNLS}
		 \mathrm{i} u_t + \partial^2_x u + \alpha\, |u|^2u =0,  \  u(x,0) = u_0(x).
\end{equation}
The cubic NLS has been sufficiently studied in recent years.  Tsutsumi \cite{Tsu1987} obtained the global well-posedness of \eqref{rNNLS} in $L^2$, Harrop-Griffiths,  Killip and Visan \cite{HaKiVi2020} have shown the sharp global well-posedness of \eqref{rNNLS} in $H^\sigma$ for any $\sigma >-1/2$, where the sharpness means that \eqref{rNNLS} is ill-posed in $H^\sigma$ if $\sigma \leq -1/2$. The results of \cite{Tsu1987,HaKiVi2020} in the radial case also adapt to the NNLS \eqref{NNLS}.  However,  for the non-radial initial data, it seems difficult to develop the results of \cite{Tsu1987,HaKiVi2020}  to the NNLS \eqref{NNLS}, since the conservation laws cannot provide the {\it a priori} upper bounds of the solutions in $L^2$, which was applied in \cite{Tsu1987} to show the global well-posedness in $L^2$, and the global solutions as in \cite{HaKiVi2020} with initial data in $H^\sigma$ with $\sigma >-1/2$ seems difficult to be obtained for the NNLS in the non-radial case. Eq. \eqref{NdNLS} is related to the following derivative NLS
\begin{equation}\label{rNdNLS}
		 \mathrm{i} u_t + \partial^2_x u \pm  \mathrm{i} \, \partial_x (|u|^2u)   =0,  \  u(x,0) = u_0(x).
\end{equation}
The global well-posedness of Eq.~\eqref{rNdNLS} has been extensively studied in lower regularity spaces $H^\sigma$, $0<\sigma \leq 1/2$ (see \cite{BaPe20,GuWu2017,HaKiVi2021} and references therein). Guo and Wu consider the global well-posedness of Eq.~\eqref{rNdNLS} in $H^{1/2}$ with the condition $\|u_0\|_2 <\sqrt{4\pi}$.   Bahouri and Perelman \cite{BaPe20} removed the smallness condition on $\|u_0\|_2 <\sqrt{4\pi}$ in \cite{GuWu2017}.  Harrop-Griffiths, Killip and Visan \cite{HaKiVi2021} showed the global well-posedness for the initial data in $H^\sigma$, $1/6<\sigma<1/2$. In \cite{BaPe20,HaKiVi2021}, the integrable structure of \eqref{rNdNLS} seems to be important for the global well-posedness without smallness condition on initial data. However, the global solutions of NdNLS cannot be obtained by following the same way as the derivative NLS, since the conservation law or the integrable structures cannot formulate useful norm structure for the NdNLS.

In this paper, we will use a different way to study the global existence and uniqueness of solutions for NNLS and NdNLS by introducing a class of very rough function spaces $E^s_\sigma$ as the existence spaces of solutions. Now let us define the function spaces $E^s_\sigma$.

\subsection{Function spaces $E^s_\sigma$}

Let  $ \mathscr{S}$ be the Schwartz space  and $ \mathscr{S}'$ be its dual space. We write
$$
p_\lambda(f) = \sup_{x\in \mathbb{R}}e^{\lambda |x|}|f(x)|, \quad q_{\lambda}(f)= \sup_{\xi\in \mathbb{R}} e^{\lambda|\xi|}|\widehat{f}(\xi)|,
$$
\begin{equation*}
		\mathscr{S}_1:=\{f\in \mathscr{S}: p_\lambda(f)+q_\lambda(f)<\infty, ~\forall~\lambda>0 \}.
\end{equation*}	
	$ \mathscr{S}_1$ equipped with the system of semi-norms $\{p_\lambda+q_\lambda\}_{\lambda>0}$ is a complete locally convex linear topological space, which is said to be the Gelfand-Shilov space, cf. \cite{GeSh1968}. We denote by $ \mathscr{S}_1'$ the dual space of $ \mathscr{S}_1$.  One easily sees that
$$ \mathscr{S}_1 \subset  \mathscr{S}, \ \  \mathscr{S}' \subset  \mathscr{S}'_1.$$
$\mathscr{S}_1$ contains the translations and modulations of Gaussian $e^{-\mathrm{i}m x} e^{- |x-n|^2/2}$ and their linear combinations, which is dense in all Sobolev spaces $H^\sigma$ (cf. \cite{GeSh1968}).
The Fourier transforms on $ \mathscr{S}_1'$ can be defined by duality (cf. \cite{FeGrLiWa2021}), namely, for any $f\in  \mathscr{S}'_1$, its Fourier transform $\mathscr{F} f = \widehat{f}$ satisfies
$$
\langle \mathscr{F} f, \,  \varphi \rangle = \langle f, \,  \mathscr{F} \varphi \rangle, \ \ \forall \ \varphi  \in \mathscr{S}_1.
$$
Now we can introduce the function spaces $E^s_\sigma$ defined in \cite{ChWaWa2022}. For $s, \sigma \in \mathbb{R}$, we denote
		\begin{equation*}
			E^s_\sigma :=\{f\in \mathscr{S}_1': 	\langle\xi \rangle^{\sigma}  2^{s|\xi|}\widehat{f}(\xi)\in L^2(\mathbb{R})\}
		\end{equation*}
		 for which the norm is defined by
$\|f\|_{E^{s}_\sigma} = 	\|\langle\xi \rangle^{\sigma} 2^{s|\xi|}\widehat{f}(\xi)\|_{2}.$ $E^s_\sigma$ is a natural generalization of Sobolev spaces $H^\sigma:=(I-\Delta)^{-\sigma/2} L^2(\mathbb{R}^d)$ for which the norm is defined by
\begin{align} \label{spaceHs}
  \|f\|_{H^\sigma} := \|\langle \xi\rangle^\sigma \widehat{f} \|_{2}.
\end{align}
One easily sees that $H^\sigma = E^0_\sigma$. $E^s_\sigma$ is much rougher than $H^\sigma$  in the case $s<0$ and let us observe the following inclusion (cf. \cite{ChWaWa2022}).

\begin{prop} \label{embedding}
Let $s<0$ and $r,\sigma \in \mathbb{R}$. Then we have $H^{r} \subset E^s_\sigma$.
\end{prop}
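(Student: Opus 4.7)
The plan is to show the embedding by a direct pointwise comparison of the two weight functions in the Fourier side. Writing
\[
\|f\|_{E^s_\sigma} = \|\langle\xi\rangle^\sigma 2^{s|\xi|}\widehat{f}(\xi)\|_{L^2} = \left\|\frac{\langle\xi\rangle^\sigma 2^{s|\xi|}}{\langle\xi\rangle^r}\cdot\langle\xi\rangle^r\widehat{f}(\xi)\right\|_{L^2},
\]
it suffices to bound the multiplier $m(\xi):=\langle\xi\rangle^{\sigma-r}2^{s|\xi|}$ in $L^\infty(\mathbb{R})$ and then pull it out by H\"older's inequality.

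The boundedness of $m$ follows from the fact that $s<0$. Indeed, $2^{s|\xi|}=e^{(s\ln 2)|\xi|}$ decays exponentially as $|\xi|\to\infty$, which dominates the polynomial factor $\langle\xi\rangle^{\sigma-r}$ regardless of the sign or size of $\sigma-r$. More precisely, for any polynomial weight $\langle\xi\rangle^a$ with $a\in\mathbb{R}$ and any $\lambda>0$ one has $\sup_{\xi\in\mathbb{R}}\langle\xi\rangle^a e^{-\lambda|\xi|}<\infty$; applying this with $a=\sigma-r$ and $\lambda=-s\ln 2>0$ gives $\|m\|_{L^\infty}\le C_{s,\sigma,r}<\infty$.

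Combining these two observations,
\[
\|f\|_{E^s_\sigma}\le \|m\|_{L^\infty}\,\|\langle\xi\rangle^r\widehat{f}\|_{L^2} = C_{s,\sigma,r}\|f\|_{H^r},
\]
which proves $H^r\subset E^s_\sigma$ with a continuous inclusion. One should also briefly check that a tempered distribution $f\in H^r\subset \mathscr{S}'\subset \mathscr{S}'_1$ actually lies in $\mathscr{S}'_1$ and that $\widehat{f}$ in the sense of $\mathscr{S}'_1$ agrees with its usual Fourier transform on $\mathscr{S}'$, so the definition of the $E^s_\sigma$-norm applies; this is immediate from the inclusions $\mathscr{S}_1\subset\mathscr{S}$ and $\mathscr{S}'\subset\mathscr{S}'_1$ recalled in the excerpt.

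There is essentially no obstacle here: the whole argument reduces to the exponential-versus-polynomial comparison $\sup_\xi\langle\xi\rangle^a 2^{s|\xi|}<\infty$ valid for $s<0$ and any real $a$. The only mildly delicate point is interpreting the statement for negative $r$, but since $H^r$ is defined as a subspace of $\mathscr{S}'$ for any $r\in\mathbb{R}$ via the weighted $L^2$-norm on $\widehat{f}$, the computation above is unaffected by the sign of $r$ or $\sigma$.
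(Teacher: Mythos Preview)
Your argument is correct: the embedding reduces to the elementary pointwise bound $\sup_{\xi}\langle\xi\rangle^{\sigma-r}2^{s|\xi|}<\infty$ for $s<0$, and you have handled the ambient space issue ($\mathscr{S}'\subset\mathscr{S}'_1$) appropriately. Note that the paper does not actually supply a proof of this proposition; it simply cites \cite{ChWaWa2022}, so there is no ``paper's own proof'' to compare against here---your direct weight-comparison argument is the standard and expected one.
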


By Proposition \ref{embedding}, we see that $\cup_{r\in \mathbb{R}} H^r$ is a subset of $E^s_\sigma$ if $s<0$.
 The aim of this paper is to study the Cauchy problem for NNLS and NdNLS with initial data in $E^s_\sigma$ with $s<0$.

\subsection{Critical and super-critical spaces}

 For a nonlinear evolution equation, a Sobolev space $H^{\sigma_c}$ is said to be its critical space if it is locally or globally well-posed in $H^\sigma$ for $\sigma>\sigma_c$ and ill-posed in $H^\sigma$ for $\sigma<\sigma_c$.   $H^\sigma$ with $\sigma>\sigma_c$ ($\sigma<\sigma_c$) is said to be the subcritical (supercritical) Sobolev space. Any Banach function space $X$ satisfying $H^\sigma\subset X$ for some $\sigma<\sigma_c$   is said to be a {\it supercritical space}.

From the scaling arguments, we see that NNLS has a scaling critical Sobolev space $ H^{-1/2}$, which means that if $u$  solves NNLS, so does $u_\lambda = \lambda u(\lambda x, \lambda^2 t)$  and $u_\lambda|_{t=0}$ have the equivalent norms in $  H^{-1/2}$ for all $\lambda>1$. Similarly, $L^2$ is the scaling critical space for the NdNLS.  From this point of view, $H^{-1/2}$ and $L^2$  are said to be the scaling critical spaces of NNLS and NdNLS, respectively. Following the same way as the NLS and dNLS, we see that NNLS (NdNLS) is ill-posed in $H^\sigma$ for $\sigma <-1/2$ ($\sigma<0$) (see Oh \cite{Oh2017} for instance). It follows that $E^s_\sigma$ with $s<0$ is also a supercritical space for NNLS and NdNLS.

\subsection{Main result}

There are some recent works which has been devoted to the study of the global well posedness for a class of nonlinear evolution equations in supercritical spaces (cf. \cite{Li2021,FeGrLiWa2021,ChWaWa2022}). In this paper, we are interested in the NNLS and NdNLS in the super-critical spaces $E^s_\sigma$ and we have the following results.
	\begin{thm}\label{mainresult}
	Let $s \leq 0, \ -1/2< \sigma \leq 0$, $u_0 \in E^s_\sigma$ with $\mathrm{supp}~\widehat{u}_0 \subset [ 0,\infty) $. Then  there exists a $T>0$ such that NNLS \eqref{NNLS} has a unique solution $u\in L^\infty(0,T; E^s_\sigma) \cap X^s_{\sigma,\Delta, T}$, where $X^s_{\sigma,\Delta, T} $ is defined in Definition \ref{workingspace}.
	Moreover, if $s<0$, $\mathrm{supp}~\widehat{u}_0 \subset [\varepsilon_0,\infty) $ for some $\varepsilon_0>0$,  then there exists some $j_0 := j_0(\alpha, s, \varepsilon_0, \sigma, \|u_0\|_{E^s_\sigma} )  \in \mathbb{N}$ such that NNLS has a unique global solution
$$
u \in  L^\infty (0, 2^{ \sqrt{j}}; E^{sj}_\sigma ) \cap X^{sj}_{\sigma, \Delta, 2^{\sqrt{j}} }, \ \  \forall \ j\geq j_0
$$
satisfying
$$
  \sup_{j \geq j_0 } \|u \|_{L^\infty(0, 2^{ \sqrt{j}}; E^{sj}_\sigma) \cap X^{sj}_{\sigma, \Delta, 2^{\sqrt{j}} }} <\infty.
$$
\end{thm}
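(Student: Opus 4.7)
The plan is to first establish a local-in-time well-posedness result by a contraction mapping argument in the working space $X^s_{\sigma,\Delta,T}$, a Bourgain-type space adapted to the exponential weight $2^{s|\xi|}$. The decisive structural observation is that the positivity of the Fourier support is preserved by every Picard iterate. Since $u^*(x)=\bar u(-x)$ gives $\widehat{u^*}(\xi)=\overline{\widehat{u}(\xi)}$, the support of $\widehat{u^*}$ coincides with that of $\widehat u$, so $\widehat{u^2 u^*}$ is a triple convolution of distributions all supported on $[0,\infty)$ and is therefore itself supported on $[0,\infty)$. Crucially, on this half-line the weight satisfies the exact multiplicative identity $2^{s|\xi|}=2^{s|\xi_1|}\,2^{s|\xi_2|}\,2^{s|\xi_3|}$ whenever $\xi=\xi_1+\xi_2+\xi_3$ with all $\xi_i\ge 0$, which factorizes the weighted trilinear form into unweighted copies.

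Using this identity I would prove a trilinear estimate of the form
$$
\|uvw^*\|_{X^s_{\sigma,\Delta,T}} \lesssim T^{\theta}\,\|u\|_{X^s_{\sigma,\Delta,T}}\|v\|_{X^s_{\sigma,\Delta,T}}\|w\|_{X^s_{\sigma,\Delta,T}}
$$
for some $\theta>0$, restricted to functions with Fourier transform in $[0,\infty)$; the hypothesis $\sigma>-1/2$ enters through the low-frequency analysis, paralleling the classical $H^\sigma$-critical theory for cubic NLS. Combined with the linear Schr\"odinger estimates in $E^s_\sigma$ and $X^s_{\sigma,\Delta,T}$, the Duhamel formula, and a Banach contraction, this gives a local solution on $[0,T]$ with $T=T(\|u_0\|_{E^s_\sigma})$, proving the first assertion.

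For the global statement, I exploit a decay gain from the strictly positive Fourier support. If $\mathrm{supp}\,\widehat{u_0}\subset[\varepsilon_0,\infty)$, then for every integer $j\ge 1$,
$$
\|u_0\|_{E^{sj}_\sigma}^2 = \int_{\varepsilon_0}^\infty \langle\xi\rangle^{2\sigma}\,2^{2s(j-1)\xi}\,2^{2s\xi}\,|\widehat{u_0}(\xi)|^2\,d\xi \le 2^{2s(j-1)\varepsilon_0}\,\|u_0\|_{E^s_\sigma}^2,
$$
so (since $s<0$) the data norm decays exponentially in $j$. The convolution argument shows $\mathrm{supp}\,\widehat{u(t)}\subset[\varepsilon_0,\infty)$ persists for as long as the solution exists, so the \emph{same} local theorem may be applied directly at each level $j$ to obtain an existence time bounded below by a positive power of $\|u_0\|_{E^{sj}_\sigma}^{-1}$, i.e.\ by a quantity of the form $c\,2^{c'j}$. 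Since $2^{c'j}$ dominates $2^{\sqrt{j}}$ for $j$ large enough, one obtains existence on $[0,2^{\sqrt{j}}]$ for every $j\ge j_0$, with the uniform bound following by tracking constants through the contraction estimate.

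The main obstacle is the trilinear estimate itself. For $s<0$ the weight $2^{s|\xi|}$ fails to be subadditive in the direction one needs, and positivity of the Fourier support is precisely what rescues the multiplicative identity. The technical core is then coupling this identity to the dispersive smoothing of $X^{sj}_{\sigma,\Delta,T}$ with constants independent of the level $j$, while correctly handling the low-frequency contribution governed by $\sigma>-1/2$. Once this uniform trilinear estimate is in hand, the rest of the argument reduces to bookkeeping.
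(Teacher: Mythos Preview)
Your local well-posedness argument matches the paper's approach: the positivity of the Fourier support is preserved by the Picard iteration (since $\widehat{u^*}(\xi)=\overline{\widehat{u}(\xi)}$), and on the half-line the exponential weight factorizes exactly, reducing the trilinear estimate to one governed only by the Sobolev weight $\langle\xi\rangle^\sigma$, where the condition $\sigma>-1/2$ enters through the summability of $N^{-1-2\sigma+2\varepsilon}$ in the dyadic decomposition. This is precisely the content of the paper's Proposition~\ref{nonlinearpart}.

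For the global part, however, you take a genuinely different route from the paper. The paper exploits the scaling invariance of NNLS: it rescales the data to $u_{0,\lambda}(x)=\lambda u_0(\lambda x)$, shows via Lemma~\ref{scaling} that $\|u_{0,\lambda}\|_{E^s_\sigma}\lesssim \lambda^{1/2}2^{s\lambda\varepsilon_0/2}\|u_0\|_{E^s_\sigma}\to 0$, solves locally for $u_\lambda$ in the \emph{fixed} space $E^s_\sigma$ on a long interval $[0,T_\lambda]$, and then scales back via $u(t,x)=\lambda^{-1}u_\lambda(\lambda^{-2}t,\lambda^{-1}x)$, invoking a separate scaling lemma (Lemma~\ref{scalingxssigma}) to transfer the $X^s_{\sigma,\Delta}$ bound to $X^{s\lambda}_{\sigma,\Delta}$. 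You instead keep the data fixed and vary the ambient space, using the elementary inequality $\|u_0\|_{E^{sj}_\sigma}\le 2^{s(j-1)\varepsilon_0}\|u_0\|_{E^s_\sigma}$ to run the contraction directly in $X^{sj}_{\sigma,\Delta}$. Your approach is more elementary in that it bypasses both scaling lemmas entirely; its only additional requirement is that the implicit constant in the trilinear estimate be independent of $s$, which you correctly flag, and which follows immediately from the exact weight factorization (the $2^{s|\xi|}$ factors cancel completely, leaving an estimate depending only on $\sigma$). The paper's route, by contrast, keeps the function space fixed and is closer in spirit to the standard subcritical-scaling heuristic described in the introduction. Both arguments are correct and yield the same conclusion; yours is shorter, while the paper's makes the role of the equation's scaling symmetry more transparent.
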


\begin{rem}
Theorem \ref{mainresult} needs several remarks.
\begin{itemize}


\item[(i)]  Theorem \ref{mainresult} contains the following rather rough initial data.
For any $k\in \mathbb{Z}_+$, $A\in \mathbb{C}$,  let $\delta(x)$ be the Dirac measure and
   \begin{align}
    & u_0(x)= A\, e^{\mathrm{i}x} \frac{d^k}{dx^k} F(x), \ \  or \label{examples1} \\
   & u_0(x)=  e^{\mathrm{i}x} \sum^\infty_{m=0} \frac{(-{\rm i}\lambda)^m}{m!} \frac{d^m}{dx^m} F(x), \ \ |\lambda|< |s|, \label{examples2}
  \end{align}
where $F(x) = \lim_{\varepsilon\to 0} \frac{1}{x+2\pi \mathrm{i} \varepsilon}  = \left(\delta (x) + p.v. \frac{2 \, \mathrm{ i}}{x}\right)$ denotes  the Sokhotski-Plemelj distribution, which plays a crucial role in the Sokhotski-Plemelj formula (cf. \cite{SaNa2014}, Page 374).

\item[(ii)] Since $(L^1(0,T; E^{-s}_{-\sigma}))^*= L^\infty(0,T; E^{s}_{\sigma})$ and $C^\infty_0 (0,\infty; \mathscr{S}_1) \subset L^1(0,\infty; E^{-s}_{-\sigma})$, one sees that
   \begin{equation}\label{NNLSinn}
		 \langle \mathrm{i} u_t + \partial^2_x u + \alpha\, u^2u^* , \psi \rangle =0, \ \ \forall \psi \in C^\infty_0 (0,\infty; \mathscr{S}_1),
\end{equation}
which means that the solution $u$ obtained in Theorem \ref{mainresult} satisfies NNLS in the distribution space $\mathscr{D}'(0,\infty; \mathscr{S}'_1)$.

\item[(iii)] Condition ${\rm supp} \ \widehat{u}_0 \subset [0, \infty)$ is necessary for the result of Theorem \ref{mainresult}, we can give an arbitrarily small initial value $u_0\in E^s_\sigma$ ($s<0, \sigma>0$) whose Fourier transform is supported in $\mathbb{R}$  such that the second iteration of the solution is quite large in $E^{s'}_{\sigma'}$ (for any $s', \sigma' \in \mathbb{R}$) at any small time,    see Section \ref{Illposed}.

\end{itemize}
\end{rem}

Using a nonlocal gauge transform
\begin{align} \label{iimaggauge1}
v(t,x) = u(t,x)\exp\left(- \mathrm{i}  \delta  \partial^{-1}_x  (u  {u}^* )   \right),
\end{align}
 Shi, Shen and Zhao \cite{ShShZh2019} considered various equivalent versions for the nonlocal derivative NLS. For our purpose we introduce the following nonlocal gauge transform
\begin{align} \label{rgauge1}
v(t,x) =  u(t,x)\exp\left( \frac{\alpha}{2} \partial^{-1}_x  (u  {u}^* ) \right), \ \ \partial^{-1}_x = \frac{1}{2} \left(\int^x_{-\infty}- \int^\infty_x \right).
\end{align}
Using the nonlocal gauge transform \eqref{rgauge1}, we can show that NdNLS \eqref{NdNLS} is equivalent to the following (see Section \ref{GloNDNLS})
\begin{equation}\label{aNdNLSequiv}
		 \mathrm{i} v_t + \partial^2_x v - \alpha\,  v^2 \partial_x v^* -\frac{\alpha^2}{2} v^3 (v^*)^2=0,  \  v(x,0) = v_0(x).
\end{equation}

\begin{thm}\label{mainresult2}
	Let $s \leq 0, \   \sigma > 0$, $v_0 \in E^s_\sigma$ with $\mathrm{supp}~\widehat{v}_0 \subset [ 0,\infty) $. Then  there exists a $T>0$ such that NdNLS \eqref{aNdNLSequiv} has a unique solution $v\in L^\infty(0,T; E^s_\sigma) \cap \mathcal{X}^s_{\sigma,\Delta, T}$, where $\mathcal{X}^s_{\sigma,\Delta, T}$ is defined in \eqref{Xssigmadelta}.
	Moreover, if $s<0$, $\mathrm{supp}~\widehat{v}_0 \subset [\varepsilon_0,\infty) $ for some $\varepsilon_0>0$,  then there exists some $j_0 := j_0(\alpha, s, \varepsilon_0, \sigma, \|v_0\|_{E^s_\sigma} )  \in \mathbb{N}$ such that NdNLS \eqref{aNdNLSequiv} has a unique global solution
\begin{align}
& v \in  L^\infty(0, 2^{ \sqrt{j}}; E^{sj}_\sigma ) \cap X^{sj}_{\sigma, \Delta, 2^{\sqrt{j}} }, \ \  \forall \ j\geq j_0,
 \label{1ndnlsglobal}\\
&  \sup_{j \geq j_0 } \|v\|_{L^\infty(0, 2^{ \sqrt{j}}; E^{sj}_\sigma) \cap X^{sj}_{\sigma, \Delta, 2^{\sqrt{j}} }} <\infty.
\label{2ndnlsglobal}
\end{align}
\end{thm}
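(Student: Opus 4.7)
The plan is to parallel the proof of Theorem \ref{mainresult}, now accounting for the two new features of \eqref{aNdNLSequiv}: the derivative loss in the cubic term $\alpha v^2 \partial_x v^*$ and the additional quintic correction $\tfrac{\alpha^2}{2}v^3(v^*)^2$ produced by the gauge \eqref{rgauge1}. I would write the Duhamel formulation
$$
v(t)=e^{\mathrm{i}t\partial_x^2}v_0+\mathrm{i}\alpha\!\int_0^t\! e^{\mathrm{i}(t-\tau)\partial_x^2}\bigl(v^2\partial_x v^*\bigr)(\tau)\,d\tau+\tfrac{\mathrm{i}\alpha^2}{2}\!\int_0^t\! e^{\mathrm{i}(t-\tau)\partial_x^2}\bigl(v^3 (v^*)^2\bigr)(\tau)\,d\tau,
$$
and set up a Picard iteration in $L^\infty(0,T;E^s_\sigma)\cap \mathcal{X}^s_{\sigma,\Delta,T}$. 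The structural engine of the whole argument is the identity $\widehat{v^*}(\xi)=\overline{\widehat{v}(\xi)}$: it shows that $\mathrm{supp}\,\widehat{v_0}\subset[0,\infty)$ is preserved by conjugation-reflection, by multiplication, and by $e^{\mathrm{i}t\partial_x^2}$, so every nonlinear term and every Picard iterate has Fourier support in $[0,\infty)$. In particular the weight $2^{s|\xi|}$ collapses to $2^{s\xi}$, and for a $k$-fold convolution over frequencies $\xi_1,\dots,\xi_k\geq 0$ with $\xi=\xi_1+\cdots+\xi_k$ the exact factorization $2^{s\xi}=\prod_j 2^{s\xi_j}$ turns the weighted convolution into a product of weighted Fourier $L^2$ masses.

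For the local statement I would carry out the trilinear estimate for $v^2\partial_x v^*$ and the quintilinear estimate for $v^3(v^*)^2$ in $\mathcal{X}^s_{\sigma,\Delta,T}$ using exactly this factorization. The extra $|\xi|$ from $\partial_x v^*$ is absorbed by the Schr\"odinger smoothing built into $\mathcal{X}^s_{\sigma,\Delta,T}$ together with the extra half-derivative of regularity provided by the hypothesis $\sigma>0$ (as opposed to $\sigma>-1/2$ in Theorem \ref{mainresult}); the quintic contribution is lower-order, because convolving five nonnegative-frequency factors only strengthens the exponential weight. Combining the multilinear estimate with standard maximal/Strichartz bounds for $e^{\mathrm{i}t\partial_x^2}$ gives a contraction on $[0,T]$ with $T=T(\alpha,s,\sigma,\|v_0\|_{E^s_\sigma})$, and uniqueness in the class follows from the same estimate applied to the difference of two solutions.

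For the global part, under $\mathrm{supp}\,\widehat{v_0}\subset[\varepsilon_0,\infty)$ the Fourier support of any Picard iterate stays in $[\varepsilon_0,\infty)$, so one may run the nonlinear estimate with $s$ replaced by $sj$. Each extra factor in a $k$-linear expression then carries a weight bounded by $2^{sj\varepsilon_0}$, which is super-exponentially small in $j$, and this smallness defeats the polynomial-in-$T$ loss from the Duhamel time integral. A direct computation shows that the contraction constant stays below one on $[0,2^{\sqrt{j}}]$ for every $j\geq j_0(\alpha,s,\varepsilon_0,\sigma,\|v_0\|_{E^s_\sigma})$; consistency across $j$ is forced by uniqueness, producing the tower \eqref{1ndnlsglobal} together with the uniform bound \eqref{2ndnlsglobal}.

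The main obstacle I anticipate is calibrating $\mathcal{X}^s_{\sigma,\Delta,T}$ so that the trilinear derivative estimate and the quintilinear estimate close in the same norm simultaneously. The space must supply enough Schr\"odinger smoothing to absorb $\partial_x$ while keeping the working-norm regularity at $\sigma>0$ so that the quintic term remains controlled, and it must stay compatible with the half-line convolution-into-product mechanism on the exponential weights. Once this balance is struck, the local and global arguments above follow the same template as Theorem \ref{mainresult}, the only genuinely new ingredient being the book-keeping for the quintic tail introduced by the gauge \eqref{rgauge1}.
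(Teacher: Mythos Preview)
Your proposal is essentially correct and tracks the paper's strategy closely for the local part: the paper proves precisely the trilinear estimate for $v_1v_2\partial_x v_3^*$ (Proposition~\ref{dnonlinearpart}) and the quintilinear estimate for $v_1v_2v_3v_4^*v_5^*$ (Proposition~\ref{dquintilinear}) in $\mathcal{X}^s_{\sigma,\Delta}$, using the half-line factorization $2^{s\xi}=\prod_j 2^{s\xi_j}$ you identify, together with the modulation relation $\sum_i|\xi_i^2+\tau_i|+|\xi^2+\tau|\ge 2(\xi_1+\xi_3)(\xi_2+\xi_3)$ and the bilinear/Strichartz machinery to absorb the derivative; $\sigma>0$ is exactly what makes the dyadic sums close. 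The contraction then runs as you describe.

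For the global statement your route diverges slightly from the paper's. You propose to work directly in $E^{sj}_\sigma$, using that $\mathrm{supp}\,\widehat{v_0}\subset[\varepsilon_0,\infty)$ forces $\|v_0\|_{E^{sj}_\sigma}\le 2^{s(j-1)\varepsilon_0}\|v_0\|_{E^s_\sigma}$ and that the multilinear constants in Propositions~\ref{dquintilinear}--\ref{dnonlinearpart} are uniform in $s\le 0$; this gives a lifespan $T_j\gtrsim\|v_0\|_{E^{sj}_\sigma}^{-2}\gg 2^{\sqrt{j}}$. The paper instead passes through the equation's scaling: it sets $v_{0,\lambda}(x)=\lambda^{1/2}v_0(\lambda x)$, uses Lemma~\ref{scaling} to make $\|v_{0,\lambda}\|_{E^s_\sigma}$ small, solves the rescaled problem in the \emph{fixed} space $\mathcal{X}^s_{\sigma,\Delta}$ on a long interval, and then scales back via a dedicated lemma controlling $\|u\|_{\mathcal{X}^{s\lambda}_{\sigma,\Delta}}$ by $\|u_\lambda\|_{\mathcal{X}^s_{\sigma,\Delta}}$. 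The two arguments are equivalent in effect; yours is more direct but tacitly uses the $s$-uniformity of the nonlinear estimates, while the paper's scaling formulation makes the transfer to $\mathcal{X}^{sj}_{\sigma,\Delta}$ explicit and isolates it in a separate lemma. One minor imprecision: your phrase ``each extra factor carries a weight bounded by $2^{sj\varepsilon_0}$'' is better stated as smallness of $\|v_0\|_{E^{sj}_\sigma}$ propagating through the contraction ball, not as an intrinsic gain in the multilinear inequality itself.
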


On the basis of Theorem \ref{mainresult2}, one has that

	\begin{cor}\label{mainresult3}
	Let $s < 0, \   \sigma > 0$, $u_0 \in E^s_\sigma$ with $\mathrm{supp}~\widehat{u}_0 \subset [\varepsilon_0,\infty) $ for some $\varepsilon_0>0$. Let
$
v_0 = u_0 \exp\left(  \frac{\alpha}{2} \partial^{-1}_x  (u_0  {u}_0^* ) \right).
$
Then NdNLS \eqref{aNdNLSequiv} has a unique global solution $v$ satisfying \eqref{1ndnlsglobal} and  \eqref{2ndnlsglobal}. Moreover,
$$
u(t,x) = v(t,x) \exp\left( - \frac{\alpha}{2} \partial^{-1}_x  (v  {v}^* ) \right)
$$
is the global solution of NdNLS \eqref{NdNLS} and there exists some $j_0 := j_0(s, \varepsilon_0, \sigma, \|u_0\|_{E^s_\sigma} )  \in \mathbb{N}$ such that
\begin{align*}
  u \in  L^\infty(0, 2^{ \sqrt{j}}; E^{sj}_\sigma )\cap \mathcal{X}^{sj}_{\sigma, \Delta, 2^{ \sqrt{j}}} , \ \   \sup_{j \geq j_0 } \|u\|_{L^\infty(0, 2^{ \sqrt{j}}; E^{sj}_\sigma) \cap \mathcal{X}^{sj}_{\sigma, \Delta, 2^{ \sqrt{j}}} } <\infty, \ \  \   \forall \ j\geq j_0.
\end{align*}
\end{cor}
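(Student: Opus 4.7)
The plan is to reduce the corollary to Theorem~\ref{mainresult2} by showing that the gauge transform $u\mapsto v$ and its inverse $v\mapsto u$ are both well-defined maps on the class of functions in $E^s_\sigma$ whose Fourier transforms are supported in $[\varepsilon_0,\infty)$, and that they preserve the relevant quantitative norms.

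\medskip
\textbf{Step 1 (preparation of initial data).} First I verify that $v_0\in E^s_\sigma$ and that $\mathrm{supp}\,\widehat{v}_0\subset[\varepsilon_0,\infty)$. Recalling $\widehat{u_0^*}(\xi)=\overline{\widehat{u}_0(\xi)}$, the spectral assumption gives $\mathrm{supp}\,\widehat{u_0 u_0^*}\subset[2\varepsilon_0,\infty)$, so that $\partial^{-1}_x(u_0u_0^*)$ is well-defined (the multiplier $1/(i\xi)$ is bounded away from the support) with Fourier support in $[2\varepsilon_0,\infty)$. Expanding the exponential as a power series and multiplying by $u_0$, each Fourier convolution summand is supported in $[\varepsilon_0+2n\varepsilon_0,\infty)$. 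The crucial observation is that for $s<0$ and frequencies $\xi_1,\xi_2\geq 0$ one has the multiplicative identity $2^{s|\xi_1+\xi_2|}=2^{s|\xi_1|}2^{s|\xi_2|}$; hence $E^s_\sigma$ (intersected with the positive-frequency sector) is an algebra under convolution-type products, and the power series defining $\exp(\tfrac{\alpha}{2}\partial^{-1}_x(u_0u_0^*))$ converges in this algebra with a bound $\exp(C\|u_0\|_{E^s_\sigma}^2/\varepsilon_0)$. This yields $\|v_0\|_{E^s_\sigma}\leq C(\varepsilon_0,\sigma,s,\alpha,\|u_0\|_{E^s_\sigma})\|u_0\|_{E^s_\sigma}$ while keeping $\mathrm{supp}\,\widehat{v}_0\subset[\varepsilon_0,\infty)$.

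\medskip
\textbf{Step 2 (application of the main theorem).} With $v_0$ satisfying the hypotheses of Theorem~\ref{mainresult2}, one obtains a unique global solution $v$ of the transformed equation \eqref{aNdNLSequiv} belonging to $L^\infty(0,2^{\sqrt{j}};E^{sj}_\sigma)\cap X^{sj}_{\sigma,\Delta,2^{\sqrt{j}}}$ for all $j\geq j_0$, with a uniform-in-$j$ bound. Since the construction proceeds by Picard iteration on initial data with positive-frequency support, the spectral localization $\mathrm{supp}\,\widehat{v}(t,\cdot)\subset[\varepsilon_0,\infty)$ persists for all $t\geq 0$.

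\medskip
\textbf{Step 3 (inverting the gauge).} I define $u(t,x)=v(t,x)\exp\bigl(-\tfrac{\alpha}{2}\partial^{-1}_x(vv^*)\bigr)$. The equivalence between NdNLS~\eqref{NdNLS} and \eqref{aNdNLSequiv} under the transform~\eqref{rgauge1} is established in Section~\ref{GloNDNLS} and is an algebraic identity (independent of regularity), so once $u$ is shown to lie in a space where the relevant products make sense it automatically solves \eqref{NdNLS} in $\mathscr{D}'(0,\infty;\mathscr{S}'_1)$. That $u(0)=u_0$ follows because the composition of the two gauge maps is the identity on positive-frequency functions (verifiable by comparing power series). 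The same algebra estimate as in Step~1, now applied at each time slice to $v(t,\cdot)$ using the invariance $\mathrm{supp}\,\widehat{v}(t,\cdot)\subset[\varepsilon_0,\infty)$ and the uniform bound from Theorem~\ref{mainresult2}, transfers the space--time bounds from $v$ to $u$, yielding the stated inclusion and the supremum estimate in $E^{sj}_\sigma\cap\mathcal{X}^{sj}_{\sigma,\Delta,2^{\sqrt{j}}}$.

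\medskip
\textbf{Main obstacle.} The delicate point is the quantitative multiplier estimate for the nonlinear gauge factor. One must show that for $s<0$, $\sigma>0$, and $f$ with $\mathrm{supp}\,\widehat{f}\subset[\varepsilon_0,\infty)$, the map $f\mapsto f\exp(c\,\partial^{-1}_x(ff^*))$ is bounded on $E^s_\sigma$ with a bound depending on $\|f\|_{E^s_\sigma}$ but \emph{not} decreasing the effective regularity level (so that the $E^{sj}_\sigma$ spaces track correctly as $j$ grows); the positive-frequency support is what makes the exponential factor an element of $E^s_\sigma$ rather than merely $E^s_\sigma$ modulo constants, and ensures that the denominator $1/(i\xi)$ from $\partial^{-1}_x$ contributes only an $O(\varepsilon_0^{-1})$ factor rather than a singular weight. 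Carrying this multiplicative algebra estimate uniformly across the scales $2^{sj}$ is where the argument needs the most care.
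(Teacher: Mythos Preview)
Your overall strategy matches the paper's: apply Proposition~\ref{BDgauge} to $u_0$ to place $v_0$ in $E^s_\sigma$ with spectrum in $[\varepsilon_0,\infty)$, invoke Theorem~\ref{mainresult2} to obtain $v$, then use the gauge equivalence (Proposition~\ref{Equivndnls}) and the boundedness of the inverse gauge to pass back to $u$. Steps~1 and~2 are fine and essentially reproduce Proposition~\ref{BDgauge}.

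The gap is in Step~3. You write that ``the same algebra estimate as in Step~1, now applied at each time slice'' yields the bound in $\mathcal{X}^{sj}_{\sigma,\Delta,2^{\sqrt{j}}}$. This is not correct: a time-slice argument only controls $\sup_t\|u(t)\|_{E^{sj}_\sigma}$, i.e.\ the $L^\infty(0,2^{\sqrt{j}};E^{sj}_\sigma)$ part. The $\mathcal{X}^{sj}_{\sigma,\Delta}$ norm is built from $U^2_\Delta$, a genuine space--time norm that does not decompose as a supremum over fixed-time norms, so pointwise-in-$t$ algebra bounds do not transfer to it. The paper handles this with a separate result, Proposition~\ref{xBDgauge}, which proves
\[
\|\mathcal{G}(u)\|_{\mathcal{X}^s_{\sigma,\Delta}} \le T\,\exp\bigl(C\|u\|_{\mathcal{X}^s_{\sigma,\Delta}}^2\bigr)\,\|u\|_{\mathcal{X}^s_{\sigma,\Delta}}
\]
by duality against $\mathcal{Y}^{-s}_{-\sigma,\Delta}$: one expands the exponential, localizes $v$ dyadically via $P_N$, and in each piece forces at least one factor of $u$ to carry frequency $\gtrsim N/(2k+1)$ so that Cauchy--Schwarz over $N$ closes the $\ell^2$ sum. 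The factor $T=2^{\sqrt{j}}$ coming out of this estimate is harmless here because Theorem~\ref{mainresult2} gives $\|v\|_{\mathcal{X}^{sj}_{\sigma,\Delta}}\lesssim 2^{-\sqrt{j}}$, so the product stays bounded uniformly in $j$. Your ``Main obstacle'' paragraph correctly senses that something nontrivial is needed, but the mechanism you describe (time-slice algebra) is not the one that works; you need the duality argument of Proposition~\ref{xBDgauge}.
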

 Similar to NNLS, condition ${\rm supp} \ \widehat{u}_0 \subset [0, \infty)$ is also necessary for the result of Theorem \ref{mainresult2} and Corollary \ref{mainresult3}. There is no difficulty to generalize the results of Theorem \ref{mainresult2} and Corollary \ref{mainresult3} to gNdNLS \eqref{gNdNLS}, see Section \ref{sectgNdNLS}.

\subsection{Main ideas}

We now indicate the crucial ideas to solve the NNLS in $E^s_\sigma$.  One of the main difficulties to solve nonlinear dispersive equation in supercritical Sobolev spaces $H^\sigma$ lies in the fact that the ill-posedness occurs in supercritical Sobolev spaces $H^\sigma$. However, $E^s_\sigma$ type spaces have some good algebraic structures when the frequency is localized in the half-line, so that the nonlinear estimates become available in $E^s_\sigma$ type spaces. So, one can get a local well-posedness result of NNLS for initial data in $E^s_\sigma$ if their Fourier transforms are supported in the half-line.

Observing the scaling solution $u_\lambda (t,x)= \lambda u(\lambda^2 t, \lambda x)$ of NNLS in super-critical space $ H^\sigma$ with $\sigma<-1/2$, we have
\begin{align*}
\|u_\lambda|_{t=0}\|_{ H^\sigma}  \lesssim   \lambda^{\sigma +1/2} \|u_0\|_{H^\sigma} \to 0, \ \ \lambda \to \infty.
\end{align*}
It follows that the scaling solution can have very small initial data in supercritical Sobolev spaces. The above observation is also adapted to the supercritical space $E^s_\sigma$ ($s<0$), $u_\lambda|_{t=0}$ will vanish in $E^s_\sigma$  if ${\rm supp} \,\widehat{u}_0 \subset (0,\infty)$ and $\lambda \to \infty$. Moreover, the small initial data  $u_\lambda|_{t=0}$ will allow the existence time of the solutions becomes larger and larger as $\lambda \nearrow \infty$.  Taking $\lambda\to \infty$, we can get the solution exists at any time, see Section \ref{sectNNLS} for details.

\subsection{Notations and organization}

 Throughout this paper, we denote by $L^p_x$ the Lebesgue space on $x\in \mathbb{R}$ and write
$ \|f\|_p :=\|f\|_{L^p_x}. $
For any function $g$ of $(t,x) \in \mathbb{R}_+ \times \mathbb{R}$, we denote
$$
\|f\|_{L^\gamma_t L^p_x} = \|\|g\|_{L^p_x}\|_{L^\gamma_t},
$$
where $L^\gamma_t$ can be defined in a similar way as $L^p_x$ by replacing $\mathbb{R}^d$ with $\mathbb{R}_+ $. Let us write $\langle \nabla\rangle^s = \mathscr{F}^{-1} \langle \xi\rangle^s \mathscr{F}$, $2^{s|\nabla|}  = \mathscr{F}^{-1} 2^{s|\xi|} \mathscr{F}$.
We write $|x|=|x_1|+...+ |x_d| $, $|x|_\infty = \max_{1\leq i\leq d} |x_i|$ and $\langle x\rangle=(1+x^2_1+...+x^2_d)^{1/2}$ for $x=(x_1,...,x_d)$.  We will use the following notations. $C\ge 1, \ c\le 1$ will denote constants which can be different at different places, we will use $A\lesssim B$ to denote   $A\leqslant CB$; $A\sim B$ means that $A\lesssim B$ and $B\lesssim A$.   We denote by  $\mathscr{F}^{-1}f$ the inverse Fourier transform of $f$.  For any $1\leq p \leq \infty$,   $l^p$  stands for the (sequence) Lebesgue space.

The paper is organized as follows. In Section \ref{sectNNLS} we  show a global existence and uniqueness result for NNLS. In Section \ref{GloNDNLS}, by introducing a nonlocal gauge transform, we get an equivalent NdNLS and a global existence and uniqueness result is proven. Finally, in Section \ref{sectgNdNLS} we consider a general NdNLS and point out its global existence and uniqueness of solutions can be obtained by following the same way as in Section \ref{GloNDNLS}.

	\section{Nonlocal NLS} \label{sectNNLS}

\subsection{$X^s_{\sigma,\Delta}$ and  $Y^s_{\sigma,\Delta}$}
	We need the $U^p$ and $V^p$ spaces which were introduced in \cite{KoTa2005} (see also \cite{HaHeKo2009}).
		Let $\mathcal{Z}$ be the set of finite partitions $-\infty=t_0<t_1<\cdots<t_K = \infty$. $1\leq p<\infty$. For $\{t_k\}_{k=0}^K\subset \mathcal{Z}$ and $\{\phi_k\}_{k=0}^{K-1}\subset L^2$ with $\sum_{k=0}^{K-1}\|\phi_k\|_{L^2}^p = 1$ and $\phi_0 = 0$, we call the function $a:\mathbb{R}\rightarrow L^2$ given by $a = \sum_{k=1}^K \chi_{[t_{k-1},t_k)}\phi_{k-1}$ a $U^p$-atom. Define
		\begin{equation*}
			U^p:=\left\{u = \sum_{j=1}^\infty \lambda_ja_j: a_j~\mbox{is a}~U^p\mbox{-atom}, \lambda_j\in \mathbb{C}~\mbox{such that}~\sum_{j=1}^\infty |\lambda_j|<\infty\right\}
		\end{equation*}
		with the norm
		\begin{equation}
			\|u\|_{U^p}:=\inf\left\{\sum_{j=1}^\infty|\lambda_j|: u=\sum_{j=1}^\infty \lambda_j a_j, \lambda_j\in \mathbb{C}, a_j ~\mbox{is a}~U^p\mbox{-atom}\right\}.
		\end{equation}
Let $1\leq p<\infty$, the space $V^p$ is defined as the normed space of all functions $v:\mathbb{R}\rightarrow L^2$ such that $v(-\infty):=\lim_{t\rightarrow -\infty} v(t)$ exists and for which the norm
\begin{equation*}
\|v\|_{V^p}:= \sup_{\{t_k\}_{k=0}^K\in \mathcal{Z}} \left(\sum_{k=1}^K\|v(t_k)-v(t_{k-1})\|_{L^2}^p\right)^\frac{1}{p}
\end{equation*}
is finite, where we use the convention $v(\infty) = 0$. Let $V^p_{-, rc}$ denote all $v\in V^p$ which are right-continuous and $v(-\infty) = 0$.
We define
$  \|u\|_{U^p_{\Delta}} = \|e^{-{\rm i} t \Delta} u \|_{U^p}, \
  \|u\|_{V^p_{\Delta}} = \|e^{-{\rm i} t \Delta} u \|_{V^p}. $
Recall that Besov type Bourgain's spaces $\dot X^{s, b, q}$ are defined by
$$
\|u\|_{\dot X^{s,b,q}} := \left\| \|\chi_{|\tau+\xi^2|\in [2^{j-1}, 2^j)} |\xi|^{s} |\tau+\xi^2|^{b} \widehat{u}(\tau,\xi) \|_{L^2_{\xi,\tau}}  \right\|_{\ell^q_{j\in \mathbb{Z}}}.
$$

	\begin{prop} {\rm (\cite{HaHeKo2009})}\label{basicproperty}
		Let $1\leq p<q<\infty$.
		\begin{itemize}
			\item[$\mathrm{(i)}$] $U^p, V^p$ are Banach spaces. $V^p_{-,rc}$ is a closed subspace of $V^p$.
			\item[$\mathrm{(ii)}$] The embedding $U^p\subset V_{-,rc}^p \subset L^\infty(\mathbb{R}, L^2)$ is continuous.
			\item[$\mathrm{(iii)}$] The embedding $V_{-,rc}^p\subset U^q$ is continuous.
\item[$\mathrm{(iv)}$] $\dot X^{0, 1/2, 1} \subset U^2_{\Delta} \subset V^2_{rc,-,\Delta} \subset \dot X^{0, 1/2, \infty}$.
		\end{itemize}
	\end{prop}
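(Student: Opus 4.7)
The plan is to follow the standard arguments from the Koch--Tataru / Hadac--Herr--Koch framework; the statement is a collection of structural facts about $U^p$ and $V^p$, so I would treat the four items separately.

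For (i) and (ii): $U^p$ inherits completeness from its atomic definition, since the infimum over representations makes its norm a quotient of an $\ell^1$-norm on atoms, so absolutely convergent series converge. $V^p$ is complete because the $p$-variation is lower semicontinuous under pointwise convergence, and $V^p_{-,rc}$ is closed in $V^p$ because uniform convergence preserves right-continuity and the normalization $v(-\infty)=0$. A $U^p$-atom $a=\sum_k \chi_{[t_{k-1},t_k)}\phi_{k-1}$ satisfies $\|a(t)\|_{L^2}\leq (\sum_k\|\phi_k\|_{L^2}^p)^{1/p}=1$ for each $t$, yielding $U^p\hookrightarrow L^\infty_t L^2_x$ via the atomic definition. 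The inclusion $U^p\hookrightarrow V^p_{-,rc}$ follows by refining an arbitrary test partition to contain the atom breakpoints, so that $\sum_l\|a(s_l)-a(s_{l-1})\|_{L^2}^p\leq \sum_k\|\phi_k-\phi_{k-1}\|_{L^2}^p\leq 2^p\sum_k\|\phi_k\|_{L^2}^p=2^p$; right-continuity is built into the characteristic functions $\chi_{[t_{k-1},t_k)}$.

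Part (iii) is the main obstacle, and I would use a greedy stopping-time decomposition. Given $v\in V^p_{-,rc}$ with $\|v\|_{V^p}=1$, for each $n\in\mathbb{N}$ iteratively define times $t^{(n)}_k$ as the first instant after $t^{(n)}_{k-1}$ at which $\|v(t^{(n)}_k)-v(t^{(n)}_{k-1})\|_{L^2}\geq 2^{-n}$. The $V^p$-bound forces the total number of such stopping times to be $\lesssim 2^{np}$. Let $v_n$ be the right-continuous piecewise-constant interpolation of $v$ on this grid; then $v_n\to v$ uniformly and $v=v_0+\sum_{n\geq 0}(v_{n+1}-v_n)$, where each difference $v_{n+1}-v_n$ has jumps of size $\lesssim 2^{-n}$ across at most $\sim 2^{(n+1)p}$ intervals and thus rescales to a $U^q$-atom with prefactor $\lesssim 2^{-n}\cdot 2^{(n+1)p/q}=2^{-n(1-p/q)}\cdot C$. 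Summability in $n$ needs exactly $p<q$, which produces the embedding $V^p_{-,rc}\hookrightarrow U^q$.

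For (iv), the first inclusion uses modulation Littlewood--Paley: writing $e^{-it\Delta}u=\sum_j w_j$ with $\widehat{w_j}(\tau,\xi)$ supported in $|\tau|\sim 2^j$, one has $\|w_j\|_{L^1_t L^2_x}\lesssim 2^{-j/2}\|w_j\|_{L^2_{t,x}}$ after localizing in time, so via the Duhamel-type identity each $w_j$ is a bounded multiple of a $U^2$-atom and $\ell^1$-summability in $j$ is exactly the $\dot X^{0,1/2,1}$ norm. The middle embedding is (ii) at $p=2$. The third is the dual statement: for $v\in V^2_{-,rc}$ one pairs against $L^2$ test functions localized at modulation $2^j$ and uses the $U^2$--$V^2$ duality to obtain $\|\chi_{|\tau+\xi^2|\sim 2^j}\widehat{e^{-it\Delta}v}\|_{L^2_{\tau,\xi}}\lesssim 2^{-j/2}\|v\|_{V^2_\Delta}$, which bounds the $\dot X^{0,1/2,\infty}$ norm. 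The truly delicate step throughout is the sharp $p$-versus-$q$ bookkeeping in the greedy algorithm of (iii); once that is in place, (i), (ii), and (iv) reduce to organized manipulations of the atomic definition and modulation localization.
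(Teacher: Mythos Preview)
The paper does not give its own proof of this proposition; it is quoted verbatim from \cite{HaHeKo2009} (Hadac--Herr--Koch), so there is nothing in the paper to compare against. Your sketch follows the standard arguments from that reference: the atomic completeness for $U^p$, the step-function variation bound for $U^p\hookrightarrow V^p_{-,rc}$, the greedy stopping-time construction for $V^p_{-,rc}\hookrightarrow U^q$, and the modulation-localization/duality argument for the $\dot X^{0,1/2,1}$--$\dot X^{0,1/2,\infty}$ endpoints. These are exactly the ideas in Hadac--Herr--Koch (and the earlier Koch--Tataru work), so your approach is the canonical one.

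One small point worth tightening: in (iv), for the inclusion $\dot X^{0,1/2,1}\subset U^2_\Delta$ you write ``via the Duhamel-type identity each $w_j$ is a bounded multiple of a $U^2$-atom.'' That is not quite the mechanism; a single modulation piece $w_j$ is not itself a $U^2$-atom. The actual argument writes $w_j$ (after pulling back by $e^{-it\Delta}$) as a time integral of $L^2_x$ functions and bounds its $U^1$ norm (hence $U^2$ norm) by $\|\partial_t(e^{-it\Delta}w_j)\|_{L^1_tL^2_x}\lesssim 2^{j/2}\cdot 2^{-j/2}\|w_j\|_{\dot X^{0,1/2,1}\text{-piece}}$ via Bernstein in the $\tau$ variable, or equivalently uses the characterization $U^2\supset V^1_{-,\text{abs.\ cont.}}$ with $\|u\|_{U^2}\le \|u'\|_{L^1_tL^2_x}$. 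Your $L^1_tL^2_x$ estimate is the right ingredient, but phrase it as a $U^1$ (or absolutely continuous $V^1$) bound rather than as producing a single atom.
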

By the last inclusion of (iv) in Proposition \ref{basicproperty}, we see that

\begin{lemma}[\rm Dispersion Modulation Decay]
Suppose that the dispersion modulation $|\tau+\xi^2| \gtrsim \mu$ for a function $u\in L^2_{x,t}$, then we
\begin{align}
  \|u \|_{L^2_{x,t} }  \lesssim \mu^{-1/2} \|u\|_{V^2_\Delta}. \label{dispersiondecay}
\end{align}
\end{lemma}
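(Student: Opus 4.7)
The plan is to use the inclusion $V^2_{rc,-,\Delta} \subset \dot X^{0, 1/2, \infty}$ from Proposition \ref{basicproperty}(iv) and then convert the $\dot X^{0,1/2,\infty}$ control into an $L^2_{x,t}$ estimate by exploiting the assumed lower bound on the modulation.

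First, by Plancherel, $\|u\|_{L^2_{x,t}} = \|\widehat{u}\|_{L^2_{\tau,\xi}}$. I would perform a dyadic decomposition in the modulation variable, writing
\[
\|u\|_{L^2_{x,t}}^2 \;=\; \sum_{j \in \mathbb{Z}} \bigl\|\chi_{|\tau+\xi^2|\in [2^{j-1},2^j)}\, \widehat{u}(\tau,\xi)\bigr\|_{L^2_{\tau,\xi}}^2 .
\]
Since the Fourier support of $u$ is (by hypothesis) contained in $\{|\tau+\xi^2| \gtrsim \mu\}$, only the indices $j$ with $2^j \gtrsim \mu$ contribute to this sum.

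Next, on each dyadic shell $|\tau+\xi^2| \sim 2^j$ I would pull out a factor of $|\tau+\xi^2|^{1/2} \sim 2^{j/2}$: for every such $j$,
\[
\bigl\|\chi_{|\tau+\xi^2|\in [2^{j-1},2^j)}\, \widehat{u}\bigr\|_{L^2_{\tau,\xi}} \;\lesssim\; 2^{-j/2}\,\bigl\|\chi_{|\tau+\xi^2|\in [2^{j-1},2^j)}\, |\tau+\xi^2|^{1/2}\,\widehat{u}\bigr\|_{L^2_{\tau,\xi}} \;\lesssim\; 2^{-j/2}\,\|u\|_{\dot X^{0,1/2,\infty}},
\]
where the last inequality is simply the definition of the $\dot X^{0,1/2,\infty}$ norm. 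Squaring and summing the geometric series over $j$ with $2^j \gtrsim \mu$ gives
\[
\|u\|_{L^2_{x,t}}^2 \;\lesssim\; \Bigl(\sum_{2^j \gtrsim \mu} 2^{-j}\Bigr)\,\|u\|_{\dot X^{0,1/2,\infty}}^2 \;\lesssim\; \mu^{-1}\,\|u\|_{\dot X^{0,1/2,\infty}}^2 .
\]
Taking square roots and then invoking $V^2_{rc,-,\Delta} \subset \dot X^{0, 1/2, \infty}$ yields the desired inequality $\|u\|_{L^2_{x,t}} \lesssim \mu^{-1/2}\,\|u\|_{V^2_\Delta}$.

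There is no real obstacle here: the whole content of the lemma is a one-line geometric-series summation packaged with Plancherel, and the deep step, namely the embedding $V^2_{rc,-,\Delta} \hookrightarrow \dot X^{0,1/2,\infty}$, is quoted from Proposition \ref{basicproperty}(iv). The only small point to check is that the convention on the Fourier support is interpreted correctly, so that the restriction $2^j \gtrsim \mu$ in the dyadic sum is legitimate; once that is acknowledged, the estimate follows immediately.
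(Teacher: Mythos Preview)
Your proof is correct and matches the paper's approach exactly: the paper simply states that the lemma follows from the last inclusion in Proposition~\ref{basicproperty}(iv), namely $V^2_{rc,-,\Delta} \subset \dot X^{0,1/2,\infty}$, and your argument is precisely the natural fleshing out of that remark via Plancherel and a geometric summation over modulation shells.
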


\begin{prop} {\rm (\cite{HaHeKo2009})} \label{Updual}
		{\rm (Duality)} Let $1\leq p   <\infty$, $1/p+1/p'=1$.  Then $(U^p)^* = V^{p'}$ in the sense that
\begin{align}
T: V^{p'}  \to (U^p)^*; \ \ T(v)=B(\cdot,v), \label{dual}
\end{align}
is an isometric mapping.  The bilinear form $B: U^p\times V^{p'}$ is defined in the following way: For a partition $\mathrm{t}:= \{t_k\}^K_{k=0} \in \mathcal{Z}$, we define
 \begin{align}
B_{\mathrm{t}} (u,v) = \sum^K_{k=1}( u(t_{k-1}), \ v(t_k)-v(t_{k-1})). \label{dual2}
\end{align}
Here $( \cdot, \cdot )$ denotes the inner product on $L^2$. For  any $u\in U^p$, $v\in V^{p'}$, there exists a unique number $B(u,v)$ satisfying the following property. For any $\varepsilon>0$, there exists a partition $\mathrm{t}$ such that
$$
|B(u,v)- B_{\mathrm{t}'} (u,v)| <\varepsilon, \ \ \forall \  \mathrm{t}'    \supset \mathrm{t}.
$$
Moreover,
$$
|B(u,v)| \leq \|u\|_{U^p} \|v\|_{V^{p'}}.
$$
In particular, let $u\in V^1_{-}$ be absolutely continuous on compact interval, then for any $v\in V^{p'}$,
$$
 B(u,v) =\int ( u'(t), v(t)) dt.
$$
\end{prop}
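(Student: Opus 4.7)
The plan is to establish the proposition in three stages: well-definedness of $B$, the isometric character of $T$, and finally surjectivity. First I would verify the basic Hölder-type estimate $|B_{\mathrm{t}}(a,v)| \leq \|v\|_{V^{p'}}$ whenever $a$ is a $U^p$-atom whose breakpoints lie in $\mathrm{t}$. Writing $a = \sum_k \chi_{[t_{k-1},t_k)}\phi_{k-1}$ with $\sum_{k}\|\phi_{k-1}\|_{L^2}^p = 1$, the sum $\sum_k (\phi_{k-1}, v(t_k)-v(t_{k-1}))$ is controlled by $\ell^p$-$\ell^{p'}$ Hölder. The key algebraic observation is telescoping invariance: $B_{\mathrm{t}'}(a,v)=B_{\mathrm{t}}(a,v)$ whenever $\mathrm{t}'\supset \mathrm{t}$ and $\mathrm{t}$ already contains the breakpoints of $a$, since inserting an auxiliary point $\tau\in(t_{k-1},t_k)$ merely splits $(\phi_{k-1},v(t_k)-v(t_{k-1}))$ into two pieces summing to the same value. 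For a general $u\in U^p$ with atomic decomposition $u=\sum_j \lambda_j a_j$, I truncate to finitely many terms, select a common refinement of all their breakpoints, and take the limit: the tail in $\sum|\lambda_j|$ combined with refinement stability yields the Cauchy condition, producing a unique number $B(u,v)$ approximable by $B_{\mathrm{t}'}$ for sufficiently fine $\mathrm{t}'$, together with the bound $|B(u,v)|\leq \|u\|_{U^p}\|v\|_{V^{p'}}$ after passing to the infimum over atomic decompositions.

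For the isometry, $\|T(v)\|_{(U^p)^*}\leq \|v\|_{V^{p'}}$ is immediate. For the reverse, fix $\varepsilon>0$ and pick a partition $\{t_k\}$ with $(\sum_k \|w_k\|_{L^2}^{p'})^{1/p'} > \|v\|_{V^{p'}}-\varepsilon$, where $w_k := v(t_k)-v(t_{k-1})$. Choose
\[
\phi_{k-1} := \|w\|_{\ell^{p'}}^{\,1-p'}\,\|w_k\|_{L^2}^{p'-2}\, w_k
\]
(with the usual zero convention), so that $\sum_k \|\phi_{k-1}\|_{L^2}^p = 1$, making $u:=\sum_k \chi_{[t_{k-1},t_k)}\phi_{k-1}$ a $U^p$-atom. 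A direct computation gives $B(u,v) = \sum_k (\phi_{k-1},w_k) = \|w\|_{\ell^{p'}} \geq \|v\|_{V^{p'}}-\varepsilon$, so $T$ is an isometry.

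The hard part will be surjectivity, which is needed for the stated equality $(U^p)^* = V^{p'}$. Given $L\in (U^p)^*$, I observe that for each $t\in\mathbb{R}$ and $\phi\in L^2$ the function $\chi_{[t,\infty)}\phi$ is, up to the factor $\|\phi\|_{L^2}$, a $U^p$-atom (use the partition $\{-\infty,t,\infty\}$ with $\phi_0=0$, $\phi_1=\phi/\|\phi\|_{L^2}$). Hence $\phi\mapsto L(\chi_{[t,\infty)}\phi)$ is an anti-linear functional on $L^2$ of norm at most $\|L\|$, and the Riesz theorem provides a unique $v(t)\in L^2$ with $(\phi,v(t)) = -L(\chi_{[t,\infty)}\phi)$. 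Verifying $v\in V^{p'}$ and $T(v)=L$ reduces to testing on step functions: for any partition $\{t_k\}$ and weights $\phi_{k-1}$ with $\sum_k \|\phi_{k-1}\|_{L^2}^p=1$, the step function $u$ is a $U^p$-atom, and expanding $u=\sum_k \chi_{[t_{k-1},\infty)}\phi_{k-1}-\sum_k \chi_{[t_k,\infty)}\phi_{k-1}$ and applying the definition of $v$ yields $L(u)=\sum_k (\phi_{k-1},v(t_k)-v(t_{k-1}))$; taking the supremum gives $\|v\|_{V^{p'}}\leq \|L\|$. Right-continuity and $v(-\infty)=0$ come from continuity of $L$ together with $\|\chi_{[t,t']}\phi\|_{U^p}\to 0$ and $\|\chi_{[t,\infty)}\phi\|_{U^p}\to 0$ as $t'\searrow t$ and $t\to-\infty$, respectively (the latter using density of compactly supported atoms in $U^p$). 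The main obstacle in this step is the careful verification that a general $u\in U^p$ can be approximated by such step functions in the $B(\cdot,v)$-pairing sense, which follows from the atomic decomposition and Step 1. Finally, for absolutely continuous $u\in V^1_-$, the sum $B_{\mathrm{t}}(u,v)=\sum_k(u(t_{k-1}),v(t_k)-v(t_{k-1}))$ is a Riemann sum for $\int (u'(t),v(t))\,dt$; passing to the limit using absolute continuity of $u$ and boundedness of $v$ recovers the integration-by-parts formula.
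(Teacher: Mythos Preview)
The paper does not prove this proposition; it is quoted verbatim from \cite{HaHeKo2009} and no argument is supplied. So there is no ``paper's proof'' to compare against, and your write-up is in effect an independent attempt at the Hadac--Herr--Koch result.

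Your outline of the first two stages (well-definedness of $B$ and the isometry $\|T(v)\|_{(U^p)^*}=\|v\|_{V^{p'}}$) is correct and follows the standard route. In the surjectivity step, however, the argument you give for the boundary behaviour of the candidate $v$ is wrong: the claims $\|\chi_{[t,t')}\phi\|_{U^p}\to 0$ as $t'\searrow t$ and $\|\chi_{[t,\infty)}\phi\|_{U^p}\to 0$ as $t\to-\infty$ are both false. The function $\chi_{[t,t')}\phi$ is (after normalisation) a $U^p$-atom, so its $U^p$ norm equals $\|\phi\|_{L^2}$ regardless of how short the interval is or how far to the left it sits; the $U^p$ norm carries no smallness in the time variable. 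The correct route is to first establish the uniform bound $\sum_k\|v(t_k)-v(t_{k-1})\|_{L^2}^{p'}\leq \|L\|^{p'}$ for all partitions (which you do obtain), and then invoke the general fact that a function of bounded $p'$-variation has one-sided limits everywhere, including at $\pm\infty$; this is what furnishes the existence of $v(-\infty)$. Note also that the space $V^{p'}$ in the statement only requires $v(-\infty)$ to \emph{exist}, not to vanish; the vanishing condition belongs to the subspace $V^{p'}_{-,rc}$, which is not what is claimed here.
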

\begin{prop}[Strichartz estimate] \label{strichartz}
		Let $(q,r)$ be a admissible pair, i.e.,
		$$\frac{2}{q}+\frac{1}{r} = \frac{1}{2}, \quad 4\leq q\leq \infty.$$
		Then, we have
		$$\|e^{it\Delta}u_0\|_{L^q_tL^r_x}\lesssim \|u_0\|_{L^2}.$$
		For any $u\in U^q_\Delta, q<\infty$, we have
		\begin{equation}
			\|u\|_{L^q_tL^r_x}\lesssim \|u\|_{U^q_\Delta}.
		\end{equation}
\end{prop}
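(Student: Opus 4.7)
The plan is to prove the linear Strichartz bound first by the classical $TT^*$/Hardy-Littlewood-Sobolev (HLS) method, and then lift it to $U^q_\Delta$ using its atomic decomposition.

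For the linear part, I combine the one-dimensional dispersive estimate $\|e^{it\Delta}u_0\|_{L^\infty_x}\lesssim |t|^{-1/2}\|u_0\|_{L^1_x}$ (read off from the explicit Schr\"odinger kernel) with $L^2$-mass conservation, then Riesz-Thorin interpolate to get $\|e^{it\Delta}u_0\|_{L^r_x}\lesssim |t|^{-(1/2-1/r)}\|u_0\|_{L^{r'}_x}$ for $2\leq r\leq \infty$. By the $TT^*$ principle the target bound is equivalent to the bilinear form inequality
\begin{align*}
\Bigl|\iint \bigl(e^{i(t-s)\Delta}F(s),\,G(t)\bigr)\,ds\,dt\Bigr|\lesssim \|F\|_{L^{q'}_tL^{r'}_x}\|G\|_{L^{q'}_tL^{r'}_x}.
\end{align*}
Applying the dispersive bound in the inner integral reduces the left-hand side to an HLS integral with weight $|t-s|^{-(1/2-1/r)}=|t-s|^{-2/q}$ paired against $L^{q'}_t$ functions; the admissibility $2/q+1/r=1/2$ is exactly the HLS scaling $2/q'+(1/2-1/r)=2$, and the kernel exponent $2/q\leq 1/2<1$ lies inside the legal HLS range for every $q\geq 4$. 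The other endpoint $q=\infty$, $r=2$ is just mass conservation.

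For the $U^q_\Delta$ estimate with $q<\infty$, I use the atomic definition. Write $u=\sum_j\lambda_j a_j$ with each $a_j$ a $U^q_\Delta$-atom of the form $a_j(t)=\sum_{k=1}^{K_j}\chi_{[t^j_{k-1},t^j_k)}(t)\,e^{it\Delta}\phi^j_{k-1}$ satisfying $\sum_k\|\phi^j_{k-1}\|_2^q=1$. Applying the linear bound slice-by-slice gives
\begin{align*}
\|a_j\|_{L^q_tL^r_x}^q=\sum_k\int_{t^j_{k-1}}^{t^j_k}\|e^{it\Delta}\phi^j_{k-1}\|_{L^r_x}^q\,dt\leq \sum_k\|e^{it\Delta}\phi^j_{k-1}\|_{L^q_tL^r_x}^q\lesssim \sum_k\|\phi^j_{k-1}\|_2^q=1,
\end{align*}
so every atom has $L^q_tL^r_x$-norm $\lesssim 1$. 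The triangle inequality then yields $\|u\|_{L^q_tL^r_x}\leq \sum_j|\lambda_j|$, and passing to the infimum over atomic representations gives $\|u\|_{L^q_tL^r_x}\lesssim \|u\|_{U^q_\Delta}$.

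The only mildly delicate point is the $L^4_tL^\infty_x$ endpoint in the linear estimate, but in one spatial dimension this sits strictly inside the HLS range (kernel exponent $1/2<1$), so no Keel-Tao endpoint machinery is required. The hypothesis $q<\infty$ in the $U^q_\Delta$ bound is essential: the atomic summation is performed inside the $q$-th power of the $L^q_t$-norm over disjoint intervals, and at $q=\infty$ this step collapses, leaving only the trivial $L^\infty_tL^2_x$ embedding from Proposition \ref{basicproperty}(ii).
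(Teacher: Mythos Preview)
Your argument is correct and is the standard route: the classical $TT^*$/HLS proof of the linear Strichartz estimate, followed by the atomic transfer to $U^q_\Delta$. The paper itself does not give a proof of this proposition---it is stated as a known result (the linear estimate is classical, and the $U^q_\Delta$ version is the transference principle from \cite{HaHeKo2009})---so your write-up in fact supplies what the paper omits.
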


\begin{defi}\label{workingspace}
For $s\in \mathbb{R}$, define
$$
X^s_{\sigma,\Delta} :=\{u\in \mathscr{S}' (\mathbb{R},\mathscr{S}'_1):   \|\langle\nabla\rangle^\sigma 2^{s|\nabla|} u\|_{U^2_{\Delta}} <\infty\},
$$
$$
Y^s_{\sigma,\Delta} :=\{u\in \mathscr{S}' (\mathbb{R},\mathscr{S}'_1):   \|\langle\nabla\rangle^\sigma 2^{s|\nabla|}   u\|_{V^2_{\Delta}} <\infty\},
$$
 where $\|u\|_{U^2_\Delta} = \|e^{-it\Delta}u\|_{U^2}$, $\|v\|_{V^2_\Delta} = \|e^{-it\Delta}v\|_{V^2}$. $X^s_{\sigma,\Delta}$ and $ Y^s_{\sigma,\Delta}$ are equipped with norms
\begin{align*}
&		\|u\|_{X^s_{\sigma,\Delta}}: =  \|\langle\nabla\rangle^\sigma 2^{s|\nabla|} u\|_{U^2_{\Delta}} , \ \  \ \
 \|v\|_{Y^{ s}_{\sigma,\Delta} }:= \|\langle\nabla\rangle^\sigma 2^{s|\nabla|}   u\|_{V^2_{\Delta}},
\end{align*}
respectively. For any $T>0$, define
$$
X^s_{\sigma,\Delta,T}:=\{u\in X^s_{\sigma,\Delta} :\ u(t) = e^{i(t-T)\Delta}u(T), \ \forall~ t\geq T\}.
$$
\end{defi}

 For the free solution of the Schr\"odinger equation $u(x,t)= e^{{\rm i}t\Delta} u_0$, $\widehat{u}(\xi,\tau)$ is supported on a curve $\tau+\xi^2=0$, which is said to be the {\it dispersion relation}. For the solution $u$ of NNLS, $\widehat{u}(\xi,\tau)$ can be supported in $(\xi,\tau) \in \mathbb{R}^2$,  we need to consider the size of $|\xi^2+\tau|$, which is said to be the {\it dispersion modulation}. By the second inclusion in (ii) of Proposition \ref{basicproperty} we have
\begin{align}
X^s_{\sigma,\Delta} \subset L^\infty (\mathbb{R}, E^s_{\sigma}).   \label{mod-space2aaaa}
\end{align}

	\begin{rem}
		By Proposition \ref{Updual}, we have $(X^s_{\sigma,\Delta})^* = Y^{-s}_{-\sigma,\Delta}$.
Taking
$$
u(t) = \int_0^t e^{i(t-\tau)\Delta}F(\tau)d\tau,
$$
we have
		\begin{equation}\label{dualuse}
			\|u\|_{X^s_{\sigma,\Delta}} = \sup_{\|v\|_{Y^{-s}_{-\sigma,\Delta}}\leq 1 }
		\int_{\mathbb{R}} (F(t), v(t))dt. 
		\end{equation}
	To make sure that the above formula is meaningful and show \eqref{dualuse}, we assume that $F\in L^\infty(\mathbb{R}, L^2)$, $\widehat{F(t)}$ has compact support set and equals to $0$ except $t$ belong to some compact interval, then
	\begin{align*}
		\|u\|_{X^s_{\sigma,\Delta}} &= \left\|\langle\nabla\rangle^{\sigma} 2^{s|\nabla|} e^{-it\Delta} u \right\|_{U^2} \\
		 &= \sup_{\|v \|_{V^2 }\leq 1 } B \left(\langle\nabla\rangle^{\sigma} 2^{s|\nabla|} e^{-it\Delta} u, \ v \right)\\
		 &= \sup_{\|v \|_{V^2 }\leq 1 } B \left ( e^{-it\Delta} u, \ \langle\nabla\rangle^{\sigma} 2^{s|\nabla|} v \right)\\
          & = \sup_{\|v \|_{V^2}\leq 1 } \int_{\mathbb{R}} \left(e^{-it\Delta} F(t), \ \langle\nabla\rangle^{\sigma} 2^{s|\nabla|} v \right)dt\\
		  & = \sup_{\|v \|_{V^2}\leq 1 } \int_{\mathbb{R}}\left (F(t), \ \langle\nabla\rangle^{\sigma} 2^{s|\nabla|} e^{it\Delta} v \right)dt\\
		 & \leq \sup_{\|\tilde{v}\|_{Y^{-s}_{-\sigma,\Delta}}\leq 1} \int_{\mathbb{R}}(F(t),\tilde{v}(t))dt.
	\end{align*}
	\end{rem}

\begin{lemma} [\rm Bilinear Estimates \cite{Gu2017,GuReWa2021}]  \label{Bilinear}
Let $0<T<\infty$. Suppose that $\widehat{u}, \ \widehat{v}$ are localized in some compact intervals $I_1,I_2$ with $dist(I_1, I_2)\geq \lambda  >0$. Then for any $0<\varepsilon \ll 1$, we have
\begin{align}
\|u {v} \|_{L^2_{x, t\in[0,T]}}+  \|u\overline{v} \|_{L^2_{x, t\in[0,T]}} \lesssim (T^{\varepsilon/4} +T^{1/4}) \lambda^{-1/2 + \varepsilon} \|u\|_{V^2_\Delta} \|v\|_{V^2_\Delta}. \label{bilinear}
\end{align}
\end{lemma}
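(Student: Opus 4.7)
The plan is to establish this bilinear estimate in three stages: first prove the sharp free-solution version by a Fourier computation, then transfer to $U^2_\Delta$ via atomic decomposition, and finally pass to $V^2_\Delta$ via the embedding $V^2_{-,rc}\subset U^p$ for $p>2$, absorbing the $T$-factor through time truncation and interpolation with a trivial Strichartz bound.

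First I would establish the free-solution bilinear estimate. Taking $u=e^{it\Delta}\phi$, $v=e^{it\Delta}\psi$ with $\widehat{\phi}$ supported in $I_1$ and $\widehat{\psi}$ supported in $I_2$, a direct computation gives
$$\mathscr{F}_{t,x}(uv)(\tau,\xi)=\int \widehat{\phi}(\eta)\,\widehat{\psi}(\xi-\eta)\,\delta\!\bigl(\tau+\eta^2+(\xi-\eta)^2\bigr)\,d\eta.$$
On the support, the resonance function $\eta\mapsto \eta^2+(\xi-\eta)^2$ has derivative $2(2\eta-\xi)=2(\eta-(\xi-\eta))$ of modulus $\gtrsim\lambda$, so a change of variables and Plancherel produce
$$\|uv\|_{L^2_{t,x}(\mathbb{R}^2)}\lesssim \lambda^{-1/2}\|\phi\|_{L^2}\|\psi\|_{L^2}.$$
The case $\|u\overline{v}\|_{L^2_{t,x}}$ is analogous, noting that $\overline{v}=e^{-it\Delta}\overline{\psi}(\cdot)$ has Fourier support in $-I_2$, so the new resonance function is $\eta^2-(\xi-\eta)^2$ and the Jacobian lower bound $|2\eta-\xi|\gtrsim\lambda$ uses the same separation hypothesis.

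Next I would upgrade this to $U^2_\Delta$. Writing $u=\sum_j c_j a_j$ and $v=\sum_k d_k b_k$ as atomic decompositions whose atoms are step functions of free Schr\"odinger evolutions preserving the relevant frequency support, bilinearity and the free-solution estimate yield
$$\|uv\|_{L^2_{t,x}}\lesssim \lambda^{-1/2}\|u\|_{U^2_\Delta}\|v\|_{U^2_\Delta}.$$
To pass to $V^2_\Delta$, I would use Proposition \ref{basicproperty}(iii): $V^2_{-,rc}\hookrightarrow U^p_\Delta$ for every $p>2$. The analogous bilinear bound in $U^p_\Delta$ (proved by interpolating with the trivial Strichartz bound $L^4_tL^\infty_x\cdot L^4_tL^2_x$, which costs $T^{1/4}$ but no frequency separation) then reads
$$\|uv\|_{L^2_{t\in[0,T],x}}\lesssim T^{1/4}\,\lambda^{-1/2+\varepsilon}\|u\|_{U^p_\Delta}\|v\|_{U^p_\Delta},$$
for $p$ chosen according to $\varepsilon$; composing with $V^2\hookrightarrow U^p$ and splitting the trivial/sharp cases finally delivers the claimed $(T^{\varepsilon/4}+T^{1/4})\lambda^{-1/2+\varepsilon}$ factor.

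The main obstacle is the interpolation step that trades the logarithmic loss inherent in $V^2\hookrightarrow U^p$ for the $\lambda^\varepsilon$ power loss while keeping track of the $T$-dependence correctly. The sharp endpoint bound $\lambda^{-1/2}$ cannot be directly upgraded to $V^2_\Delta$ without some loss; one must interpolate between the $U^2$-bilinear bound and a crude bound available in $V^2$ (e.g. via $L^4_tL^\infty_x$ Strichartz on $[0,T]$), and the two regimes of the time factor $T^{\varepsilon/4}$ versus $T^{1/4}$ arise from these two endpoints. The frequency-separation hypothesis for the $u\overline{v}$ term also deserves care, since the resonance function there is $\eta^2-(\xi-\eta)^2=(2\eta-\xi)\xi$, and one must verify that the geometric assumption $\operatorname{dist}(I_1,I_2)\geq\lambda$ still produces a Jacobian of size $\gtrsim\lambda$ on the effective support.
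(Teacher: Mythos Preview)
Your overall strategy matches the paper's: prove the sharp free-solution bilinear estimate, transfer it to $U^2_\Delta$ by the atomic structure, and then pass to $V^2_\Delta$ by interpolating against a crude Strichartz bound that carries the $T$-factor. The paper takes the $U^2_\Delta$ bilinear estimate as known and uses $\|uv\|_{L^2_{x,t\in[0,T]}}\le T^{1/4}\|u\|_{L^8_tL^4_x}\|v\|_{L^8_tL^4_x}\lesssim T^{1/4}\|u\|_{U^8_\Delta}\|v\|_{U^8_\Delta}$ as the crude endpoint (your $L^4_tL^\infty_x\cdot L^4_tL^2_x$ variant would serve the same purpose).

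The gap is in how you pass to $V^2_\Delta$. You propose to first obtain a bilinear bound in $U^p_\Delta\times U^p_\Delta$ for some fixed $p>2$ by ``interpolating'' the $U^2$ and crude endpoints, and then compose with $V^2_{-,rc}\hookrightarrow U^p$. But the $U^p$ scale has no classical interpolation theory that would deliver such an intermediate $U^p\times U^p$ bilinear bound, so that step is not available by the route you sketch. The paper instead applies the Hadac--Herr--Koch decomposition lemma directly at the $V^2$ level: for each $M>0$ one splits $u=u_1+u_2$ with
\[
\tfrac{1}{M}\|u_1\|_{U^2_\Delta}+e^{\epsilon M}\|u_2\|_{U^8_\Delta}\lesssim\|u\|_{V^2_\Delta}
\]
(and likewise for $v$), inserts this into the two endpoint bilinear bounds, and then optimizes by choosing $M=\epsilon^{-1}\ln\langle\lambda\rangle$. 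This converts the logarithmic loss into a $\lambda^{\varepsilon}$ loss and yields exactly the factor $(T^{\varepsilon/4}+T^{1/4})\lambda^{-1/2+\varepsilon}$. You correctly flag this step as the main obstacle, but the mechanism you outline does not close without this decomposition lemma; it is the missing ingredient.
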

\begin{proof}
The proof follows \cite{Gu2017}.  However, \eqref{bilinear} admits arbitrarily large $T>0$, which is useful for our later purpose to consider the small initial data. We give the proof for the case $T>1$. Recall the bilinear Strichartz estimate
$$
\|u {v} \|_{L^2_{x, t\in[0,T]}} \lesssim \lambda^{-1/2} \|u\|_{U^2_\Delta} \|v\|_{U^2_\Delta}
$$
 By H\"older's and Strichartz' inequalities,
$$
\|u {v} \|_{L^2_{x, t\in[0,T]}} \leq T^{1/4} \|u   \|_{L^8_{ t\in[0,T]} L^4_x } \|v\|_{L^8_{ t\in[0,T]} L^4_x }  \lesssim  T^{1/4} \|u\|_{U^8_\Delta} \|v\|_{U^8_\Delta}.
$$
Applying the interpolation inequality in \cite{HaHeKo2009}, there exists $\epsilon >0$ such that for any $M>0$, there exists a decomposition $u=u_1+u_2$ with $u_1 \in U^2_\Delta$,  $u_2 \in U^8_\Delta$ verifying
$$
\frac{1}{M}\|u_1\|_{U^2_\Delta} +  e^{\epsilon M} \|u_2\|_{U^8_\Delta} \lesssim \|u \|_{V^2_\Delta}
$$
and similar decomposition holds for $v$. Hence, it follows from the bilinear and interpolation inequalities that
\begin{align*}
\|u {v} \|_{L^2_{x, t\in[0,T]}} & \leq  \|u_1 v_1\|_{L^2_{x, t\in[0,T]}} + \|u_1v_2 + u_2 v_1\|_{L^2_{x, t\in[0,T]}} + \|u_2 v_2\|_{L^2_{x, t\in[0,T]}} \\
 & \leq  \lambda^{-1/2}  \|u_1 \|_{U^2_\Delta}  \|v_1 \|_{U^2_\Delta}  + T^{1/4} \sum_{(i, j)\neq (1,1)} \|u_i\|_{U^8_\Delta } \| v_ j \|_{U^8_\Delta } \\
  & \lesssim  (M^2 \lambda^{-1/2}    + 2 T^{1/4} M e^{-\epsilon M} + T^{1/4}   e^{- 2\epsilon M} )  \|u \|_{V^2_\Delta } \| v \|_{V^2_\Delta }.
\end{align*}
Taking $M = \epsilon^{-1} \ln \langle\lambda \rangle$, we obtain that for $T>1$,
\begin{align*}
\|u {v} \|_{L^2_{x, t\in[0,T]}}
  & \lesssim    T^{1/4}  \lambda^{-1/2} \ln^2 \langle\lambda \rangle     \|u \|_{V^2_\Delta } \| v \|_{V^2_\Delta }.
\end{align*}
If $0<T<1$, one can find the proof in \cite{Gu2017}.
\end{proof}

	\begin{prop}[Linear estimate]\label{linearpart}
Let $s,\sigma \in \mathbb{R}$. Then we have
		\begin{equation}
			\|\chi_{[0,\infty)}(t)e^{it\Delta} u_0\|_{X^s_{\sigma,\Delta} }\sim \|u_0\|_{E^s_\sigma}.
		\end{equation}
	\end{prop}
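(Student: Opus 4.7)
The plan is to reduce the estimate to a simple statement about $U^2$ by using the multiplier structure, and then recognize $\chi_{[0,\infty)}(t) f_0$ as essentially a $U^2$-atom.

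First I would observe that the Fourier multiplier $\langle\nabla\rangle^\sigma 2^{s|\nabla|}$ commutes with $e^{\pm it\Delta}$. Unpacking the definition of $X^s_{\sigma,\Delta}$, and setting $f_0 := \langle\nabla\rangle^\sigma 2^{s|\nabla|} u_0$, I get
\begin{align*}
 \|\chi_{[0,\infty)}(t)e^{it\Delta} u_0\|_{X^s_{\sigma,\Delta}}
 & = \bigl\|e^{-it\Delta}\bigl(\chi_{[0,\infty)}(t) e^{it\Delta} \langle\nabla\rangle^\sigma 2^{s|\nabla|} u_0 \bigr)\bigr\|_{U^2} \\
 & = \bigl\|\chi_{[0,\infty)}(t) f_0\bigr\|_{U^2},
\end{align*}
while $\|u_0\|_{E^s_\sigma} = \|f_0\|_{L^2}$. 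So the claim reduces to $\|\chi_{[0,\infty)}(t) f_0\|_{U^2} \sim \|f_0\|_{L^2}$ for arbitrary $f_0 \in L^2$.

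For the upper bound, I would just observe that $\chi_{[0,\infty)}(t) f_0$ is, up to the normalizing scalar $\|f_0\|_{L^2}$, a $U^2$-atom: take the partition $-\infty = t_0 < t_1 = 0 < t_2 = \infty$ with $\phi_0 = 0$ and $\phi_1 = f_0/\|f_0\|_{L^2}$; then $\sum \|\phi_k\|_{L^2}^2 = 1$, so by definition of the atomic norm,
\[
 \|\chi_{[0,\infty)}(t) f_0\|_{U^2} \leq \|f_0\|_{L^2}.
\]
For the reverse inequality I would invoke the continuous embedding $U^2 \hookrightarrow L^\infty(\mathbb{R}, L^2)$ from Proposition \ref{basicproperty}(ii), which yields
\[
 \|f_0\|_{L^2} = \sup_{t>0} \|\chi_{[0,\infty)}(t) f_0\|_{L^2} \lesssim \|\chi_{[0,\infty)}(t) f_0\|_{U^2}.
\]

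There is no real obstacle here: the whole statement is essentially a tautology after one recognizes the isometry $e^{-it\Delta}$ inside the $U^2_\Delta$-norm and writes down the trivial atomic decomposition. The only thing worth being careful about is the commutation of the Fourier multiplier with the evaluation-in-$t$ and with $e^{it\Delta}$, and the fact that the characteristic function $\chi_{[0,\infty)}(t)$ is right-continuous so that the step function indeed lives in $U^2$ with atomic norm exactly $\|f_0\|_{L^2}$.
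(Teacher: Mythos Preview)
Your argument is correct. The paper states Proposition~\ref{linearpart} without proof, treating it as a routine consequence of the definitions; your write-up is exactly the standard verification one would supply, reducing via the commuting Fourier multipliers to the identity $\|\chi_{[0,\infty)}(t)f_0\|_{U^2}\sim\|f_0\|_{L^2}$ and reading off both directions from the atomic definition and the embedding $U^2\subset L^\infty_t L^2_x$.
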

	
	\subsection{Trilinear estimate }

For the sake of convenience, we write for any $M,N>0$,
\begin{align*}
& P_{< N}f = \mathscr{F}^{-1} \chi_{[0, N)}\mathscr{F} f , \ \ P_{\geq N}f = \mathscr{F}^{-1} \chi_{[N, \infty)}\mathscr{F} f,\\
& P_N f  = \mathscr{F}^{-1} \chi_{[N/2, N)}\mathscr{F} f , \ P_0 = P_{<1},  \ \   P_{M\leq \cdot < N}f  = \mathscr{F}^{-1} \chi_{[M, N)}\mathscr{F}f.
\end{align*}
For any $1\leq p\leq q \leq \infty$, the following Bernstein's estimates are well-known,
$$
\|P_{< N}f\|_{q} \lesssim N^{d(1/p-1/q)} \|P_{< N}f\|_{p}, \ \  \|P_{N}f\|_{q} \lesssim  N^{d(1/p-1/q)} \|P_{N}f\|_{p}.
$$

	\begin{prop}[Multi-linear estimate]\label{nonlinearpart}
	Let $T>0$, $s\leq 0, \, \sigma >-1/2$. Suppose that $u_j\in X^s_{\sigma,\Delta}$, $u_j(t) = 0 $ for $t<0$ or $t\geq T$, and ${\rm supp}\,\widehat{u_j(t)} \subset [0,\infty)$ for j=1,2,3, then we have for some $\varepsilon>0$,
	\begin{align*}
		\left\|\int_0^t e^{i(t-\tau)\Delta}(u_1u_2u_3^*)(\tau)d\tau\right\|_{X^s_{\sigma,\Delta}}\lesssim (T^\varepsilon +T) \|u_1\|_{X^s_{\sigma,\Delta}}\|u_2\|_{X^s_{\sigma,\Delta}}\|u_3\|_{X^s_{\sigma,\Delta}}.
	\end{align*}
	\end{prop}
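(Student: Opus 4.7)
The plan is a duality reduction followed by dyadic decomposition, with the crucial algebraic observation that the positivity of the Fourier supports makes the exponential weight $2^{s|\xi|}$ factor exactly. By the duality identity in the Remark following Definition~\ref{workingspace}, it suffices to bound
$$I:=\int_0^T\!\!\int_{\mathbb R} u_1 u_2 u_3^*\,\overline{v}\,dx\,dt$$
uniformly in $\|v\|_{Y^{-s}_{-\sigma,\Delta}}\le 1$, and since $\widehat{u_1 u_2 u_3^*}\subset[0,\infty)$ I may further restrict to $\widehat v\subset[0,\infty)$ at no cost. Dyadically decompose $u_j=\sum_{N_j} P_{N_j}u_j$ and $v=\sum_{N_4}P_{N_4}v$. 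Since $\widehat{u_3^*}(\xi)=\overline{\widehat{u_3}(\xi)}$ also lies in $[0,\infty)$, every output frequency $\xi_4=\xi_1+\xi_2+\xi_3$ has $\xi_j\ge 0$, hence $|\xi_4|=\sum_j|\xi_j|$ and
$$2^{s|\xi_4|}=\prod_{j=1}^3 2^{s|\xi_j|}.$$
This makes the exponential weights on the $u_j$ (through $X^s_{\sigma,\Delta}$) and on $v$ (through $Y^{-s}_{-\sigma,\Delta}$) cancel exactly tile-by-tile, leaving only the polynomial weights $\langle\xi\rangle^\sigma$. Ordering $N_1\ge N_2\ge N_3$, one has $N_4\sim N_1$, and the residual weight is $\langle N_4\rangle^\sigma\prod_j\langle N_j\rangle^{-\sigma}\lesssim\langle N_2\rangle^{-\sigma}\langle N_3\rangle^{-\sigma}$ for $\sigma\le 0$ (and simply bounded for $\sigma\ge 0$).

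To produce the decay needed for summability, I apply the Bilinear Estimate (Lemma~\ref{Bilinear}) to two well-separated pairs, using $u_3^*=\overline{\tilde u_3}$ with $\tilde u_3(x,t)=u_3(-x,t)$: since reflection commutes with $e^{it\Delta}$, $\|\tilde u_3\|_{V^2_\Delta}=\|u_3\|_{V^2_\Delta}$, while $\widehat{\tilde u_3}\subset(-\infty,0]$. The pair $(P_{N_1}u_1,\,P_{N_3}\tilde u_3)$ has Fourier separation $\gtrsim N_1$, yielding
$$\|P_{N_1}u_1\cdot P_{N_3}u_3^*\|_{L^2_{x,t\in[0,T]}}\lesssim (T^{\varepsilon/4}+T^{1/4})\,N_1^{-1/2+\varepsilon}\|P_{N_1}u_1\|_{V^2_\Delta}\|P_{N_3}u_3\|_{V^2_\Delta},$$
and a symmetric pairing of $P_{N_2}u_2$ with $P_{N_4}\overline v$ (whose separation is $\gtrsim N_1-N_2+N_3$), combined with Cauchy--Schwarz on the two $L^2_{x,t}$ norms, yields a two-fold bilinear gain on $|I_{N_1,N_2,N_3,N_4}|$. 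For $N_1\gg N_2$ the gains combine to $N_1^{-1+2\varepsilon}$, which trivially dominates the polynomial weight. For $N_1\sim N_2$ they give a summand $\sim N_1^{-1/2+\varepsilon}N_3^{-1/2+\varepsilon}\langle N_2\rangle^{-\sigma}\langle N_3\rangle^{-\sigma}$; convergence of $\sum_{N_3}N_3^{-1/2-\sigma+\varepsilon}$ requires precisely $\sigma>-1/2$. The outer $N_1$ summation then uses almost-orthogonality of Littlewood--Paley pieces in $V^2_\Delta$ together with Cauchy--Schwarz.

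The main obstacle is the all-high case $N_1\sim N_2\sim N_3\sim N$, where no frequency pair is separated and Lemma~\ref{Bilinear} gives no gain. Here I would exploit the resonance identity
$$(\tau_4+\xi_4^2)-(\tau_1+\xi_1^2)-(\tau_2+\xi_2^2)+(\tau_3+\xi_3^2)=2(\xi_1+\xi_3)(\xi_2+\xi_3)\gtrsim N^2,$$
valid because all $\xi_j\ge 0$: at least one of the four factors must have dispersion modulation $\gtrsim N^2$, and the Dispersion Modulation Decay lemma \eqref{dispersiondecay} then contributes an $N^{-1}$ gain, which makes $\sum_N N^{-1}\langle N\rangle^{-2\sigma}$ converge exactly for $\sigma>-1/2$. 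The time factor $T$ (as opposed to $T^\varepsilon$) on the right-hand side arises from the $(T^{\varepsilon/4}+T^{1/4})$ prefactor of Lemma~\ref{Bilinear} when iterated, or alternatively from a $T^{1/2}$ loss incurred when falling back on Strichartz in $L^4_t L^\infty_x$ with time-cutoff H\"older to handle degenerate sub-cases. In summary, the exponential weight cancellation is the key algebraic ingredient, Lemma~\ref{Bilinear} handles all cases except all-high, and the dispersion modulation bound via the resonance identity closes the all-high case; $\sigma>-1/2$ enters sharply in both the $N_1\sim N_2$ regime and the all-high regime.
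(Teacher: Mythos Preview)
Your overall strategy---duality, the exponential-weight factorization $2^{s|\xi_4|}=\prod_j 2^{s|\xi_j|}$ from positivity of the supports, the resonance identity $2(\xi_1+\xi_3)(\xi_2+\xi_3)$, and the combination of Lemma~\ref{Bilinear} with the dispersion-modulation decay \eqref{dispersiondecay}---matches the paper's approach closely. However, your case analysis has a gap.

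The claimed separation ``$\gtrsim N_1-N_2+N_3$'' for the pair $(P_{N_2}u_2,\,P_{N_4}v)$ is not justified at the level of dyadic intervals. Take $N_1=N_2=N$ and $N_3\ll N$: then $\xi_4=\xi_1+\xi_2+\xi_3\in[N+N_3/2,\,2N+N_3)$, so $N_4=2N$ is allowed, and the supports $[N/2,N)$ and $[N,2N)$ of $\widehat{u_2}$ and $\widehat v$ are \emph{adjacent}, with zero interval separation. The pointwise constraint $\xi_4-\xi_2=\xi_1+\xi_3$ is lost once you apply Cauchy--Schwarz to separate $\|u_1u_3^*\|_{L^2}$ from $\|u_2\bar v\|_{L^2}$, so it cannot be invoked in Lemma~\ref{Bilinear}. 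In this regime your scheme therefore yields only a single factor $N^{-1/2+\varepsilon}$ (from $(u_1,u_3^*)$), which with the residual weight $\langle N\rangle^{-2\sigma}$ sums only for $\sigma>-1/4$, not for the full range $\sigma>-1/2$.

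The fix---and this is exactly the paper's decomposition---is to notice that the resonance lower bound $(\xi_1+\xi_3)(\xi_2+\xi_3)\ge \xi_1\xi_2\gtrsim N^2$ holds whenever \emph{any two} of the three input frequencies are $\sim N$, not only in the all-high case. The paper accordingly applies the modulation argument (your all-high treatment, placing the high-modulation factor in $L^2_{x,t}$ via \eqref{dispersiondecay} and the other two in $L^4_tL^\infty_x$ via Strichartz) in the cases $\mathcal L_1$ (where $u_3$ is high) and $\mathcal L_{221}$ (where $u_3$ is low but $u_1,u_2$ are both high). The double bilinear estimate is reserved for the cases $\mathcal L_{21},\mathcal L_{222}$ where at least two inputs are genuinely $\ll N$; there the paper pairs $(u_{\rm high},u_{\rm low})$ and $(u_3(-\cdot,\cdot),v)$, and both pairs have separation $\gtrsim N$ at the interval level.

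A minor inaccuracy: in your all-high case the pair $(u_1,u_3^*)$ \emph{is} separated by $\gtrsim N$ (since $\widehat{\tilde u_3}\subset(-\infty,0]$), so the statement ``no frequency pair is separated'' is wrong; the obstruction to a purely bilinear argument there is again the $(u_2,v)$ pair.
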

	\begin{proof}[\textbf{Proof}]
	By duality, we only need to show
	\begin{align*}
		\left|\int_{\mathbb{R}^2}u_1u_2u_3^* \bar{v} dxdt\right|\lesssim (T^\varepsilon +T) \|u_1\|_{X^s_{\sigma,\Delta}}\|u_2\|_{X^s_{\sigma,\Delta}}\|u_3\|_{X^s_{\sigma,\Delta}} \|v\|_{Y^{-s}_{-\sigma,\Delta}}.
	\end{align*}
The proof for the cases $T\leq 1$ and $T>1$ are quite similar and we only consider the details for the case $T\leq 1$.	By density argument, we can assume that
	$\mathscr{F}_x u_j(t,\xi), j=1,2,3, \mathscr{F}_x v(t,\xi)$ have compact support. By Parserval's identity and $\widehat{uv} =\widehat{u}* \widehat{v}$, we have
	\begin{align*}
		\mathcal{L} &: =  \int_{\mathbb{R}^2} u_1u_2u^*_3 \bar{v} dxdt\\
		& =  \int_{[0,\infty)^3\times\mathbb{R}^3} \widehat{u}_1(\xi_1,\tau_1) \widehat{u}_2(\xi_2,\tau_2) \overline{\widehat{u}}_3 (\xi-\xi_1-\xi_2, \tau_1+\tau_2 - \tau) \overline{\widehat{ v}} (\xi, \tau)   d\xi_1d\xi_2d\xi d\tau_1d\tau_2d\tau.
	\end{align*}
If there is no confusion, $ [0,\infty)^3\times\mathbb{R}^3$ and $d\xi_1d\xi_2d\xi d\tau_1d\tau_2d\tau$ will be omitted in the expression of $\mathcal{L}$. Noticing that ${\rm supp} \mathscr{F}_x{u}_j, \mathscr{F}_x{v} \subset [0,\infty)$ we see that
	\begin{align*}
		\mathcal{L}
		 = &   \int  \widehat{P_{< 1}u}_1(\xi_1,\tau_1) \widehat{P_{< 1} u}_2(\xi_2,\tau_2) \overline{\widehat{P_{< 1} u}}_3 (\xi-\xi_1-\xi_2, \tau_1+\tau_2 - \tau) \overline{\widehat{P_{\leq 1} v}} (\xi, \tau) \\
  & \  +\sum_{N\in {  2^{\mathbb{N}}}}\int  \widehat{P_{< N}u}_1(\xi_1,\tau_1) \widehat{P_{< N } u}_2(\xi_2,\tau_2) \overline{\widehat{P_{< N } u}}_3 (\xi-\xi_1-\xi_2, \tau_1+\tau_2 - \tau) \overline{\widehat{P_N v}} (\xi, \tau)\\
   : = & \mathcal{L}_{lo} + \mathcal{L}_{hi}.
	\end{align*}
The estimate of $\mathcal{L}_{lo}$ is very easy. By H\"older's and Bernstein's inequalities,
\begin{align}
|\mathcal{L}_{lo}| & \lesssim T  \prod^3_{i=1}\|P_{< 1}u_i\|_{L^\infty_tL^2_x} \|P_{< 1 }v \|_{L^\infty_tL^2_x} \nonumber\\
& \lesssim T  \prod^3_{i=1}\|2^{s|\nabla|}\langle\nabla\rangle^\sigma P_{< 1 }u_i\|_{L^\infty_tL^2_x} \|2^{-s|\nabla|}\langle\nabla\rangle^{-\sigma} P_{< 1 }v \|_{L^\infty_tL^2_x}  \nonumber \\
&\lesssim T     \prod^3_{i=1}\|  u_i\|_{X^s_{\sigma, \Delta, T}}     \| v\|_{Y^{-s}_{-\sigma, \Delta, T} }. \label{Llow}
\end{align}
Now we estimate $\mathcal{L}_{hi}$.
Taking $\xi_3= \xi-\xi_1-\xi_2$ and $\tau_3 = \tau_1+\tau_2 - \tau$ in $\mathcal{L}_{hi}$,  we see that
\begin{align} \label{highestmodulation}
  \sum_{i=1,2,3} |\xi^2_i +\tau_i| + |\xi^2+\tau| \geq  2 (\xi_1+\xi_3)(\xi_2+\xi_3).
\end{align}
According to the lower bound of the highest modulation, one can decompose
	\begin{align*}
		\mathcal{L}_{hi}
		  = & \sum_{N\in {  2^{\mathbb{N}}}}\int  \widehat{P_{< N }u}_1(\xi_1,\tau_1) \widehat{P_{< N } u}_2(\xi_2,\tau_2) \overline{\widehat{P_{2^{-3}N\leq \cdot < N } u}}_3 (\xi_3, \tau_3) \overline{\widehat{P_N v}} (\xi, \tau)  \\
& + \sum_{N\in {  2^{\mathbb{N}}}}\int  \widehat{P_{< N }u}_1(\xi_1,\tau_1) \widehat{P_{< N } u}_2(\xi_2,\tau_2) \overline{\widehat{P_{\leq  2^{-3}N} u}}_3 (\xi_3, \tau_3) \overline{\widehat{P_N v}} (\xi, \tau)\\
: = & \mathcal{L}_1 +\mathcal{L}_2.
	\end{align*}

{\it Step 1.} We  estimate $\mathcal{L}_1$. By \eqref{highestmodulation}, the highest modulation of  ${P_{< N }u}_1$, ${P_{< N } u}_2$, $P_{2^{-3}N\leq \cdot < N } u_3$,  ${P_N v} $ satisfies
\begin{align} \label{highestmodulation1}
  \sum_{i=1,2,3} |\xi^2_i +\tau_i| + |\xi^2+\tau| \gtrsim N^2 .
\end{align}
First, we consider the case that ${P_{< N }u}_1$ has the highest modulation, i.e., $|\xi^2_1 +\tau_1|\geq N^2$. Noticing that $\xi=\xi_1+\xi_2+\xi_3$, we can rewrite $\mathcal{L}_1$ as
\begin{align*}
		\mathcal{L}_1
		  =   \sum_{N\in {  2^{\mathbb{N}}}}\int  2^{s \xi_1}\widehat{P_{< N }u}_1(\xi_1,\tau_1)  2^{s \xi_2}\widehat{P_{< N } u}_2(\xi_2,\tau_2)  2^{s \xi_3}\overline{\widehat{P_{2^{-3}N\leq \cdot < N } u}}_3 (\xi_3, \tau_3)  2^{-s \xi}\overline{\widehat{P_N v}} (\xi, \tau).
\end{align*}
By H\"older's inequality, we have
\begin{align}
|\mathcal{L}_1|
		  & \leq    \sum_{N\in { 2^{\mathbb{N}}}}  \|2^{s |\nabla|} P_{< N }u_1\|_{L^2_{x,t}}  \|2^{s |\nabla|} P_{< N } u_2\|_{L^4_tL^\infty_x}  \nonumber\\
&  \ \ \ \ \ \ \ \ \ \  \times \| 2^{s|\nabla|}   P_{2^{-3}N\leq \cdot < N } u_3 \|_{L^4_tL^\infty_x}  \|2^{-s |\nabla|} P_N v \|_{L^\infty_tL^2_x}. \label{L1est1}
\end{align}
Notice that $\sigma \in (-1/2, 0]$.  Using Bernstein's inequality, Proposition \ref{strichartz} and $U^2_\Delta \subset U^{4+}_\Delta $, for any $u $ with $u(t)=0$ for $t>T$, $0<\varepsilon \ll 1$,  one has that
\begin{align}
 \| P_{< N } u \|_{L^4_tL^\infty_x}
 & \lesssim  N^\varepsilon \| P_{< N } u\|_{L^4_t L^{1/\varepsilon}_x}   \nonumber\\
 & \lesssim  T^{\varepsilon/2}   N ^{\varepsilon-\sigma} \| \langle\nabla\rangle^{\sigma} P_{< N } u\|_{L^{4/(1-2\varepsilon)}_t L^{1/\varepsilon}_x} \nonumber \\
 & \lesssim   T^{\varepsilon/2}  N^{\varepsilon-\sigma} \| \langle\nabla\rangle^{\sigma}  u\|_{U^{4/(1-2\varepsilon)} _\Delta}\nonumber\\    & \lesssim   T^{\varepsilon/2} N^{\varepsilon-\sigma} \| \langle\nabla\rangle^{\sigma}  u\|_{U^2_\Delta}.
 \label{L1est2}
\end{align}
In view of the dispersion modulation decay estimates \eqref{dispersiondecay} and \eqref{highestmodulation1},
\begin{align}
  \|2^{s |\nabla|} P_{< N }u_1\|_{L^2_{x,t}}   \leq  N^{-1-\sigma} \|2^{s |\nabla|} \langle\nabla\rangle^\sigma  u_1\|_{V^2_{\Delta}}.  \label{L1est3}
\end{align}
Inserting the estimates of \eqref{L1est2} and \eqref{L1est3} into \eqref{L1est1}, we have
\begin{align}
|\mathcal{L}_1|
		  & \leq    \sum_{N\in {\{0\}\cup 2^{\mathbb{N}}}} T^\varepsilon  N^{-1+2\varepsilon-2\sigma} \|2^{s |\nabla|}\langle\nabla\rangle^\sigma u_1\|_{V^2_{\Delta}}   \|2^{s |\nabla|} \langle\nabla\rangle^\sigma u_2\|_{U^2_\Delta}  \nonumber\\
&  \ \ \ \ \ \ \ \ \ \  \times \| 2^{s|\nabla|} \langle\nabla\rangle^\sigma   u_3 \|_{U^2_\Delta}  \|2^{-s |\nabla|} \langle\nabla\rangle^{-\sigma} P_N v \|_{L^\infty_tL^2_x}. \label{L1est4}
\end{align}
 Take  $\varepsilon>0$ verifying $-1+ 2\varepsilon -2\sigma <0 $. Using the embedding $U^2\subset V^2_{-,rc} \subset L^\infty_tL^2_x$, it follows from \eqref{L1est4} that
 \begin{align}
|\mathcal{L}_1|
		  & \leq T^\varepsilon    \|2^{s |\nabla|}\langle\nabla\rangle^\sigma u_1\|_{V^2_{\Delta}}   \|2^{s |\nabla|} \langle\nabla\rangle^\sigma u_2\|_{U^2_\Delta}
  \| 2^{s|\nabla|} \langle\nabla\rangle^\sigma   u_3 \|_{U^2_\Delta}  \|2^{-s |\nabla|} \langle\nabla\rangle^{-\sigma}  v \|_{V^2_\Delta} \nonumber\\
  & \leq T^\varepsilon    \prod^3_{i=1}\|  u_i\|_{X^s_{\sigma, \Delta, T}}     \| v\|_{Y^{-s}_{-\sigma, \Delta, T} } \label{L1est5}
\end{align}
Next, if $ P_{ < N } u_2$ or $ P_{2^{-3}N\leq \cdot < N } u_3$ has the highest modulation, we can use the same way as in the above  to obtain \eqref{L1est5}.

Thirdly, we assume that $P_N v$ has the highest modulation. In order to use the highest modulation of  $P_N v$, $2^{-s|\nabla|} P_N v$ should be equipped with $L^2_{x,t}$ norm. Noticing that
$$
\|2^{-s|\nabla|} P_N v\|_{L^2_{x,t}} \sim N^{\sigma} \|2^{-s|\nabla|}\langle \nabla\rangle^{-\sigma} P_N v\|_{L^2_{x,t}},
$$
then we can repeat the procedures as in the above argument to get  \eqref{L1est5}.

{\it Step 2.} We consider the estimate of $\mathcal{L}_2$. Let us decompose $\mathcal{L}_2$ by
	\begin{align*}
\mathcal{L}_{2} = &  \sum_{N\in {  2^{\mathbb{N}}}}\int  \widehat{P_{< N }u}_1(\xi_1,\tau_1) \widehat{P_{\leq 2^{-3}N} u}_2(\xi_2,\tau_2) \overline{\widehat{P_{\leq  2^{-3}N} u}}_3 (\xi_3, \tau_3) \overline{\widehat{P_N v}} (\xi, \tau)\\
&+  \sum_{N\in {  2^{\mathbb{N}}}}\int  \widehat{P_{< N }u}_1(\xi_1,\tau_1) \widehat{P_{2^{-3} N\leq \cdot< N } u}_2(\xi_2,\tau_2) \overline{\widehat{P_{\leq  2^{-3}N} u}}_3 (\xi_3, \tau_3) \overline{\widehat{P_N v}} (\xi, \tau)\\
: = & \mathcal{L}_{21} +\mathcal{L}_{22}.
	\end{align*}
In $\mathcal{L}_{21}$,  $\xi =\xi_1+\xi_2+ \xi_3$ implies that $\xi_1\geq 2^{-2}N$, so one can rewrite $\mathcal{L}_{21}$ as
\begin{align*}
\mathcal{L}_{21} = &  \sum_{N\in {  2^{\mathbb{N}}}}\int  \widehat{P_{2^{-2}N\leq \cdot < N }u}_1(\xi_1,\tau_1) \widehat{P_{\leq 2^{-3}N} u}_2(\xi_2,\tau_2) \overline{\widehat{P_{\leq  2^{-3}N} u}}_3 (\xi_3, \tau_3) \overline{\widehat{P_N v}} (\xi, \tau)\\
= &  \sum_{N\in {  2^{\mathbb{N}}}}\int  2^{s\xi_1}\widehat{P_{2^{-2}N\leq \cdot < N }u}_1(\xi_1,\tau_1)  2^{s\xi_2}\widehat{P_{\leq 2^{-3}N} u}_2(\xi_2,\tau_2) 2^{s\xi_3} \overline{\widehat{P_{\leq  2^{-3}N} u}}_3 (\xi_3, \tau_3)  2^{-s\xi }\overline{\widehat{P_N v}} (\xi, \tau).
	\end{align*}
Applying H\"older's inequality and the bilinear estimate Lemma \ref{Bilinear}, we have for $-1-2\sigma + 2\varepsilon <0$,
\begin{align*}
|\mathcal{L}_{21}|  &  \leq   \sum_{N\in    2^{\mathbb{N} }}  \| 2^{s|\nabla|} P_{2^{-2}N\leq \cdot < N }u_1   2^{s|\nabla|}  P_{\leq 2^{-3}N} u_2 \|_{L^2_{x,t}} \| 2^{s|\nabla|}   P_{\leq  2^{-3}N} u_3 (-\cdot,\cdot)   2^{-s|\nabla|}  P_N v \|_{L^2_{x,t}} \\
 &  \leq  T^{\varepsilon/2} \sum_{N\in    2^{\mathbb{N} }} N^{-1+2\varepsilon}  \| 2^{s|\nabla|} P_{2^{-2}N\leq \cdot < N }u_1 \|_{V^2_\Delta}  \|2^{s|\nabla|}  P_{\leq 2^{-3}N} u_2 \|_{V^2_\Delta} \\
  & \ \ \ \  \times \| 2^{s|\nabla|}   P_{\leq  2^{-3}N} u_3 (-\cdot,\cdot)\|_{V^2_\Delta}  \| 2^{-s|\nabla|}  P_N v \|_{V^2_\Delta}\\
   &  \leq  T^{\varepsilon/2} \sum_{N\in    2^{\mathbb{N} }} N^{-1+2\varepsilon-2\sigma }  \| 2^{s|\nabla|}\langle \nabla\rangle^{\sigma}  u_1 \|_{V^2_\Delta}  \|2^{s|\nabla|} \langle \nabla\rangle^{\sigma}   u_2 \|_{V^2_\Delta} \\
  & \ \ \ \  \times \| 2^{s|\nabla|} \langle \nabla\rangle^{\sigma}    u_3 (-\cdot,\cdot)\|_{V^2_\Delta}  \| 2^{-s|\nabla|} \langle \nabla\rangle^{-\sigma}   v \|_{V^2_\Delta} \\
  &  \leq  T^{\varepsilon}
   \prod^3_{i=1}\|  u_i\|_{X^s_{\sigma, \Delta, T}}     \| v\|_{Y^{-s}_{-\sigma, \Delta, T} }.
	\end{align*}
For the estimate of $\mathcal{L}_{22}$, one can rewrite it as
\begin{align*}
\mathcal{L}_{22} = & \sum_{N\in {  2^{\mathbb{N}}}}\int  \widehat{P_{2^{-4}N \leq \cdot < N }u}_1(\xi_1,\tau_1) \widehat{P_{2^{-3} N\leq \cdot< N } u}_2(\xi_2,\tau_2) \overline{\widehat{P_{\leq  2^{-3}N} u}}_3 (\xi_3, \tau_3) \overline{\widehat{P_N v}} (\xi, \tau) \\
& + \sum_{N\in {  2^{\mathbb{N}}}}\int  \widehat{P_{ \leq 2^{-4}N }u}_1(\xi_1,\tau_1) \widehat{P_{2^{-3} N\leq \cdot< N } u}_2(\xi_2,\tau_2) \overline{\widehat{P_{\leq  2^{-3}N} u}}_3 (\xi_3, \tau_3) \overline{\widehat{P_N v}} (\xi, \tau)\\
:=& \mathcal{L}_{221}  +  \mathcal{L}_{222}.
\end{align*}
In $\mathcal{L}_{221}$, the highest modulation of $  P_{2^{-4}N \leq \cdot < N }u_1$, $ P_{2^{-3} N\leq \cdot< N } u_2$,
$ P_{\leq  2^{-3}N} u_3$ and $ P_N v$ satisfies
$$
\sum_{i=1,2,3} |\xi^2_i +\tau_i| + |\xi^2+\tau| \geq \xi_1\xi_2  \gtrsim N^2 .
$$
So, we can repeat the procedure as in Step 1 to obtain that
$$
|\mathcal{L}_{221}| \lesssim  T^{\varepsilon}
   \prod^3_{i=1}\|  u_i\|_{X^s_{\sigma, \Delta, T}}     \| v\|_{Y^{-s}_{-\sigma, \Delta, T} }.
$$
Noticing that $\mathcal{L}_{222}$ has similar structure and we can use the bilinear estimate Lemma \ref{Bilinear} to obtain that
$$
|\mathcal{L}_{222}| \lesssim  T^{\varepsilon}
   \prod^3_{i=1}\|  u_i\|_{X^s_{\sigma, \Delta, T}}     \| v\|_{Y^{-s}_{-\sigma, \Delta, T} }.
$$
So, we obtain that
\begin{align} \label{L2est}
|\mathcal{L}_{22}| \leq |\mathcal{L}_{221}| + |\mathcal{L}_{222}| \lesssim  T^{\varepsilon}
   \prod^3_{i=1}\|  u_i\|_{X^s_{\sigma, \Delta, T}}     \| v\|_{Y^{-s}_{-\sigma, \Delta, T} }.
\end{align}
Summarizing the estimates of $\mathcal{L}_{21}$ and $\mathcal{L}_{22}$, we have
\begin{align} \label{L2est1}
|\mathcal{L}_{2}|    \lesssim  T^{\varepsilon}
   \prod^3_{i=1}\|  u_i\|_{X^s_{\sigma, \Delta, T}}     \| v\|_{Y^{-s}_{-\sigma, \Delta, T} }.
\end{align}
By \eqref{Llow}, \eqref{L1est5}, \eqref{L2est1} and duality, we have the result for the case $T< 1 $, as desired. If $T>1$, we need to use $T^{1/4} \sim (T^{\varepsilon /4}+T^{1/4})$  in Lemma \ref{Bilinear} to substitute  $T^{\varepsilon /4 }$ in the above arguments and the details are omitted.
	\end{proof}

\subsection{Scalings}

\begin{lemma} \label{scaling}
		Assume $s\leq 0$, $\varphi \in E^s_\sigma (\mathbb{R}^d)$.  Write  $ D_\lambda: \, \varphi \to \varphi_\lambda (\cdot) = \varphi(\lambda\cdot)$. Assume that ${\rm supp }\, \widehat{\varphi} \subset \{\xi: |\xi|\geq \varepsilon_0\}$ for some $\varepsilon_0>0$. Then  for any $\lambda>1$, we have
\begin{align*}
& 			\|D_\lambda \varphi \|_{E^s_\sigma (\mathbb{R}^d) } \lesssim \lambda^{-d/2} 2^{s \lambda \varepsilon_0/2} \|\varphi \|_{E^{s}_\sigma (\mathbb{R}^d) }, \ \ \sigma \leq 0 ; \\
& 			\|D_\lambda \varphi \|_{E^s_\sigma (\mathbb{R}^d) } \lesssim \lambda^{-d/2+\sigma} 2^{s \lambda \varepsilon_0/2} \|\varphi \|_{E^{s}_\sigma (\mathbb{R}^d) }, \ \ \sigma > 0.
\end{align*}
For any $\mu<1$, we have
\begin{align*}
& \|D_\mu \varphi \|_{E^{s/\mu}_\sigma (\mathbb{R}^d) } \lesssim \mu^{-d/2+\sigma}  \|\varphi \|_{E^{s}_\sigma (\mathbb{R}^d) }, \ \ \sigma \leq 0; \\
& 	\|D_\mu \varphi \|_{E^{s/\mu}_\sigma (\mathbb{R}^d) } \lesssim \mu^{-d/2} \|\varphi \|_{E^{s}_\sigma (\mathbb{R}^d) }, \ \ \sigma > 0.
\end{align*}
\end{lemma}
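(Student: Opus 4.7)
The plan is to push everything through the change of variables $\eta=\xi/\lambda$ (respectively $\eta=\xi/\mu$) in the integral defining the $E^s_\sigma$-norm. First I would apply the Fourier scaling $\widehat{D_\lambda\varphi}(\xi) = \lambda^{-d}\widehat{\varphi}(\xi/\lambda)$ to write
\begin{equation*}
\|D_\lambda\varphi\|_{E^s_\sigma}^2 = \lambda^{-d}\int \langle\lambda\eta\rangle^{2\sigma}\, 2^{2s\lambda|\eta|}\,|\widehat{\varphi}(\eta)|^2\,d\eta,
\end{equation*}
so that the whole problem reduces to two pointwise comparisons on the frequency support $\{|\eta|\geq\varepsilon_0\}$: $\langle\lambda\eta\rangle^{2\sigma}$ versus $\langle\eta\rangle^{2\sigma}$, and $2^{2s\lambda|\eta|}$ versus $2^{2s|\eta|}$.

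The two subcases of the dilation statement come from the polynomial factor. For $\lambda>1$ and $\sigma\leq 0$, monotonicity gives $\langle\lambda\eta\rangle^{2\sigma}\leq\langle\eta\rangle^{2\sigma}$ directly; for $\sigma>0$, the crude bound $\langle\lambda\eta\rangle\leq\lambda\langle\eta\rangle$ contributes the extra $\lambda^{2\sigma}$ that yields the $\sigma$-shift in the exponent. The main content is the exponential bound. I would exploit the algebraic identity
\begin{equation*}
2s\lambda|\eta| = s\lambda\varepsilon_0 + 2s|\eta| + s\bigl(2(\lambda-1)|\eta|-\lambda\varepsilon_0\bigr),
\end{equation*}
and use $|\eta|\geq\varepsilon_0$, $s\leq 0$, $\lambda>1$ to bound the bracketed remainder by $2(\lambda-1)\varepsilon_0-\lambda\varepsilon_0=(\lambda-2)\varepsilon_0\geq-\varepsilon_0$, so that the whole last summand is at most $-s\varepsilon_0$. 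Exponentiating yields $2^{2s\lambda|\eta|}\lesssim 2^{s\lambda\varepsilon_0}\cdot 2^{2s|\eta|}$ with a constant depending only on $s$ and $\varepsilon_0$, and substituting back and taking a square root produces the advertised factor $2^{s\lambda\varepsilon_0/2}$.

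The contraction case $\mu<1$ is essentially simpler because the prescribed rescaling of $s$ arranges an exact cancellation: under $\eta=\xi/\mu$ the exponent $(s/\mu)|\xi|$ becomes precisely $s|\eta|$, so no exponential bookkeeping and no spectral-gap hypothesis are needed. Only the polynomial comparison remains. For $\sigma>0$, $\langle\mu\eta\rangle\leq\langle\eta\rangle$ gives the $\mu^{-d/2}$ estimate; for $\sigma\leq 0$, the reverse inequality $\langle\mu\eta\rangle\geq\mu\langle\eta\rangle$ supplies an additional $\mu^{2\sigma}$, whose square root becomes the $\mu^{\sigma}$ in $\mu^{-d/2+\sigma}$.

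I expect the only genuine obstacle is the exponential estimate for $\lambda>1$: one has to extract an entire factor of $2^{s\lambda\varepsilon_0}$ from $2^{2s\lambda|\eta|}$ while leaving a copy of $2^{2s|\eta|}$ that can still be absorbed into $\|\varphi\|_{E^s_\sigma}^2$, and this is exactly where the spectral gap $\mathrm{supp}\,\widehat{\varphi}\subset\{|\xi|\geq\varepsilon_0\}$ is used essentially. Without it the gain $2^{s\lambda\varepsilon_0/2}$ cannot appear, since frequencies arbitrarily close to $0$ would force the exponential to approach $1$ uniformly in $\lambda$. Everything else is direct calculation.
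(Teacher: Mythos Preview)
Your proposal is correct and follows essentially the same route as the paper: the change of variables $\eta=\xi/\lambda$, then pointwise comparison of the polynomial weight $\langle\lambda\eta\rangle^\sigma$ with $\langle\eta\rangle^\sigma$ in the two $\sigma$-regimes, and of the exponential weight using the support condition $|\eta|\geq\varepsilon_0$. The paper's exponential step is in fact slightly simpler than your algebraic identity --- it just writes $s\lambda|\eta|=s|\eta|+s(\lambda-1)|\eta|\leq s|\eta|+s(\lambda-1)\varepsilon_0$, obtaining the marginally stronger factor $2^{s(\lambda-1)\varepsilon_0}$ (which of course implies $\lesssim 2^{s\lambda\varepsilon_0/2}$) --- but the content is identical.
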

	\begin{proof}[\textit{Proof}]
		By the definition of $E^s_\sigma (\mathbb{R}^d)$, we have
		\begin{align*}
			\|\varphi_\lambda\|_{E^s_\sigma}
			 = \lambda^{-d/2}\|\langle \lambda \xi\rangle^\sigma 2^{s\lambda |\xi|}\widehat{\varphi}(\xi)\|_{L^2_{\xi}}.
		\end{align*}
If  ${\rm supp } \, \widehat{\varphi}  \subset \{\xi: |\xi|\geq \varepsilon_0\}$ and $\sigma>0$, then
		\begin{align*}
			\|\varphi_\lambda\|_{E^s_\sigma }
			& \leq  \lambda^{-d/2 +\sigma }\|\langle   \xi\rangle^\sigma 2^{s\lambda |\xi|}\widehat{\varphi}(\xi)\|_{L^2_{\xi}}
			  \leq \lambda^{-d/2+\sigma } 2^{s (\lambda-1) \varepsilon_0} \|\varphi\|_{E^{s}_\sigma }.
		\end{align*}
If  ${\rm supp } \, \widehat{\varphi}  \subset \{\xi: |\xi|\geq \varepsilon_0\}$ and $\sigma\leq 0$, then
		\begin{align*}
			\|\varphi_\lambda\|_{E^s_\sigma }
			& \leq  \lambda^{-d/2 }\|\langle   \xi\rangle^\sigma 2^{s\lambda |\xi|}\widehat{\varphi}(\xi)\|_{L^2_{\xi}}
			  \leq \lambda^{-d/2} 2^{s (\lambda-1) \varepsilon_0} \|\varphi\|_{E^{s}_\sigma }.
		\end{align*}
Hence, we have the results for $\lambda >1$. For $0<\mu <1$, we have
\begin{align*}
			\|\varphi_\mu\|_{E^{s/\mu}_\sigma }
			 =   \mu^{-d/2 }\|\langle  \mu \xi\rangle^\sigma 2^{s |\xi|}\widehat{\varphi}(\xi)\|_{L^2_{\xi}}
		\end{align*}
Using $\langle  \mu \xi\rangle \geq \mu  \langle \xi\rangle $ for $\sigma \leq 0$, and  $\langle  \mu \xi\rangle \leq   \langle \xi\rangle $ for $\sigma > 0$, we have the results for $\mu <1$.
	\end{proof}

\begin{lemma}  \label{scalingxssigma}
Let $s, \sigma \leq 0$. Let $ u_\lambda (t,x) =u(\lambda^2t, \lambda x)$. We have
$$
\|u\|_{X^{s\lambda}_{\sigma,\Delta}} \leq  \lambda^{d/2-\sigma} \|u_\lambda\|_{X^{s}_{\sigma,\Delta}}, \ \ \lambda \gg 1.
$$

\end{lemma}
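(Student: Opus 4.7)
The plan is to combine two ingredients: the natural scaling behavior of the $U^2_\Delta$-norm (which will account for the factor $\lambda^{d/2}$), and the $L^\infty$-boundedness of the ratio of the two Fourier symbols $\langle\xi\rangle^\sigma 2^{s\lambda|\xi|}$ and $\langle\xi\rangle^\sigma 2^{s|\xi|}$ when compared on the correct frequency scale (which will account for the factor $\lambda^{-\sigma}$).

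First I would establish the scaling identity
$$
\|u\|_{U^2_\Delta} = \lambda^{d/2}\,\|u_\lambda\|_{U^2_\Delta}.
$$
This follows directly from the atomic definition: if $a(t,x) = \sum_k \chi_{[t_{k-1},t_k)}(t)\, e^{it\Delta}\phi_{k-1}(x)$ is a $U^2_\Delta$-atom, then using the identity $(e^{i\lambda^2 t \Delta}\phi)(\lambda x) = (e^{it\Delta}\phi(\lambda\cdot))(x)$, one checks that $a_\lambda(t,x) = \sum_k \chi_{[t_{k-1}/\lambda^2,\, t_k/\lambda^2)}(t)\, e^{it\Delta}\phi_{k-1}(\lambda\cdot)(x)$, and since $\|\phi(\lambda\cdot)\|_{L^2}=\lambda^{-d/2}\|\phi\|_{L^2}$, the function $\lambda^{d/2}a_\lambda$ is again a $U^2_\Delta$-atom. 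Taking infimum over atomic decompositions gives the equality above.

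Next I would compare the two multiplier symbols involved. Write $T_{s,\sigma}:=\langle\nabla\rangle^\sigma 2^{s|\nabla|}$. A direct Fourier computation gives
$$
\mathscr{F}_x\bigl[(T_{s\lambda,\sigma}u)_\lambda\bigr](\xi) \;=\; \langle\xi/\lambda\rangle^\sigma\, 2^{s|\xi|}\, \mathscr{F}_x u_\lambda(\xi),
$$
so $(T_{s\lambda,\sigma}u)_\lambda = \widetilde{M}\, T_{s,\sigma}u_\lambda$, where $\widetilde{M}$ is the Fourier multiplier with symbol $m(\xi)=\langle\xi/\lambda\rangle^\sigma/\langle\xi\rangle^\sigma$. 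For $\lambda\geq 1$ the elementary bounds $\langle\xi\rangle/\lambda \leq \langle\xi/\lambda\rangle \leq \langle\xi\rangle$ combined with $\sigma\leq 0$ yield $1\leq m(\xi)\leq \lambda^{-\sigma}$, hence $\|m\|_{L^\infty}\leq\lambda^{-\sigma}$. Since Fourier multipliers commute with $e^{-it\Delta}$ and therefore act on $U^2_\Delta$ with operator norm at most $\|m\|_{L^\infty}$ (they map atoms to atoms, rescaled by $\|m\|_{L^\infty}$), we obtain
$$
\|(T_{s\lambda,\sigma}u)_\lambda\|_{U^2_\Delta} \;\leq\; \lambda^{-\sigma}\, \|T_{s,\sigma}u_\lambda\|_{U^2_\Delta} \;=\; \lambda^{-\sigma}\,\|u_\lambda\|_{X^s_{\sigma,\Delta}}.
$$

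Finally, chaining the two steps by applying the scaling identity to $w:=T_{s\lambda,\sigma}u$ in place of $u$ gives
$$
\|u\|_{X^{s\lambda}_{\sigma,\Delta}} \;=\; \|T_{s\lambda,\sigma}u\|_{U^2_\Delta} \;=\; \lambda^{d/2}\|(T_{s\lambda,\sigma}u)_\lambda\|_{U^2_\Delta} \;\leq\; \lambda^{d/2-\sigma}\|u_\lambda\|_{X^s_{\sigma,\Delta}},
$$
which is the desired estimate. The only mildly delicate point is the atomic scaling identity in the first step; the symbol estimate is elementary, and the $U^2_\Delta$-boundedness of Fourier multipliers with $L^\infty$ symbol is a standard consequence of the atomic definition (since $\widetilde{M}$ commutes with $e^{-it\Delta}$, it acts on each atom's $L^2$ profile with norm $\|m\|_{L^\infty}$). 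No smoothness of $m$ is needed, so the argument is robust.
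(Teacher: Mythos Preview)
Your proof is correct, but it takes a genuinely different route from the paper. The paper argues by duality: it first computes how the bilinear pairing $B(\cdot,\cdot)$ scales, namely $B(f_{\lambda^{-1}},v)=\lambda^{d}B(f,v_\lambda)$, then estimates $\|v_\lambda\|_{Y^{-s}_{-\sigma,\Delta}}$ directly from the $V^2$ definition (running through the partition supremum and using $\langle\lambda\xi\rangle^{-\sigma}\leq\lambda^{-\sigma}\langle\xi\rangle^{-\sigma}$) to obtain $\|v_\lambda\|_{Y^{-s}_{-\sigma,\Delta}}\lesssim\lambda^{-d/2-\sigma}\|v\|_{Y^{-s\lambda}_{-\sigma,\Delta}}$, and concludes via $(X^{s\lambda}_{\sigma,\Delta})^*=Y^{-s\lambda}_{-\sigma,\Delta}$. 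You instead stay entirely on the $U^2$ side: you establish the exact scaling identity $\|w\|_{U^2_\Delta}=\lambda^{d/2}\|w_\lambda\|_{U^2_\Delta}$ from the atomic definition, then absorb the mismatch between the symbols $\langle\xi\rangle^\sigma 2^{s\lambda|\xi|}$ and $\langle\xi\rangle^\sigma 2^{s|\xi|}$ into a bounded Fourier multiplier with $\|m\|_{L^\infty}\leq\lambda^{-\sigma}$ acting on $U^2_\Delta$. Your approach is more elementary in that it avoids the $U^p$--$V^{p'}$ duality machinery and yields the inequality with constant exactly $1$; the paper's duality argument, on the other hand, is the template they reuse later for the $\mathcal{X}^s_{\sigma,\Delta}$ spaces, where the dyadic $\ell^2$ structure makes the $V^2$ side computation more convenient.
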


\begin{proof}
We can consider the case of spatial dimensions $d\geq 1$.  In view of $B_{\mathfrak{t}}(f_{\lambda^{-1}}, \, v) = \lambda^{d} B_{\lambda^{-2}\mathfrak{t}}(f, \, v_\lambda) $, we see that
$$
B (f_{\lambda^{-1}}, \, v) = \lambda^{d} B (f, \, v_\lambda).
$$
It follows that
$$
|B ( u, \, v) | \leq  \lambda^{d} \|u_\lambda\|_{X^s_{\sigma, \Delta}} \| v_\lambda\|_{Y^{-s}_{-\sigma, \Delta}}.
$$
Using the definition of the norm on $Y^{-s}_{-\sigma,\Delta}$,  we have
\begin{align*}
\| v_\lambda\|^2_{Y^{-s}_{-\sigma,\Delta} } &  =  \lambda^{-2d} \| \langle \xi\rangle^{-\sigma} 2^{- s|\xi|}  e^{ \mathrm{i} t|\xi|^2} \widehat{v(\lambda^2 t)} (\lambda^{-1} \xi )\|^2_{V^2} \\
 &   = \lambda^{-d }  \! \sup_{\{t_j\}^{J}_{j=0} \in \mathcal{Z}}  \sum^{J-1}_{j=0} \left\|  \langle \lambda \xi\rangle^{- \sigma}  2^{- s\lambda |\xi|}  [e^{ i t_j|\xi|^2} \widehat{v}( t_j, \xi) - e^{ i  t_{j-1}|\xi|^2} \widehat{v}( t_{j-1}, \xi) ] \right\|^2_{2}\\
  &   \leq  \lambda^{-d -2\sigma }  \! \sup_{\{t_j\}^{J}_{j=0} \in \mathcal{Z}}  \sum^{J-1}_{j=0} \left\|  \langle  \xi\rangle^{- \sigma}  2^{- s\lambda |\xi|}  [e^{ i t_j|\xi|^2} \widehat{v}( t_j, \xi) - e^{ i  t_{j-1}|\xi|^2} \widehat{v}( t_{j-1}, \xi) ] \right\|^2_{2}
\end{align*}
It follows that
\begin{align*}
\| v_\lambda\|_{Y^{-s}_{-\sigma,\Delta}} \lesssim \lambda^{-d/2-\sigma }  \| v \|_{Y^{-s\lambda}_{-\sigma,\Delta}}.
\end{align*}
By duality, we get the result.
\end{proof}

\subsection{Proof of Theorem \ref{mainresult}}

	\begin{proof}[\textit{Proof of Theorem \ref{mainresult}}]
		First, we show the local well-posedness and let us consider the mapping
		\begin{align*}
			\mathscr{T}: u\mapsto \chi_{[0,\infty)}(t)e^{it\Delta} u_0 -i\int_0^t e^{i(t-\tau)\Delta}(\chi_{[0,T)}u^2u^*)(\tau)d\tau.
		\end{align*}
By Proposition \ref{linearpart} and \ref{nonlinearpart}, we have for $0<T<1$,
$$
\|\mathscr{T}u\|_{X^s_{\sigma, \Delta}} \lesssim \|u_0\|_{E^s_\sigma }+T^\varepsilon \|u\|^3_{X^s_{\sigma, \Delta}}
$$
and
$$
\|\mathscr{T}u-\mathscr{T} v\|_{X^s_{\sigma, \Delta}}\lesssim T^\varepsilon (\|u\|_{X^s_{\sigma, \Delta}}^2+\|v\|_{X^s_{\sigma, \Delta}}^2)\|u-v\|_{X^s_{\sigma, \Delta}}.
$$
Applying the contraction mapping principle, we can obtain that
		\begin{align} \label{NNLSINT}
			  u(t) = \chi_{[0,\infty)}(t)e^{it\Delta} u_0 -i\int_0^t e^{i(t-\tau)\Delta}(\chi_{[0,T)}u^2u^*)(\tau)d\tau.
		\end{align}
has a unique solution  $u\in X^s_{\sigma, \Delta}$ if
\begin{align}
T = \frac{1}{(2C\|u_0\|^2_{E^s_\sigma})^{1/\varepsilon}},  \label{Tlocal}
\end{align}
and the solution satisfies
\begin{align}
\|u\|_{L^\infty(0, T ; E^s_\sigma) \cap X^s_{\sigma, \Delta}} \leq  2C \|u_0\|_{E^s_\sigma}.   \label{Tlocal2}
\end{align}
If $\|u_0\|_{E^s_\sigma}$ is sufficiently small, one can replace $T^{\varepsilon/2}$ by $T$ in the above arguments and take
\begin{align}
T =  \frac{1}{ 2C\|u_0\|^2_{E^s_\sigma} } >1.  \label{Tlocalsmall}
\end{align}
By Proposition \ref{linearpart} and \ref{nonlinearpart}, we have
$$
\|\mathscr{T}u\|_{X^s_{\sigma, \Delta}} \lesssim \|u_0\|_{E^s_\sigma }+T  \|u\|^3_{X^s_{\sigma, \Delta}}.
$$
Applying the same way as above we can get the local solution $u\in X^s_{\sigma, \Delta}$ and \eqref{Tlocal2} also holds. If $u$ satisfies \eqref{NNLSINT}, then $u$ solves
		\begin{align} \label{NNLSINTori}
			  u(t) =  e^{it\Delta} u_0 -i\int_0^t e^{i(t-\tau)\Delta}( u^2u^*)(\tau)d\tau
		\end{align}
in the time interval $[0,T)$.

Next, we consider the global existence and uniqueness of solutions. We assume, without loss of generality that $\mathrm{supp}~\widehat{u}_0\subset [\varepsilon_0,\infty)$ for some $\varepsilon_0 >0$. Denote $u_{0,\lambda}(x) = \lambda u_0(\lambda x)$.
		Following Lemma \ref{scaling},  we have
\begin{align*}
			 \|u_{0,\lambda}\|_{E^{s}_\sigma }  \lesssim \lambda^{1/2} 2^{s \lambda \varepsilon_0/2} \|u_0 \|_{E^{s}_\sigma } \leq 2^{s \lambda \varepsilon_0/4} , \ \ \lambda \geq \lambda_0:= \lambda_0 (\sigma, \varepsilon_0, \|u_0\|_{E^{s}_\sigma }).
\end{align*}
By the local well-posedness, we see that NNLS \eqref{NNLSINT} with initial data $u_{0,\lambda}$  has a unique solution
$$
u_\lambda \in L^\infty(0, T_\lambda; E^s_\sigma ) \cap  X^s_{\sigma, \Delta} ,  \ \ T_\lambda =   1/     2C \|u_{0,\lambda} \|^2_{E^{s}_\sigma } ,
$$
$$
\|u_\lambda\|_{L^\infty(0, T_\lambda; E^s_\sigma) \cap  X^s_{\sigma, \Delta} }  \leq  2 C  \|u_{0,\lambda} \|_{E^{s}_\sigma } .
$$
By choosing $\lambda \geq \lambda_1:= \lambda_1(\sigma, \varepsilon_0,  \|u_0\|_{E^{s}_\sigma })$, we see that
$$
T_\lambda \geq 2^{\sqrt{\lambda}}, \ \ \  \|u_\lambda\|_{L^\infty(0, T_\lambda; E^s_\sigma) \cap X^s_{\sigma, \Delta} }  \leq  2^{- \sqrt{\lambda}}.
$$
Then, one sees that
\begin{align} \label{scalinguu}
 u(t, x) = \lambda^{-1} u_\lambda(\lambda^{-2}  t,\lambda^{-1}  x)
\end{align}
is the solution of \eqref{NNLS} with initial data $u_{0}$. By Lemma \ref{scaling}, for any $t\leq   2^{ \sqrt{\lambda}  } $ and $\lambda\geq \lambda_1$,
$$
\| u(t)\|_{E^{s\lambda}_\sigma } = \lambda^{-1} \|u_\lambda(\lambda^{-2}  t,\lambda^{-1} \cdot )\|_{E^{s\lambda}_\sigma } \leq   C \lambda^{-1/2-\sigma}  2^{ - \sqrt{\lambda } }\leq    2^{ - \sqrt{\lambda } } .
 $$
In view of Lemma \ref{scalingxssigma}, we see that $u\in X^{s\lambda}_{\sigma, \Delta}$ if $\lambda\geq \lambda_1$.
Now, taking $\lambda =j \in \mathbb{N}$ and $j\geq \lambda_1 $, we obtain a unique solution of NNLS \eqref{NNLSINT}
$
u\in L^\infty(0,   2^{\sqrt{ j}} ; E^{sj}_\sigma ) \cap X^{sj}_{\sigma, \Delta}
$
satisfying
$$
\| u \|_{L^\infty(0,   2^{ \sqrt{j} } ; E^{sj}_\sigma) \cap  X^{sj}_{\sigma, \Delta} }   \leq     2^{-\sqrt{j}}.
 $$
Since $1\ll j\in \mathbb{N}$ is arbitrary, we see that the solution is a global one.   So, we can finish the proof of Theorem \ref{mainresult}.
 \end{proof}

 \subsection{Ill-posedness} \label{Illposed}

 We show that NNLS is ill-posed in any $E^{s'}_{\sigma'}$ if $u_0 \in E^s_{\sigma}$ ($s'\leq s<0, \, \sigma'\leq \sigma, \, \sigma >-1/2$) does not satisfy the condition ${\rm supp} \ \widehat{u}_0 \subset [0,\infty)$, where the ill-posedness means that the solution map from $E^s_{\sigma}$ into $E^{s'}_{\sigma'}$ is not $C^3$. In fact, let\footnote{We denote by $\chi_E$ the characteristic function on $E$.}
 $$
 \widehat{\varphi}(\xi) = 2^{-sk/2} (\chi_{k+I}(\xi) + \chi_{-2k+2I}(\xi) ),  \ \ I =[1/8,\, 1/4], \ 2I = [1/4, 1/2], \ \ k\gg 1.
 $$
 Put $u_0 =\varepsilon \varphi$ for $0<\varepsilon \ll 1$. It is easy to see that $u_0$ is smooth and
 $$
 \|u_0\|_{E^s_\sigma}  \lesssim  \varepsilon 2^{sk/4}.
 $$
If \eqref{NNLS} has a solution $u_\varepsilon$ and the solution map $\varepsilon \varphi \to u_\varepsilon$ is $C^3$, then we have
\begin{align}
& \frac{\partial u_\varepsilon(t)}{\partial \varepsilon}|_{\varepsilon=0} =  e^{it\partial^2_x} \varphi, \ \  \frac{\partial^2 u_\varepsilon(t)}{\partial \varepsilon^2}|_{\varepsilon=0} =0, \label{illposed1} \\
& \frac{\partial^3 u_\varepsilon(t)}{\partial\varepsilon^3 }|_{\varepsilon=0} =\mathrm{i}6 \alpha \int^t_0 e^{\mathrm{i}(t-\tau)\partial^2_x} (e^{\mathrm{i}\tau \partial^2_x} \varphi)  (e^{\mathrm{i}\tau \partial^2_x} \varphi) (e^{\mathrm{i}\tau \partial^2_x} \varphi)^* d\tau.  \label{illposed2}
\end{align}
Noticing that $\frac{\widehat{\partial^3 u_\varepsilon(t)} (\xi)}{\partial\varepsilon^3 }|_{\varepsilon=0} $ is supported in $[1/2,1]\cup (3k+ [1/2,1]) \cup (-6k+[1/2,1]) \cup (-3k+[1/2,1]) $, one can easily see that for $k\gg 1$,
\begin{align}
& \left|\frac{\widehat{\partial^3 u_\varepsilon(t)} (\xi)}{\partial\varepsilon^3 }|_{\varepsilon=0}  \right| \nonumber  \\
& \ \ \ \gtrsim \left| \chi_{[1/2,1]}(\xi)\int_{\mathbb{R}^2}  \frac{e^{\mathrm{i}2 t (\xi-\xi_1)(\xi-\xi_2)} -1 }{2(\xi-\xi_1)(\xi-\xi_2) }  \varphi(\xi_1) \varphi(\xi_2) \varphi(\xi-\xi_1-\xi_2)   d\xi_1d\xi_2 \right|:= R(\xi).\label{illposed3}
\end{align}
We can rewrite $\chi_{[1/2,1]}(\xi)\varphi(\xi_1) \varphi(\xi_2) \varphi(\xi-\xi_1-\xi_2) $ as
\begin{align}
\chi_{[1/2,1]}(\xi)
    &  \varphi(\xi_1) \varphi(\xi_2) \varphi(\xi-\xi_1-\xi_2)  \nonumber\\
= & \ 2^{-3sk/2} \chi_{[1/2,1]}(\xi)    \chi_{k+I}(\xi_1) \chi_{k+I}(\xi_2) \chi_{-2k+2I}(\xi-\xi_1-\xi_2)  \nonumber\\
 &  +  2^{-3sk/2} \chi_{[1/2,1]}(\xi)    \chi_{-2k+2I}(\xi_1) \chi_{k+I}(\xi_2) \chi_{k+I}(\xi-\xi_1-\xi_2)  \nonumber\\
 & +  2^{-3sk/2} \chi_{[1/2,1]}(\xi)    \chi_{k+I}(\xi_1) \chi_{-2k+2I}(\xi_2) \chi_{k+I}(\xi-\xi_1-\xi_2).
 \label{illposed4}
\end{align}
It follows that
\begin{align}
R(\xi)\geq  2^{-sk}\left| \chi_{[1/2,1]}(\xi)\int_{\mathbb{R}^2} \varrho(t, \xi,\xi_1,\xi_2)      \chi_{k+I}(\xi_1) \chi_{k+I}(\xi_2) \chi_{-2k+2I}(\xi-\xi_1-\xi_2)   d\xi_1d\xi_2 \right|.\label{illposed5}
\end{align}
where
\begin{align}
  \varrho(t, \xi,\xi_1,\xi_2)=  \frac{e^{\mathrm{i}2 t (\xi-\xi_1)(\xi-\xi_2)} -1 }{2(\xi-\xi_1)(\xi-\xi_2) }  - \frac{e^{\mathrm{i}2 t (\xi-\xi_1)(\xi_1+\xi_2)} -1 }{ (\xi-\xi_1)(\xi_1+\xi_2) }    \label{illposed6}
\end{align}
Taking $t  = \kappa /k^2, \, 0<\kappa\ll 1$, we see that
\begin{align}
 - Im \, \varrho(t, \xi,\xi_1,\xi_2) & = \frac{\sin 2 t (\xi-\xi_1)(\xi_1+\xi_2) }{ (\xi-\xi_1)(\xi_1+\xi_2) } - \frac{\sin 2 t (\xi-\xi_1)(\xi-\xi_2) }{2(\xi-\xi_1)(\xi-\xi_2) }  \nonumber\\
 &= t(1+o(\kappa)) \geq t/2.
     \label{illposed7}
\end{align}
From \eqref{illposed5}--\eqref{illposed7} it follows that for $t=\kappa/k^2$,
\begin{align}
R(\xi) \gtrsim 2^{-sk} \frac{\kappa}{k^2} \left| \chi_{[1/2,1]}(\xi)\int_{\mathbb{R}^2}    \chi_{ I}(\xi_1) \chi_{ I}(\xi_2) \chi_{ 2I}(\xi-\xi_1-\xi_2)   d\xi_1d\xi_2  \right| \gtrsim  2^{-sk} \frac{\kappa}{k^2}  \chi_{[5/8,7/8]}(\xi)  .\label{illposed8}
\end{align}
  Hence, for any $\sigma', s'\in \mathbb{R}$, we have for $t=\kappa/k^2$,
\begin{align}
  \left\|\frac{\widehat{\partial^3 u_\varepsilon(t)} (\xi)}{\partial\varepsilon^3 }|_{\varepsilon=0}  \right\|_{E^{s'}_{\sigma'}}
 \gtrsim  2^{-sk/2}  \label{illposed9}
\end{align}
So, we obtain that $\frac{\partial^3 u_\varepsilon(t)}{\partial\varepsilon^3 }$ in any $E^{s'}_{\sigma'}$ is not continuous at $\varepsilon =0$.

\section{Global solution of NdNLS} \label{GloNDNLS}

In this section we prove the global existence and uniqueness of the nonlocal derivative NLS.

\subsection{Equivalent equation via gauge transformation}

Using the following nonlocal gauge transform
\begin{align} \label{gauge1}
v(t,x) =\mathcal{G}(u)  = u(t,x)\exp\left(- \frac{\delta}{2} \left(\int^x_{-\infty} - \int_x^{\infty}\right) u(t,y) {u}^*(t,y)dy \right)
                       := u \mathcal{E}(u),
\end{align}
we can show that NdNLS is equivalent to the following equation
\begin{equation}\label{NdNLSequiv}
		 \mathrm{i} v_t + \partial^2_x v - \alpha\,  v^2 \partial_x v^* -\frac{\alpha^2}{2} v^3 (v^*)^2=0,  \  v(x,0) = v_0(x).
\end{equation}

\begin{prop} \label{Equivndnls}
If $u$ is a smooth solution of NdNLS \eqref{NdNLS}, then $v=\mathcal{G}(u)$ with $\alpha =-2\delta$ is the smooth solution of \eqref{NdNLSequiv} with initial data $v_0= \mathcal{G}(u_0)$. Conversely, if $v$ is a smooth solution of \eqref{NdNLSequiv}, then $u= v \mathcal{E}^{-1}(v)$ with initial data $u_0 = v_0 \mathcal{E}^{-1}(v_0)$ is a smooth solution of \eqref{NdNLS}.
\end{prop}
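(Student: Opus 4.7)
The proof is direct verification. Writing $\phi(t,x) := -\tfrac{\delta}{2}\bigl(\int_{-\infty}^x - \int_x^\infty\bigr) u(t,y)u^*(t,y)\,dy$, so that $\mathcal{E}(u)=e^\phi$ and $v=ue^\phi$, the plan is to compute $\partial_x v$, $\partial_x^2 v$, $\partial_t v$, substitute the NdNLS evolution for $u_t$ and the reflected–conjugate equation for $u^*_t$, and check that \eqref{NdNLSequiv} holds when $\alpha=-2\delta$.

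The spatial side is quick. Differentiating $\phi$ under the integral gives $\phi_x=-\delta uu^*$, and two uses of the product rule yield $\partial_x v=(u_x-\delta u^2 u^*)e^\phi$ together with
$$\partial_x^2 v=\bigl(u_{xx}-3\delta uu^* u_x-\delta u^2 u^*_x+\delta^2 u^3(u^*)^2\bigr)e^\phi.$$
For the time derivative I would first derive the PDE satisfied by $u^*(x,t)=\bar u(-x,t)$: evaluating NdNLS at $-x$ and taking the complex conjugate produces $u^*_t=-iu^*_{xx}+i\alpha uu^* u^*_x$. The key algebraic identity is then
$$u_t u^*+u u^*_t=i\partial_y(u_y u^*-u u^*_y)+\tfrac{i\alpha}{2}\partial_y\bigl(u^2(u^*)^2\bigr),$$
so the nonlocal integrals defining $\phi_t$ collapse to a pointwise expression $\phi_t=-\delta\bigl[i(u_x u^*-uu^*_x)+\tfrac{i\alpha}{2}u^2(u^*)^2\bigr]$. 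Combining and imposing $\alpha=-2\delta$, a direct simplification shows $iv_t+\partial_x^2 v=-2\delta\,u^2 u^*_x\,e^\phi$.

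To match the right-hand side of \eqref{NdNLSequiv}, the crucial ingredient is the symmetry identity $\overline{\phi(-x,t)}=-\phi(x,t)$, obtained by substituting $y\mapsto -y$ in the definition of $\phi(-x)$ and using that $\delta$ is real. This yields $v^*=u^* e^{-\phi}$, so the cubic piece $v^2\partial_x v^*=u^2\bigl(u^*_x+\delta u(u^*)^2\bigr)e^\phi$ and the quintic piece $v^3(v^*)^2=u^3(u^*)^2 e^\phi$ share a common factor $e^\phi$. With $\alpha=-2\delta$ the $\delta^2 u^3(u^*)^2 e^\phi$ contribution from $\alpha v^2\partial_x v^*$ cancels exactly against $\tfrac{\alpha^2}{2}v^3(v^*)^2$, leaving precisely $-2\delta u^2 u^*_x\,e^\phi$, matching the left-hand side. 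For the converse, one first observes the invariant $vv^*=uu^*$ (the gauge factors combine as $e^\phi\cdot e^{-\phi}=1$), so $\mathcal{E}^{-1}(v)$ is intrinsically defined by the same nonlocal integral built from $v$, and running the same computation in reverse recovers \eqref{NdNLS} from \eqref{NdNLSequiv}.

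The main obstacle is bookkeeping the sign identity $\overline{\phi(-x,t)}=-\phi(x,t)$: without it the various $e^{k\phi}$ factors on the right-hand side would not collapse to a single $e^\phi$, and the matching between the two equations would fail. The coefficient $\alpha=-2\delta$ is then forced precisely by requiring the quintic term to cancel on both sides, which also explains why the gauge transform must take this specific form.
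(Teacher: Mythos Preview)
Your proposal is correct and follows essentially the same direct-verification strategy as the paper: both rely on the conservation identity $\partial_t(uu^*)=\mathrm{i}\partial_x\bigl[u_x u^*-uu^*_x+\tfrac{\alpha}{2}(uu^*)^2\bigr]$ to collapse the nonlocal time derivative, the sign symmetry $\overline{\phi(-x)}=-\phi(x)$ to obtain $v^*=u^*e^{-\phi}$ (equivalently $vv^*=uu^*$), and the choice $\alpha=-2\delta$ to force the cancellations. The only cosmetic difference is the direction of substitution---the paper writes $u=v\mathcal{E}^{-1}(v)$ and plugs the derivatives of $u$ into \eqref{NdNLS}, whereas you compute the derivatives of $v=u\mathcal{E}(u)$ and verify \eqref{NdNLSequiv} directly---but the algebra and the key ideas are identical.
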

\begin{proof}
It is easy to see that for $v=\mathcal{G}(u)$
$$
v^*(t,x)  = u^*(t,x) \exp\left( \frac{\delta}{2} \left(\int^x_{-\infty} - \int_x^{\infty}\right) u(t,y) {u}^*(t,y)dy \right).
$$
It follows that
$$
v v^* = \mathcal{G}(u) \mathcal{G}(u)^* = u u^*.
$$
So, the gauge transform \eqref{gauge1} can be rewritten as
\begin{align} \label{gauge2}
u(t,x) = v(t,x)\exp\left(\frac{\delta}{2} \left(\int^x_{-\infty} - \int_x^{\infty}\right) v(t,y) {v}^*(t,y)dy \right) = v\mathcal{E}^{-1} (v),
\end{align}
Using \eqref{gauge2}, one can calculate that
$$
u_x = (v_x + \delta v^2 v^*) \mathcal{E}^{-1}(v)
$$
and
\begin{align} \label{gauge3}
u_{xx} & = [v_{xx} + 3\delta v v_x v^* + \delta v^2 v^*_x + \delta^2 (vv^*)^2 v] \mathcal{E}^{-1} (v), \\
u_{t} & = \left[v_{t} + \frac{\delta}{2} v \left(\int^x_{-\infty} - \int_x^{\infty}\right) \partial_t (vv^*)(y)dy \right] \mathcal{E}^{-1} (v). \label{gauge4}
\end{align}
From \eqref{NdNLS} it follows that
\begin{align} \label{gauge5}
\partial_t(uu^*) & = \mathrm{i}\, \partial_x [ u_x u^* - u^*_x u + \frac{\alpha}{2} (uu^*)^2 ].
\end{align}
Inserting \eqref{gauge5} into \eqref{gauge4} and noticing that $uu^*=vv^*$,  we have
\begin{align}
\mathrm{i}\, u_{t} & = \left[\mathrm{i}\, v_{t} - \delta vv_x v^* + \delta v^2 v^*_x - \delta (2\delta + \alpha/2) (vv^*)^2 v \right] \mathcal{E}^{-1} (v). \label{gauge6}
\end{align}
One easily sees that
\begin{align} \label{gauge7}
\alpha uu_x u^* & = (\alpha vv_x v^* + \alpha\delta (vv^*)^2 v) \mathcal{E}^{-1} (v).
\end{align}
Summarizing  \eqref{gauge3}, \eqref{gauge6}, \eqref{gauge7}, we have
\begin{align}
& \mathrm{i} u_t + \partial^2_x u + \alpha\,  u u_x u^* \nonumber\\
& = [ \mathrm{i} v_t + \partial^2_x v + (2\delta+\alpha)  v v_x v^* + 2\delta v^2 \partial_x v^* +(\alpha\delta/2 -\delta^2) v^3 (v^*)^2 ] \mathcal{E}^{-1} (v).
\end{align}
Taking $\alpha+ 2\delta =0$, we obtain \eqref{NdNLSequiv}. Using an analogous way as above, we can show that $u= v \mathcal{E}^{-1}(v)$ with initial data $u_0 = v_0 \mathcal{E}^{-1}(v_0)$ is a smooth solution of \eqref{NdNLS} if $v$ is a smooth solution of \eqref{NdNLSequiv}, the details are omitted.
\end{proof}

\subsection{Boundness of gauge transform in $E^s_\sigma$}

Let us recall the definition of Besov spaces $B^\sigma_{p,q}$, cf. \cite{BeLo1976,Tr1983}.  Let $\psi: \mathbb{R} \rightarrow
[0, 1]$ be a smooth cut-off function which equals $1$ on  $[-1,1]$ and equals $0$ outside  $[-2,2]$.
Write
\begin{align} \label{phi}
\varphi(\xi):=\psi(\xi)-\psi(2\xi),  \ \ \varphi_j(\xi)=\varphi(2^{-j}\xi).
\end{align}
$\triangle_j:=\mathscr{F}^{-1}\varphi_j \mathscr{F}, \  j\in \mathbb{N}$ and $\triangle_0:=\mathscr{F}^{-1}\psi \mathscr{F}$
are said to be the dyadic decomposition operators. One easily sees that ${\rm supp} \varphi_j \subset [2^{j-1} 2^{j+1}] \cup [-2^{j+1}, -2^{j-1}]  \, j\in \mathbb{N}$.
Let $\sigma \in \mathbb{R}$, $1\le p,q\le \infty$.  The norm on Besov spaces are defined as follows:
\begin{align}
\label{Besov} \|f\|_{{B}^s_{p, q}}=
 \left(\sum_{j=0}^{+\infty}2^{jsq}\|\triangle_j f\|^q_{p}\right)^{1/q}
\end{align}
with the usual modification for $q=\infty$.

\begin{prop} \label{BDgauge}
Let $s\leq 0$, $\sigma >0$. Suppose that $u\in E^s_\sigma$ satisfying ${\rm supp} \widehat{u} (\xi) \subset \{\xi: \ \xi\geq \varepsilon_0\}$ for some $\varepsilon_0 >0$. Then we have $\mathcal{G}(u) \in E^s_\sigma$ and there exists $C:= C(\varepsilon_0,\delta,\sigma)>0$ such that
\begin{align}
\|\mathcal{G}(u)\|_{E^s_\sigma} \leq  \exp (C \|u\|^2_{E^s_\sigma}) \|u\|_{E^s_\sigma}. \label{bdgauge1}
\end{align}
\end{prop}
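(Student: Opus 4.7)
The plan is to Taylor-expand the exponential gauge factor, reduce each term to a half-line algebra estimate in $E^s_\sigma$, and sum the resulting series. Write $h := -\delta\,\partial^{-1}_x(uu^*)$, so that $\mathcal{G}(u) = ue^h = u\sum_{k\geq 0} h^k/k!$. First I would verify that the relevant Fourier supports all lie in a positive half-line: since $u^*(x) = \overline{u(-x)}$ gives $\widehat{u^*}(\xi) = \overline{\widehat{u}(\xi)}$, the hypothesis $\mathrm{supp}\,\widehat{u} \subset [\varepsilon_0,\infty)$ forces $\mathrm{supp}\,\widehat{u^*} \subset [\varepsilon_0,\infty)$ and hence $\mathrm{supp}\,\widehat{uu^*} \subset [2\varepsilon_0,\infty)$. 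On that support $\partial^{-1}_x$ acts as multiplication by the bounded symbol $(\mathrm{i}\xi)^{-1}$ with $|\xi|^{-1} \leq (2\varepsilon_0)^{-1}$, so $\mathrm{supp}\,\widehat{h} \subset [2\varepsilon_0,\infty)$ and inductively $\mathrm{supp}\,\widehat{h^k} \subset [2k\varepsilon_0,\infty) \subset [0,\infty)$.

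The technical heart is a half-line algebra estimate: for $f,g \in E^s_\sigma$ with Fourier supports in $[\varepsilon_0,\infty)$,
\[
\|fg\|_{E^s_\sigma} \leq C(\sigma,\varepsilon_0)\|f\|_{E^s_\sigma}\|g\|_{E^s_\sigma}.
\]
The proof uses two features special to a positive half-line support. First, for $s\leq 0$ and $\eta,\,\xi-\eta\geq 0$ the weight is exactly multiplicative, $2^{s|\xi|} = 2^{s\eta}\cdot 2^{s(\xi-\eta)}$, so it distributes through the convolution $\widehat{fg} = \widehat{f}*\widehat{g}$. Second, the elementary estimate $\langle\xi\rangle \leq \sqrt{3}\,\langle\eta\rangle\langle\xi-\eta\rangle$ for $\eta,\xi-\eta\geq 0$ yields a uniform bound on the kernel $\langle\xi\rangle^\sigma/(\langle\eta\rangle^\sigma\langle\xi-\eta\rangle^\sigma)$ for every $\sigma > 0$. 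Putting $F := \langle\cdot\rangle^\sigma 2^{s\cdot}|\widehat{f}|$ and $G$ similarly, with $\|F\|_{L^2} = \|f\|_{E^s_\sigma}$, this reduces the $E^s_\sigma$-norm of the product to a convolution estimate that is closed by Young's inequality, using the lower bound $\eta,\xi-\eta\geq\varepsilon_0$ to produce the required $L^1$-majorant.

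Given the algebra estimate, the rest is bookkeeping. The symbol bound gives
\[
\|h\|_{E^s_\sigma} \leq \frac{|\delta|}{2\varepsilon_0}\|uu^*\|_{E^s_\sigma} \leq C(\sigma,\varepsilon_0,\delta)\|u\|^2_{E^s_\sigma},
\]
and iterating the algebra estimate yields $\|uh^k\|_{E^s_\sigma} \leq C^{k+1}\|u\|_{E^s_\sigma}\|h\|^k_{E^s_\sigma}$. Summing the Taylor series,
\[
\|\mathcal{G}(u)\|_{E^s_\sigma} \leq \|u\|_{E^s_\sigma}\sum_{k\geq 0}\frac{(C\|h\|_{E^s_\sigma})^k}{k!} \leq \|u\|_{E^s_\sigma}\exp\bigl(C'\|u\|^2_{E^s_\sigma}\bigr),
\]
with $C' = C'(\sigma,\varepsilon_0,\delta)$, exactly the claimed bound.

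The main obstacle is the half-line algebra estimate in the range $0<\sigma\leq 1/2$: the pointwise kernel bound is easy, but converting it into an $L^2$-output convolution inequality requires a Cauchy-Schwarz step whose standard Sobolev-multiplier version needs $\sigma>1/2$. Below that threshold one has to lean on the support being bounded below by $\varepsilon_0$ (so that $\langle\xi-\eta\rangle^{-\sigma}$ admits an integrable majorant on the restricted support) and, where needed, on the exponential factor $2^{s\xi}$ with $s<0$, to force the requisite integrability.
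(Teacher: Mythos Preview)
Your overall plan---Taylor expand, exploit the half-line multiplicativity $2^{s|\xi|}=2^{s\eta}2^{s(\xi-\eta)}$, then sum---matches the paper. The decisive gap is your ``half-line algebra estimate'' $\|fg\|_{E^s_\sigma}\leq C(\sigma,\varepsilon_0)\|f\|_{E^s_\sigma}\|g\|_{E^s_\sigma}$: this is \emph{false} for $0<\sigma\leq 1/2$, and your final paragraph does not repair it. After distributing the exponential weight, the claim reduces to asking that $H^\sigma$ restricted to $\mathrm{supp}\,\widehat f\subset[\varepsilon_0,\infty)$ be an algebra. Take $\widehat f(\xi)=\xi^{-\sigma-1/2-\epsilon}\chi_{[1,\infty)}$ with $\epsilon>0$ small: then $\|f\|_{H^\sigma}<\infty$, but $\widehat{f^2}(\xi)\sim\xi^{-2\sigma-2\epsilon}$ for large $\xi$ and $\|f^2\|_{H^\sigma}^2\sim\int\xi^{-2\sigma-4\epsilon}\,d\xi=\infty$ whenever $2\sigma+4\epsilon<1$. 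Your suggested fixes both fail: $\langle\xi-\eta\rangle^{-\sigma}$ is \emph{not} integrable on $[\varepsilon_0,\infty)$ for any $\sigma\leq 1$, and the factor $2^{s\xi}$ has already been absorbed into $F,G$, so it provides no residual decay (and in any case the statement allows $s=0$).

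What the paper does differently is essential: rather than placing $h=-\delta\,\partial_x^{-1}(uu^*)$ back in $E^s_\sigma$ and invoking an algebra, it uses the \emph{full} derivative gain from $\partial_x^{-1}$ to put the gauge factor in $L^\infty$. Writing $u_1=2^{s|\nabla|}u$, one has
\[
\|\partial_x^{-1}(u_1u_1^*)\|_{L^\infty}\leq \|\xi^{-1}(\widehat{u_1}*\widehat{u_1^*})\|_{L^1}\leq \|\xi^{-1-\sigma}\chi_{\{\xi\geq 2\varepsilon_0\}}\|_{L^1}\,\|\langle\xi\rangle^\sigma(\widehat{u_1}*\widehat{u_1^*})\|_{L^\infty}\leq C_{\varepsilon_0,\sigma}\|u\|_{E^s_\sigma}^2,
\]
where the $L^1$ factor is finite precisely because $\sigma>0$ and the support is bounded below. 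The estimate of $\|u_1(\partial_x^{-1}(u_1u_1^*))^k\|_{H^\sigma}$ is then closed via the Kato--Ponce type product rule $\|fg\|_{H^\sigma}\lesssim\|f\|_{H^\sigma}\|g\|_{L^\infty}+\|f\|_{L^{p_1}}\|g\|_{B^\sigma_{p_2,2}}$ with $p_2=1/\sigma$, $1/p_1=1/2-\sigma$: the first term uses the $L^\infty$ bound above, and the second uses the Sobolev embedding $H^\sigma\hookrightarrow L^{p_1}$ (valid exactly in the delicate range $\sigma<1/2$) together with a direct Besov estimate of $\partial_x^{-1}(u_1u_1^*)$. In short, your bound $\|h\|_{E^s_\sigma}\leq\frac{|\delta|}{2\varepsilon_0}\|uu^*\|_{E^s_\sigma}$ treats $\partial_x^{-1}$ merely as a bounded multiplier and discards the one-derivative smoothing that is the mechanism making the low-regularity case work. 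For $\sigma>1/2$ your algebra route is valid and cleaner, but the proposition covers all $\sigma>0$.
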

\begin{proof}
Let us recall that
$
\partial^{-1}_x f = \frac{1}{2}\left(\int^x_{-\infty} + \int^x_{+\infty}\right) f(y) dy.
$
By Taylor's expansion, we have
$$
\mathcal{G}(u) = u \sum^\infty_{k=0} \frac{(-\delta)^k}{k!} (\partial^{-1}_x(uu^*))^k.
$$
It follows that
\begin{align}
\|\mathcal{G}(u)\|_{E^s_\sigma} \leq  \sum^\infty_{k=0} \frac{|\delta|^k}{k!} \|u(\partial^{-1}_x(uu^*))^k\|_{E^s_\sigma}. \label{bdgauge2}
\end{align}
Let us observe that, if ${\rm supp}\, \widehat{w}_j \subset [0, \infty)$, then
\begin{align}
2^{s|\nabla|} (w_1w_2...w_J) = \prod^J_{j=1} (2^{s|\nabla|} w_j). \label{bdgauge3}
\end{align}
For simply, we further write $2^{s|\nabla|} u =u_1 , \ 2^{s|\nabla|} u =u_2$ Using the definition of $E^s_\sigma$ and \eqref{bdgauge3},
\begin{align}
\|u(\partial^{-1}_x(uu^*))^k\|_{E^s_\sigma} = \|u_1 (\partial^{-1}_x(u_1u_2))\|_{H^\sigma}.
\end{align}
Recall that for any $\sigma \in (0,1)$, $1/p=1/p_1+1/p_2= 1/r_1 +1/r_2$ (cf. \cite{WaHuHaGu2011}),
\begin{align} \label{Besovest1}
\|fg\|_{B^\sigma_{p,q}} \lesssim \|f\|_{B^\sigma_{r_1,q}} \|g\|_{r_2} + \|f\|_{p_1}\|g\|_{B^\sigma_{p_2,q}}.
\end{align}
By taking $p=q=r_1=2$, $r_2=\infty$, we have from $H^\sigma = B^\sigma_{2,2}$ that
$$
\|fg\|_{H^\sigma} \lesssim \|f\|_{H^\sigma} \|g\|_{\infty} + \|f\|_{p_1}\|g\|_{B^\sigma_{p_2,2}},
$$
where $p_2= 1/\sigma, \, 1/p_1 = 1/2-\sigma$. It follows that
\begin{align}
\|u_1 (\partial^{-1}_x(u_1u_2))^k\|_{H^\sigma} \lesssim \|u_1\|_{H^\sigma} \|\partial^{-1}_x(u_1u_2)\|^k_{\infty} + \|u_1\|_{p_1}\|(\partial^{-1}_x(u_1u_2))^k\|_{B^\sigma_{p_2,2}}=I+II.
\end{align}
By Young's inequality,
\begin{align}
I & \lesssim \|u\|_{E^s_\sigma} \|\xi^{-1}(\widehat{u}_1 * \widehat{u}_2)\|^k_{L^1_\xi}  \nonumber\\
& \lesssim \|u\|_{E^s_\sigma} \|\xi^{-1-\sigma}\chi_{\{\xi\geq 2\varepsilon_0\}}\|^k_{L^1} \|\xi^\sigma (\widehat{u}_1 * \widehat{u}_2)\|^k_{L^\infty_\xi}  \nonumber\\
& \leq C^k_{\varepsilon_0,\sigma} \|u\|_{E^s_\sigma} (\|u_1\|_{H^\sigma}\|u_2\|_2 + \|u_1\|_2\|u_2\|_{H^\sigma} )^k \nonumber\\
& \leq C^k_{\varepsilon_0,\sigma} \|u\|^{2k+1}_{E^s_\sigma}. \label{xbdgauge3}
\end{align}
Next, we estimate $II$. It suffices to consider the case $\sigma <1/2$. By Sobolev embedding, we have
$$
\|u_1\|_{p_1} \lesssim \|u_1\|_{H^\sigma} = \|u\|_{E^s_\sigma}.
$$
From \eqref{Besovest1} it follows that
\begin{align}
\|(\partial^{-1}_x(u_1u_2))^k\|_{B^\sigma_{p_2,2}} & \lesssim \|\partial^{-1}_x(u_1u_2)\|_{B^\sigma_{p_2,2}} \|\partial^{-1}_x(u_1u_2)\|^{k-1}_{\infty} \nonumber\\
& \lesssim C^{2(k-1)}_{\varepsilon_0,\delta} \|u\|^{2(k-1)}_{E^s_\sigma} \|\partial^{-1}_x(u_1u_2)\|_{B^\sigma_{p_2,2}}. \label{Besovest2}
\end{align}
Using the embedding $B^{\sigma+\varepsilon}_{p_2, \infty} \subset B^\sigma_{p_2,2}$ (cf. \cite{Tr1983}), we see that for any $0<\varepsilon<\sigma$,
\begin{align}
\|\partial^{-1}_x(u_1u_2)\|_{B^\sigma_{p_2,2}} & \lesssim \|\partial^{-1}_x(u_1u_2)\|_{B^{\sigma+\varepsilon}_{p_2,\infty}} \nonumber\\
& \lesssim \|(I-\Delta)^{(-1+\sigma+\varepsilon)/2} (u_1u_2)\|_{p_2} \nonumber\\
& \lesssim \|\langle \xi\rangle^{-1+\sigma+\varepsilon} (\widehat{u}_1* \widehat{u}_2)\|_{p'_2} \nonumber\\
& \lesssim \|\langle \xi\rangle^{-1+\varepsilon}\|_{p'_2}\| \langle \xi\rangle^{\sigma}(\widehat{u}_1* \widehat{u}_2)\|_{\infty} \nonumber\\
 & \leq C_{\varepsilon_0,\sigma} \|u\|^{2}_{E^s_\sigma}. \label{Besovest3}
\end{align}
By \eqref{Besovest2} and \eqref{Besovest3},
$$
II \leq C^{k}_{\varepsilon_0,\sigma} \|u\|^{2k+1}_{E^s_\sigma}.
$$
Summarizing the estimates of $I$ and $II$, we have
\begin{align} \label{Besovest4}
\| u(\partial^{-1}_x(uu^*))^k \|_{E^s_\sigma} \leq C^{k}_{\varepsilon_0,\sigma} \|u\|^{2k+1}_{E^s_\sigma}.
\end{align}
Inserting the estimate \eqref{Besovest4} into \eqref{bdgauge2}, we immediately have the result, as desired.
\end{proof}

\begin{rem}
Using the same way as in \eqref{bdgauge1}, we can show that
\begin{align}
\|\mathcal{G}(u) -\mathcal{G}(v)\|_{E^s_\sigma} \leq  (\exp (C \|u\|^2_{E^s_\sigma})+ \exp (C \|u\|^2_{E^s_\sigma}) ) \|u-v\|_{E^s_\sigma}  \label{rembdgauge1}
\end{align}
if $u,\, v$ satisfy the conditions of Proposition \ref{BDgauge}.  In fact, using Taylor's expansion, we have
\begin{align*}
\mathcal{G}(u)- \mathcal{G}(v) = & (u-v) \sum^\infty_{k=0} \frac{\delta^k}{k!} (\partial^{-1}_x(uu^*))^k
 +   v \sum^\infty_{k=1} \frac{\delta^k}{k!}  \partial^{-1}_x[(u u^*)^k- (vv^*)^k] \\
 = & (u-v) \sum^\infty_{k=0} \frac{\delta^k}{k!} (\partial^{-1}_x(uu^*))^k \\
 & +   v \sum^\infty_{k=1} \frac{\delta^k}{k!}  \partial^{-1}_x \left( u^k(u^*-v^*) \sum^{k-1}_{i=1}(u^*)^{k-i-1}(v^*)^i + (v^*)^k (u-v)\sum^{k-1}_{i=1} u^{k-i-1}v^i \right),
\end{align*}
where we assume that $\sum^0_{i=1} a_i=1$.  Then one can repeat the proof of \eqref{bdgauge1} to get \eqref{rembdgauge1}.
\end{rem}

\subsection{Multi-linear estimates}
\label{Multidnls}

For convenience, we introduce the following
 \begin{align*}
&		\|u\|_{\mathcal{X}^s_{\sigma,\Delta}}: = \left(\sum_{N\in 2^{\mathbb{N}} \cup \{0\}}  \langle N\rangle^{2\sigma} \| 2^{s|\nabla|} P_N  u\|^2_{U^2_{\Delta}} \right)^{1/2}, \\
&		\|v\|_{\mathcal{Y}^s_{\sigma,\Delta}}: = \left(\sum_{N\in 2^{\mathbb{N}} \cup \{0\}}  \langle N\rangle^{2\sigma} \| 2^{s|\nabla|} P_N  v \|^2_{V^2_{\Delta}} \right)^{1/2}.
\end{align*}
For any $T>0$, define
\begin{align}
\mathcal{X}^s_{\sigma,\Delta,T}:=\{u\in \mathcal{X}^s_{\sigma,\Delta} :\ u(t) = e^{i(t-T)\Delta}u(T), \ \forall~ t\geq T\}. \label{Xssigmadelta}
\end{align}
\begin{prop}
Let $s\leq 0, \, \sigma \in \mathbb{R}$. Then $\mathcal{X}^s_{\sigma,\Delta}$ and  $\mathcal{Y}^{-s}_{-\sigma,\Delta}$ are dual spaces and
\begin{equation}\label{ddualuse}
			\|u\|_{\mathcal{X}^s_{\sigma,\Delta}} = \sup_{\|v\|_{\mathcal{Y}^{-s}_{-\sigma,\Delta}}\leq 1 }
		\int_{\mathbb{R}} (F(t), v(t))dt.
		\end{equation}
Moreover, we have the following inclusion relations
\begin{align} \label{Inclusion}
\mathcal{Y}^s_{\sigma,\Delta} \subset {Y}^s_{\sigma,\Delta}, \ \  {X}^s_{\sigma,\Delta} \subset \mathcal{X}^s_{\sigma,\Delta}.
\end{align}
\end{prop}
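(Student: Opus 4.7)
The plan is to derive all three assertions from the block-level duality $(U^2_\Delta)^\ast = V^2_\Delta$ of Proposition \ref{Updual}, combined with standard $\ell^2$-duality of Hilbert-space-valued direct sums and Plancherel orthogonality of the disjoint dyadic frequency blocks $\{P_N\}_{N\in 2^{\mathbb{N}}\cup\{0\}}$. By definition $\mathcal{X}^s_{\sigma,\Delta}$ (resp.\ $\mathcal{Y}^{-s}_{-\sigma,\Delta}$) is the $\ell^2$-direct sum, over dyadic $N$, of suitably weighted block spaces built on $P_N U^2_\Delta$ (resp.\ $P_N V^2_\Delta$). Since the dual of an $\ell^2$-direct sum of Banach spaces is the $\ell^2$-direct sum of the duals, Proposition \ref{Updual} yields the abstract duality $(\mathcal{X}^s_{\sigma,\Delta})^\ast = \mathcal{Y}^{-s}_{-\sigma,\Delta}$. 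For the concrete formula \eqref{ddualuse}, set $u_N := P_N u$, $F_N := P_N F$, so that $u_N(t) = \int_0^t e^{i(t-\tau)\Delta} F_N\, d\tau$. Plancherel on the disjoint Fourier supports gives $\int (F, v)\, dt = \sum_N \int (F_N, P_N v)\, dt$, and applying the single-block analogue of \eqref{dualuse} (the computation in the Remark, localized at frequency $N$) produces
\begin{equation*}
\langle N\rangle^\sigma \|2^{s|\nabla|} u_N\|_{U^2_\Delta} \sim \sup\Bigl\{\, \int (F_N, \tilde v_N)\, dt \,:\, \tilde v_N = P_N \tilde v_N,\ \|\tilde v_N\|_{Y^{-s}_{-\sigma,\Delta}} \leq 1\,\Bigr\}.
\end{equation*}
Cauchy--Schwarz in $N$ with weights $\langle N\rangle^{\pm\sigma}$ gives the $\leq$ direction of \eqref{ddualuse}; the matching lower bound follows by assembling the block-wise optimizers $\tilde v_N$ into a single test function $v = \sum_N c_N \tilde v_N$ with $\{c_N\}\in\ell^2$ of unit norm, which is admissible because the Fourier supports of the $\tilde v_N$ are disjoint.

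For the inclusion $\mathcal{Y}^s_{\sigma,\Delta}\subset Y^s_{\sigma,\Delta}$, write $w := \langle\nabla\rangle^\sigma 2^{s|\nabla|} u$. For any partition $\mathrm{t}=\{t_k\}_{k=0}^K\in\mathcal{Z}$, Plancherel at each fixed $k$ gives
\begin{equation*}
\sum_{k=1}^K \bigl\| e^{-it_k\Delta}w(t_k) - e^{-it_{k-1}\Delta}w(t_{k-1}) \bigr\|_{L^2}^2 = \sum_N \sum_{k=1}^K \bigl\| P_N[e^{-it_k\Delta}w(t_k) - e^{-it_{k-1}\Delta}w(t_{k-1})] \bigr\|_{L^2}^2.
\end{equation*}
Taking $\sup_{\mathrm{t}}$ on the left and bounding by $\sum_N$ of block-wise sups yields $\|w\|_{V^2_\Delta}^2 \leq \sum_N \|P_N w\|_{V^2_\Delta}^2$, and $\|P_N w\|_{V^2_\Delta}\sim\langle N\rangle^\sigma \|2^{s|\nabla|}P_N u\|_{V^2_\Delta}$ by frequency localization. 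For the inclusion $X^s_{\sigma,\Delta}\subset\mathcal{X}^s_{\sigma,\Delta}$, it suffices to prove $\bigl(\sum_N \|P_N w\|_{U^2_\Delta}^2\bigr)^{1/2} \lesssim \|w\|_{U^2_\Delta}$. Testing on a $U^2$-atom $a = \sum_k \chi_{[t_{k-1},t_k)}\phi_{k-1}$ with $\sum_k\|\phi_{k-1}\|_{L^2}^2 = 1$, the projection $P_N a = \sum_k \chi_{[t_{k-1},t_k)}(P_N \phi_{k-1})$ is again a $U^2$-atom up to the normalization $(\sum_k\|P_N\phi_{k-1}\|_{L^2}^2)^{1/2}$, so $\sum_N \|P_N a\|_{U^2}^2 \leq \sum_N\sum_k \|P_N\phi_{k-1}\|_{L^2}^2 = \sum_k \|\phi_{k-1}\|_{L^2}^2 = 1 = \|a\|_{U^2}^2$ by Plancherel at fixed $k$. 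This extends to general $u\in U^2$ by atomic decomposition and subadditivity of the $\ell^2_N$-norm, followed by taking infimum over decompositions.

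The technical crux is the duality formula \eqref{ddualuse}: the block-wise dualities must be aggregated $\ell^2$-optimally across $N$ to recover the global $\mathcal{X}$-norm as a supremum against a single $\mathcal{Y}$-normalized test function, and the disjointness of the Fourier supports of the $P_N$ is what makes the assembly admissible. Once the duality is in place, the two inclusions are routine consequences of the $\ell^2/\ell^1$ dichotomy between $V^2$ (super-additive under Littlewood--Paley decomposition) and $U^2$ (sub-additive).
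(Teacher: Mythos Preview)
Your proof is correct and, for the duality formula and the first inclusion $\mathcal{Y}^s_{\sigma,\Delta}\subset Y^s_{\sigma,\Delta}$, it fleshes out exactly what the paper leaves implicit (the paper simply says the duality is ``analogous to \eqref{dualuse}'' and the first inclusion follows ``from the definitions'').

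The one genuine methodological difference is in the second inclusion $X^s_{\sigma,\Delta}\subset\mathcal{X}^s_{\sigma,\Delta}$. You prove it directly via the atomic structure of $U^2$: for an atom $a$, the projections $P_N a$ are rescaled atoms, Plancherel in $N$ gives $\sum_N\|P_N a\|_{U^2}^2\leq 1$, and Minkowski in $\ell^2_N$ extends this to general $u$. The paper instead deduces it in one line from the first inclusion and the two duality formulas: since $\mathcal{Y}^{-s}_{-\sigma,\Delta}\subset Y^{-s}_{-\sigma,\Delta}$, the supremum in \eqref{ddualuse} runs over a smaller unit ball than the one in \eqref{dualuse}, so $\|u\|_{\mathcal{X}^s_{\sigma,\Delta}}\leq\|u\|_{X^s_{\sigma,\Delta}}$. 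Your route is more self-contained (it does not rely on having the duality already in hand) and makes the Littlewood--Paley square-function behaviour of $U^2$ explicit; the paper's route is shorter and emphasizes that the two inclusions are formally adjoint to each other.
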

\begin{proof}
The duality between $\mathcal{X}^s_{\sigma,\Delta}$ and  $\mathcal{Y}^{-s}_{-\sigma,\Delta}$ follows an analogous way as in  \eqref{dualuse}.   The first inclusion in \eqref{Inclusion} can be obtained by using the definitions  of ${Y}^s_{\sigma,\Delta}$ and $\mathcal{Y}^s_{\sigma,\Delta}$.  In view of the first inclusion and the duality, we have the second embedding in \eqref{Inclusion}.
\end{proof}

Now,  let us consider the quintilinear estimate in $\mathcal{X}^s_{\sigma,\Delta}$.
\begin{prop}[Quintilinear estimate]\label{dquintilinear}
	Let $T>0$, $s\leq 0$, $\sigma>0$. Assume that $u_j\in \mathcal{X}^s_{\sigma,\Delta}$, $u_j(t) = 0 $ for $t<0$ or $t\geq T$, and ${\rm supp}\,\widehat{u_j(t)} \subset [0,\infty)$ for j=1,...,5, then we have for some $\varepsilon>0$,
	\begin{align*}
		\left\|\int_0^t e^{i(t-\tau)\Delta}  u_1u_2 u_3 u_4^*u_5^* (\tau)d\tau\right\|_{\mathcal{X}^s_{\sigma,\Delta}}\lesssim (T^{\varepsilon/2} +T) \prod^5_{j=1} \|u_j\|_{\mathcal{X}^s_{\sigma,\Delta}}.
	\end{align*}
	\end{prop}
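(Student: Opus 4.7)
By the duality \eqref{ddualuse}, it suffices to show that for any test function $v$ the sextilinear form
\[
\mathcal{M} := \left|\int_{\mathbb{R}^2} u_1 u_2 u_3 u_4^* u_5^* \bar v\, dx\,dt\right|
\]
satisfies $\mathcal{M} \lesssim (T^{\varepsilon/2} + T) \prod_{j=1}^5 \|u_j\|_{\mathcal{X}^s_{\sigma,\Delta}} \|v\|_{\mathcal{Y}^{-s}_{-\sigma,\Delta}}$, and the plan is to mirror the structure of Proposition~\ref{nonlinearpart} while adding extra bookkeeping for the two additional factors and for the $\ell^2$-summation built into $\mathcal{X}^s_{\sigma,\Delta}$. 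The key structural fact is that $\widehat{u_j^*}(\xi,t) = \overline{\widehat{u_j}(\xi,t)}$, so each $u_j^*$ also has spatial Fourier support in $[0,\infty)$; passing to the Fourier side via Parseval produces the constraint $\xi = \xi_1 + \cdots + \xi_5$ with all $\xi_j \geq 0$, which forces $N \sim \xi \leq 5\max_j N_j$ and $N_j \leq \xi$. Combined with $2^{s|\xi_j|} = 2^{s\xi_j}$ and $s\sum_j \xi_j = s\xi$, this lets the weights $2^{s|\nabla|}$ be redistributed freely between the $u_j$'s and $v$, placing each input $2^{s|\nabla|}\langle\nabla\rangle^\sigma P_{N_j}u_j$ into $U^2_\Delta$ and the test $2^{-s|\nabla|}\langle\nabla\rangle^{-\sigma}P_N v$ into $V^2_\Delta$, up to a multiplicative dyadic factor $\langle N\rangle^\sigma \prod_j \langle N_j\rangle^{-\sigma}$.

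After a Littlewood--Paley decomposition $u_j = \sum_{N_j} P_{N_j}u_j$ and $v = \sum_N P_N v$, I would split $\mathcal{M} = \mathcal{M}_{lo} + \mathcal{M}_{hi}$, where $\mathcal{M}_{lo}$ collects the pieces with every dyadic block $\lesssim 1$ and is dispatched by H\"older and Bernstein exactly as in \eqref{Llow}, producing the $T$-term. For $\mathcal{M}_{hi}$, the resonance identity
\[
(\tau + \xi^2) - \sum_{j=1}^{3}(\tau_j + \xi_j^2) - \sum_{j=4}^{5}(\tau_j - \xi_j^2) = 2\xi_4^2 + 2\xi_5^2 + 2\!\!\sum_{1\leq i<j\leq 5}\!\!\xi_i\xi_j,
\]
together with $\xi_j \geq 0$, bounds the maximum dispersion modulation from below by $\gtrsim N \cdot N_{(2)}$, where $N_{(2)}$ denotes the second-largest dyadic block among $N_1,\ldots,N_5$. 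Following Step~1 of Proposition~\ref{nonlinearpart}, I would place the factor realizing the maximum modulation into $L^2_{xt}$ via the dispersion-decay estimate \eqref{dispersiondecay}, pair two frequency-separated factors via the bilinear estimate Lemma~\ref{Bilinear}, and distribute the remaining factors among $L^4_t L^\infty_x$ (controlled by the admissible Strichartz pair $(4,\infty)$ in one spatial dimension through Proposition~\ref{strichartz}) and $L^\infty_{xt}$ (controlled by Bernstein from $L^\infty_t L^2_x$), being careful to reserve the $L^\infty_{xt}$ slots for the lowest dyadic blocks so that the Bernstein losses $N_j^{1/2}$ concern only the smallest frequencies.

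Finally, Cauchy--Schwarz over the dyadic indices $N, N_1,\ldots,N_5$ converts the pointwise estimate on each term into the target dyadic-$\ell^2$ norms $\|u_j\|_{\mathcal{X}^s_{\sigma,\Delta}}$ and $\|v\|_{\mathcal{Y}^{-s}_{-\sigma,\Delta}}$. This is where the hypothesis $\sigma > 0$ (stronger than $\sigma > -1/2$ in the trilinear case) is essential: the two extra factors $u_4^*, u_5^*$ contribute two additional weights $\langle N_4\rangle^{-\sigma}\langle N_5\rangle^{-\sigma}$ in the dyadic Cauchy--Schwarz, and only $\sigma > 0$ ensures a convergent geometric series uniformly down to the low-frequency end. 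The main obstacle I anticipate is the near-diagonal regime $N \sim N_1 \gg N_{(2)}$, where the resonance bound yields only $N \cdot N_{(2)}$ instead of $N^2$; here the pairing for bilinear Strichartz must be chosen so that the smaller frequency $N_{(2)}$ provides the spectral separation, and one has to enumerate by case which of the six factors carries the maximal modulation (cf.\ Steps~1--2 in the proof of Proposition~\ref{nonlinearpart}) before the final summation closes.
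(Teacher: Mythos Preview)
Your proposal follows the template of the trilinear Proposition~\ref{nonlinearpart}, but the paper takes a markedly simpler route that bypasses the modulation and bilinear machinery entirely. After the same duality reduction and low/high split $\mathcal{H}=\mathcal{H}_{lo}+\mathcal{H}_{hi}$, the paper observes that with six factors a straight H\"older places every factor in $L^6_{x,t}$, and in one space dimension the pair $(6,6)$ is Schr\"odinger-admissible ($2/6+1/6=1/2$). Concretely, for $\mathcal{H}_{hi}$ the paper decomposes only in the output frequency $N$ of $v$, notes that $\xi=\xi_1+\cdots+\xi_5$ with all $\xi_j\ge 0$ forces at least one $\xi_j\ge 2^{-5}N$ (say $j=1$), and bounds
\[
|\mathcal{H}_{hi}|\le\sum_{N\in 2^{\mathbb{N}}}\|2^{s|\nabla|}P_{2^{-5}N\le\cdot<N}u_1\|_{L^6_{xt}}\,\|2^{-s|\nabla|}P_Nv\|_{L^6_{xt}}\prod_{j=2}^5\|2^{s|\nabla|}P_{<N}u_j\|_{L^6_{xt}}.
\]
Each $L^6_{xt}$ norm is handled by a small Bernstein loss $\langle k\rangle^{\varepsilon/3}$ plus Strichartz at the nearby admissible pair $\bigl(\tfrac{6}{1-\varepsilon},\tfrac{6}{1+2\varepsilon}\bigr)$, gaining $T^{\varepsilon/6}$; the dyadic sum $\sum_{k}\langle k\rangle^{\varepsilon/3-\sigma}$ converges precisely because $\sigma>0$, which is where that hypothesis enters. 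A single Cauchy--Schwarz in $N$ then pairs the localized $u_1$-piece with $P_N v$. No resonance identity, no dispersion-modulation decay, no Lemma~\ref{Bilinear}, no case analysis on which factor carries the maximum modulation.

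Your approach can in principle be made to work, but be aware of a H\"older bookkeeping issue in your sketch: placing one factor in $L^2_{xt}$ via \eqref{dispersiondecay} \emph{and} a bilinear product in $L^2_{xt}$ already exhausts the budget ($\tfrac12+\tfrac12=1$), so the three remaining factors must all sit in $L^\infty_{xt}$, not $L^4_tL^\infty_x$; you would have to choose one mechanism or the other per case, as in Steps~1--2 of Proposition~\ref{nonlinearpart}, and then carry the near-diagonal regime you flag through several subcases. The paper's $L^6$ argument sidesteps all of this; the point is that the quintilinear estimate with $\sigma>0$ is genuinely \emph{easier} than the trilinear one with $\sigma>-1/2$, because six factors fit the admissible Strichartz exponent exactly.
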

\begin{proof}
	By duality, we only need to show
	\begin{align*}
		\left|\int_{\mathbb{R}^2}  u_1u_2  u_3 u_4^* u_5^* \bar{v} dxdt\right|\lesssim (T^{\varepsilon/2} +T)  \prod^5_{j=1} \|u_j\|_{\mathcal{X}^s_{\sigma,\Delta}} \|v\|_{\mathcal{Y}^{-s}_{-\sigma,\Delta}}.
	\end{align*}
By density argument, we can assume that the support sets of
	$\mathscr{F}_x u_j(t,\xi),\, \mathscr{F}_x v(t,\xi)$ contained in $[0,\infty)$ are compact. We have
\begin{align*}
		\mathcal{H} &: =  \int_{\mathbb{R}^2} u_1u_2  u_3 u_4^* u_5^* \bar{v} dxdt\\
		& = \int_{\mathbb{R}^2}  u_1u_2  u_3 u_4^* u_5^*    \overline{P_{< 1 } v} dxdt + \int_{\mathbb{R}^2} u_1u_2  u_3 u_4^* u_5^*    \overline{P_{\geq 1} v} dxdt ;=  \mathcal{H}_{lo} + \mathcal{H}_{hi}.
	\end{align*}
It is easy to see that
\begin{align}
		 |\mathcal{H}_{lo}|
		 & \leq  T \| P_{< 1 }  (u_1u_2  u_3 u_4^* u_5^* )  \|_{L^\infty_t L^2_x}  \|\overline{P_{< 1 } v} \|_{L^\infty_t L^2_x} \nonumber\\
 & \lesssim  T \prod^{5}_{i=1}\| P_{< 1 }  u_i \|_{L^\infty_t L^2_x}  \| P_{< 1 } v\|_{L^\infty_t L^2_x} \nonumber \\
  & \lesssim  T \prod^{5}_{i=1}\|u_i \|_{\mathcal{X}^s_{\sigma,\Delta}}  \|v\|_{\mathcal{Y}^{-s}_{-\sigma,\Delta}}. \label{dqunitiLow}
	\end{align}
 We can rewrite $\mathcal{H}_{hi}$ as
\begin{align*}
\int_{[0,\infty)^5\times\mathbb{R}^5}    \prod^3_{j=1}\widehat{u}_j(\xi_j,\tau_j) \prod^5_{j=4} \overline{\widehat{ u}}_j (\xi_j, - \tau_j) \overline{\widehat{P_{\geq 1} v}} (\xi_1+...+\xi_5 , \tau_1+...+\tau_5)  \prod^5_{j=1} d\xi_j   d\tau_j.
	\end{align*}
In order to simplify the notation, $ [0,\infty)^5\times\mathbb{R}^5$ and $d\xi_1...d\xi_5 d\tau_1...d\tau_5$ will be omitted in the expression of $\mathcal{J}_{hi}$. Noticing that ${\rm supp} \mathscr{F}_x{u}_j, \mathscr{F}_x{v} \subset [0,\infty)$ we can further rewrite $\mathcal{J}_{hi}$ as
	\begin{align*}
\mathcal{H}_{hi}
		  = \sum_{N\in {  2^{\mathbb{N}}}}\int   \prod^3_{j=1}2^{s|\xi_j|}\widehat{P_{< N }u}_j(\xi_j,\tau_j)  \prod^5_{j=4} 2^{s|\xi_j|} \overline{\widehat{P_{< N } u}}_j (\xi_j, - \tau_j) 2^{-s|\xi|}\overline{\widehat{P_N v}} (\xi , \tau ),
	\end{align*}
where we write $\xi_1+...+\xi_5= \xi$ and $ \tau_1+...+\tau_3 = \tau$ in $\mathcal{J}_{hi}$. Using $\xi = \xi_1+...+\xi_5$, one easily sees that  at least one frequency of  ${P_{< N }u}_1,..., {P_{< N }u}_5$  is greater than $2^{-5}N$. We can assume, without loss of generality that ${P_{< N }u}_1 = {P_{2^{-5}N \leq \cdot < N }u}_1$. It follows that
	\begin{align} \label{quintichigh}
|\mathcal{H}_{hi}| \leq  \sum_{N\in {  2^{\mathbb{N}}}} \prod^5_{j=2} \|2^{s|\nabla|} P_{<N} u_j\|_{L^6_{x,t}} \|P_{2^{-5}N \leq \cdot < N } 2^{s|\nabla|} u_1\|_{L^6_{x,t}}  \|2^{-s|\nabla|}  {P_N v}\|_{L^6_{x,t}}.
	\end{align}
Using the Bernstein, H\"older and Strichartz inequalities, one has that for $\varepsilon/3-\sigma <0$,
	\begin{align}
  \| P_{<N} u \|_{L^6_{x,t}}  & \lesssim  \sum_{0\leq k \leq N, \ k \ {\rm dyadic}} \langle k\rangle^{\varepsilon/3} \| P_{k} u\|_{L^6_t L^{6/(1+2\varepsilon)}_x}   \nonumber\\
 & \lesssim  T^{\varepsilon/6}  \sum_{0\leq k \leq N, \ k \ {\rm dyadic}}   \langle k \rangle^{\varepsilon/3} \|  P_{k} u\|_{L^{6/(1-\varepsilon)}_t L^{6/(1+2\varepsilon)}_x} \nonumber \\
 & \lesssim   T^{\varepsilon/6}  \left(\sum_{k\in 2^{\mathbb{N}}\cup \{0\}}  \langle k\rangle^{2\sigma}\| P_k u\|^2_{U^2 _\Delta} \right)^{1/2}.
 \label{quintilinear1est2}
	\end{align}
Taking $u= 2^{s|\nabla|}  u_j$ in \eqref{quintilinear1est2}, we have
	\begin{align}
  \| P_{<N}2^{s|\nabla|}  u_j \|_{L^6_{x,t}} \lesssim    T^{\varepsilon/6} \|u_j\|_{\mathcal{X}^s_{\sigma,\Delta}}.
 \label{quintilinear1est2a}
	\end{align}
Inserting the estimate \eqref{quintilinear1est2a} into \eqref{quintichigh}, then applying the Cauchy-Schwarz, Strichartz inequalities and Proposition \ref{basicproperty},  we obtain that
	\begin{align} \label{quintichigh2aa}
|\mathcal{H}_{hi}| & \lesssim  T^{\varepsilon/2} \prod^5_{j=2} \|u_j\|_{\mathcal{X}^s_{\sigma,\Delta}} \sum_{N\in {  2^{\mathbb{N}}}}   \|P_{2^{-5}N \leq \cdot < N } 2^{s|\nabla|} u_1\|_{L^6_{x,t}}  \|2^{-s|\nabla|}  {P_N v}\|_{L^6_{x,t}}  \nonumber\\
& \lesssim  T^{\varepsilon/2} \prod^5_{j=2} \|u_j\|_{\mathcal{X}^s_{\sigma,\Delta}} \left(\sum_{N\in {  2^{\mathbb{N}}}\cup\{0\} }  \langle N\rangle^{2\sigma} \|P_{2^{-5}N \leq \cdot < N } 2^{s|\nabla|} u_1\|^2_{U^2_\Delta} \right)^{1/2} \nonumber\\
 &  \quad \quad \quad \times \left(\sum_{N\in {  2^{\mathbb{N}}} \cup\{0\}}  \langle N\rangle^{-2\sigma}\|2^{-s|\nabla|}  {P_N v}\|^2_{V^2_\Delta} \right)^{1/2}  \nonumber\\
 & \lesssim  T^{\varepsilon/2} \prod^5_{j=1} \|u_j\|_{\mathcal{X}^s_{\sigma,\Delta}}   \|v\|_{\mathcal{Y}^{-s}_{- \sigma,\Delta}}.
	\end{align}
Combining the estimates  \eqref{dqunitiLow} and  \eqref{quintichigh2aa}, we have the result, as desired.
\end{proof}

	\begin{prop}[Trilinear estimate]\label{dnonlinearpart}
	Let $T>0$, $s\leq 0$, $\sigma>0$. Assume that $u_j\in X^s_{\sigma,\Delta}$, $u_j(t) = 0 $ for $t<0$ or $t\geq T$, and ${\rm supp}\,\widehat{u_j(t)} \subset [0,\infty)$ for j=1,2,3, then we have for some $\varepsilon>0$,
	\begin{align*}
		\left\|\int_0^t e^{i(t-\tau)\Delta}  u_1u_2 \partial_x u_3^*  (\tau)d\tau\right\|_{\mathcal{X}^s_{\sigma,\Delta}}\lesssim (T^{\varepsilon/2} +T) \prod^3_{j=1} \|u_j\|_{\mathcal{X}^s_{\sigma,\Delta}}.
	\end{align*}
	\end{prop}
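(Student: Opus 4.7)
By the duality $(\mathcal{X}^s_{\sigma,\Delta})^* = \mathcal{Y}^{-s}_{-\sigma,\Delta}$ recorded in \eqref{ddualuse}, it suffices to establish
$$
\left|\int_{\mathbb{R}^2} u_1\, u_2\, \partial_x u_3^* \, \bar v\, dx\, dt\right| \lesssim (T^{\varepsilon/2} + T)\, \prod_{j=1}^3 \|u_j\|_{\mathcal{X}^s_{\sigma,\Delta}}\, \|v\|_{\mathcal{Y}^{-s}_{-\sigma,\Delta}}.
$$
Passing to Fourier variables via Parseval, the support hypotheses force $\xi_1,\xi_2,\xi_3 \geq 0$, while $\partial_x u_3^*$ contributes a multiplier $i\xi_3$. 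Splitting $v = P_{<1}v + \sum_{N\in 2^{\mathbb{N}}} P_N v$, I denote the corresponding pieces of the integral by $\mathcal{I}_{lo}$ and $\mathcal{I}_{hi}$. For $\mathcal{I}_{lo}$, nonnegativity of the $\xi_j$ together with $\xi_1+\xi_2+\xi_3 < 1$ forces each $\xi_j < 1$, so the derivative is a bounded multiplier; Hölder, Bernstein, and $U^2_\Delta \hookrightarrow L^\infty_t L^2_x$ then yield $|\mathcal{I}_{lo}| \lesssim T\, \prod_j \|u_j\|_{\mathcal{X}^s_{\sigma,\Delta}}\, \|v\|_{\mathcal{Y}^{-s}_{-\sigma,\Delta}}$, exactly as in the estimate of $\mathcal{H}_{lo}$ in Proposition~\ref{dquintilinear}.

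For $\mathcal{I}_{hi} = \sum_N \mathcal{I}_N$ I further Littlewood--Paley decompose each $u_j = \sum_{N_j} P_{N_j} u_j$ and invoke the identity from \eqref{highestmodulation},
$$
\sum_{j=1}^3 |\tau_j + \xi_j^2| + |\tau + \xi^2| \gtrsim 2(\xi_1 + \xi_3)(\xi_2 + \xi_3),
$$
valid because $\xi = \xi_1 + \xi_2 + \xi_3$ and $\tau = \tau_1 + \tau_2 - \tau_3$ for the $u_3^*$-factor. Mirroring the two-step split of Proposition~\ref{nonlinearpart}, I write $\mathcal{I}_N = \mathcal{I}_N^A + \mathcal{I}_N^B$: in $\mathcal{I}_N^A$, $u_3$ is restricted to $P_{N/8 \le \cdot < N}$ (so $\xi_3 \sim N$ and the highest dispersion modulation is $\gtrsim N^2$), while in $\mathcal{I}_N^B$, $u_3$ is restricted to $P_{<N/8}$ (so $\xi_1+\xi_2 > 7N/8$, pushing the high frequency onto one of $u_1, u_2$; further sub-splitting parallels the $\mathcal{L}_{21}, \mathcal{L}_{22}$ analysis in Proposition~\ref{nonlinearpart}).

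In $\mathcal{I}_N^A$, I place the function carrying the highest dispersion modulation in $L^2_{x,t}$ and invoke \eqref{dispersiondecay} with $\mu \sim N^2$ to gain $N^{-1}$, which cancels the derivative loss $|\xi_3| \sim N$; the remaining two $u_j$-factors go into $L^4_t L^\infty_x$ via the Strichartz--Bernstein chain \eqref{L1est2}, and $P_N v$ into $L^\infty_t L^2_x$. Writing $\|P_{N_j} u_j\|_{V^2_\Delta} = \langle N_j\rangle^{-\sigma}\bigl[\langle N_j\rangle^\sigma \|2^{s|\nabla|} P_{N_j} u_j\|_{V^2_\Delta}\bigr]$ and $\|P_N v\|_{V^2_\Delta} = \langle N\rangle^\sigma \bigl[\langle N\rangle^{-\sigma}\|2^{-s|\nabla|} P_N v\|_{V^2_\Delta}\bigr]$ exposes the $\ell^2$ weights of $\mathcal{X}^s_{\sigma,\Delta}$ and $\mathcal{Y}^{-s}_{-\sigma,\Delta}$; the cancellation $\langle N_3\rangle^{-\sigma}\langle N\rangle^\sigma = O(1)$ (since $N_3 \sim N$) leaves a residual dyadic factor $\langle N_1\rangle^{-\sigma}\langle N_2\rangle^{-\sigma+\varepsilon}$ (or a permutation), absolutely summable provided $0 < \varepsilon < \sigma$. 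For $\mathcal{I}_N^B$, the bilinear estimate of Lemma~\ref{Bilinear} applied to a pair whose Fourier supports are separated by $\sim N$ yields the gain $N^{-1/2+\varepsilon}$, and the derivative loss $|\xi_3| < N/8$ is absorbed after a sub-dyadic decomposition in $N_3$. A final Cauchy--Schwarz on $N_3 \sim N$ (or on the analogous leading high-frequency index in $\mathcal{I}_N^B$) matches the $\ell^2$ structure of $\mathcal{X}^s_{\sigma,\Delta}$ and $\mathcal{Y}^{-s}_{-\sigma,\Delta}$.

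The main obstacle is the bookkeeping in $\mathcal{I}_N^A$: the factor $|\xi_3|$ produced by the derivative must be absorbed \emph{exactly} by the dispersion modulation gain $N^{-1}$ in the resonant regime and by the bilinear frequency gap in the non-resonant regime, while the $\langle N_j\rangle^\sigma$ weights of the Besov-$\ell^2$-type norm $\mathcal{X}^s_{\sigma,\Delta}$ (rather than the Bessel-potential multiplier norm $X^s_{\sigma,\Delta}$ used in Proposition~\ref{nonlinearpart}) have to be redistributed so that the remaining dyadic exponents are summable precisely at the threshold $\sigma > 0$. Once this accounting is carried out, the rest of the argument is a direct adaptation of the proof of Proposition~\ref{nonlinearpart}.
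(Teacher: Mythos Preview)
Your proposal is correct and follows essentially the same route as the paper: duality reduction, the low/high split on $v$, the decomposition according to whether $u_3$ carries frequency $\sim N$ (so that the derivative loss is exactly compensated by the $N^{-1}$ dispersion--modulation gain via \eqref{dispersiondecay}) or not (so that Lemma~\ref{Bilinear} applies to a frequency-separated pair), together with the $\ell^2$ Cauchy--Schwarz matching of the $\langle N\rangle^{\pm\sigma}$ weights in $\mathcal{X}^s_{\sigma,\Delta}$ and $\mathcal{Y}^{-s}_{-\sigma,\Delta}$. The only cosmetic difference is that the paper works with the projections $P_{<N}u_j$ and sums internally (cf.\ \eqref{dL1est2}, \eqref{ddL1est2}) rather than carrying individual dyadic indices $N_j$, but the accounting is the same.
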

	\begin{proof}[\textbf{Proof}]
	By duality, we only need to show
	\begin{align*}
		\left|\int_{\mathbb{R}^2}  u_1u_2 \partial_x u_3^*  \bar{v} dxdt\right|\lesssim (T^{\varepsilon/2} +T) \prod^3_{j=1} \|u_j\|_{\mathcal{X}^s_{\sigma,\Delta}}  \|v\|_{\mathcal{Y}^{-s}_{-\sigma,\Delta}}.
	\end{align*}
By density argument, we can assume that the support sets of
	$\mathscr{F}_x u_j(t,\xi),\, \mathscr{F}_x v(t,\xi)$ contained in $[0,\infty)$ are compact. 	
\begin{align*}
		\mathcal{J} &: =  \int_{\mathbb{R}^2}  u_1u_2\partial_x u^*_3  \bar{v} dxdt\\
		& = \int_{\mathbb{R}^2}  u_1u_2\partial_x u^*_3  \overline{P_{< 1 } v} dxdt + \int_{\mathbb{R}^2}  u_1u_2 \partial_x u^*_3  \overline{P_{\geq 1} v} dxdt ;=  \mathcal{J}_{lo} + \mathcal{J}_{hi}.
	\end{align*}
It is easy to see that
\begin{align}
		 |\mathcal{J}_{lo}|
		 & \leq  T \| P_{< 1 }  (u_1u_2\partial_x u^*_3)  \|_{L^\infty_t L^2_x}  \|\overline{P_{< 1 } v} \|_{L^\infty_t L^2_x} \nonumber\\
 & \lesssim  T \prod^{3}_{i=1}\| P_{< 1 }  u_i \|_{L^\infty_t L^2_x}  \| P_{< 1 } v\|_{L^\infty_t L^2_x} \nonumber \\
  & \lesssim  T \prod^{3}_{i=1}\|u_i \|_{\mathcal{X}^s_{\sigma,\Delta}}  \|v\|_{\mathcal{Y}^{-s}_{-\sigma,\Delta}}. \label{dLow}
	\end{align}
Using the same way as in Proposition \ref{nonlinearpart},  we can rewrite $\mathcal{J}_{hi}$ as
\begin{align*}
\int_{[0,\infty)^3\times\mathbb{R}^3} i\xi_3  \, \widehat{u}_1(\xi_1,\tau_1) \widehat{u}_2(\xi_2,\tau_2) \overline{\widehat{u}}_3 (\xi_3, - \tau_3) \overline{\widehat{P_{\geq 1} v}} (\xi_1+\xi_2+\xi_3 , \tau_1+\tau_2+\tau_3)   d\xi_1d\xi_2d\xi_3 d\tau_1d\tau_2d\tau_3.
	\end{align*}
If there is no confusion, $ [0,\infty)^3\times\mathbb{R}^3$ and $d\xi_1d\xi_2d\xi_3 d\tau_1d\tau_2d\tau_3$ will be omitted in the expression of $\mathcal{J}_{hi}$. Noticing that ${\rm supp} \mathscr{F}_x{u}_j, \mathscr{F}_x{v} \subset [0,\infty)$ we can rewrite $\mathcal{J}_{hi}$ as
	\begin{align*}
\mathcal{J}_{hi}
		  = \sum_{N\in {  2^{\mathbb{N}}}}\int   i \xi_3  \, \widehat{P_{< N }u}_1(\xi_1,\tau_1) \widehat{P_{< N } u}_2(\xi_2,\tau_2) \overline{\widehat{P_{< N } u}}_3 (\xi_3, - \tau_3) \overline{\widehat{P_N v}} (\xi , \tau ),
	\end{align*}
where we write $\xi_1+\xi_2+\xi_3= \xi$ and $ \tau_1+\tau_2+\tau_3 = \tau$ in $\mathcal{J}_{hi}$. Recall that
\begin{align} \label{dhighestmodulation}
  \sum_{i=1,2} |\xi^2_i +\tau_i|+ |\xi^2_3-\tau_3| + |\xi^2+\tau| \geq  2 (\xi_1+\xi_3)(\xi_2+\xi_3).
\end{align}
According to the lower bound of the highest modulation and noticing that $\xi=\xi_1+\xi_2+\xi_3$, one can decompose
	\begin{align*}
		\mathcal{J}_{hi}
		  = & \sum_{N\in {  2^{\mathbb{N}}}}\int  i \xi_3  \, \widehat{2^{s |\nabla|} P_{< N }u}_1(\xi_1,\tau_1) \widehat{2^{s |\nabla|} P_{< N } u}_2(\xi_2,\tau_2) \overline{2^{s |\nabla|} \widehat{P_{2^{-3}N\leq \cdot < N } u}}_3 (\xi_3, -\tau_3) \overline{\widehat{2^{-s |\nabla|} P_N v}} (\xi, \tau)  \\
& + \sum_{N\in {  2^{\mathbb{N}}}}\int  i \xi_3  \, \widehat{2^{s |\nabla|} P_{< N }u}_1(\xi_1,\tau_1) \widehat{2^{s |\nabla|} P_{< N } u}_2(\xi_2,\tau_2) \overline{2^{s |\nabla|} \widehat{P_{\leq  2^{-3}N} u}}_3 (\xi_3, -\tau_3) \overline{2^{-s |\nabla|} \widehat{P_N v}} (\xi, \tau)\\
: = & \mathcal{J}_1 +\mathcal{J}_2.
	\end{align*}

{\it Step 1.} We  estimate $\mathcal{J}_1$. By \eqref{dhighestmodulation}, the highest modulation of  ${P_{< N }u}_1$, ${P_{< N } u}_2$, $P_{2^{-3}N\leq \cdot < N } u_3$,  ${P_N v} $ satisfies
\begin{align} \label{dhighestmodulation1}
   \sum_{i=1,2} |\xi^2_i +\tau_i|+ |\xi^2_3-\tau_3| + |\xi^2+\tau| \gtrsim N^2 .
\end{align}
First, we consider the case that ${P_{< N }u}_1$ has the highest modulation, i.e., $|\xi^2_1 +\tau_1|\geq N^2$.
By H\"older's inequality, we have
\begin{align}
|\mathcal{J}_1|
		  & \leq    \sum_{N\in { 2^{\mathbb{N}}}}  N  \|2^{s |\nabla|} P_{< N }u_1\|_{L^2_{x,t}}  \|2^{s |\nabla|} P_{< N } u_2\|_{L^4_tL^\infty_x}  \nonumber\\
&  \ \ \ \ \ \ \ \ \ \  \times \| 2^{s|\nabla|}   P_{2^{-3}N\leq \cdot < N } u_3 \|_{L^4_tL^\infty_x}  \|2^{-s |\nabla|} P_N v \|_{L^\infty_tL^2_x}. \label{dL1est1}
\end{align}
Notice that $\sigma >0$.  Using Bernstein's inequality, Proposition \ref{strichartz} and $U^2_\Delta \subset U^{4+}_\Delta $, for any $u $ with $u(t)=0$ for $t>T$, $0<\varepsilon <\sigma$,  one has that
\begin{align}
 \| P_{< N } u \|_{L^4_tL^\infty_x}
 & \lesssim  \sum_{0\leq k \leq N, \ k \ {\rm dyadic}}  \| P_{k} u\|_{L^4_t L^{\infty}_x}   \nonumber\\
 & \lesssim  T^{\varepsilon/2}  \sum_{0\leq k \leq N, \ k \ {\rm dyadic}}   \langle k \rangle^{\varepsilon-\sigma} \| \langle\nabla\rangle^{\sigma} P_{k} u\|_{L^{4/(1-2\varepsilon)}_t L^{1/\varepsilon}_x} \nonumber \\
 & \lesssim   T^{\varepsilon/2}  \left(\sum_{k\in 2^{\mathbb{N}}\cup \{0\}}  \langle k\rangle^{2\sigma}\| P_k u\|^2_{U^{4/(1-2\varepsilon)} _\Delta} \right)^{1/2}\nonumber\\
 & \lesssim   T^{\varepsilon/2}  \left(\sum_{k\in 2^{\mathbb{N}}\cup \{0\}}  \langle k\rangle^{2\sigma}\| P_k u\|^2_{U^{2} _\Delta} \right)^{1/2}.
 \label{dL1est2}
\end{align}
Taking $u=2^{s |\nabla|}  u_2 $ in \eqref{dL1est2}, we have
\begin{align}
 \| P_{< N } u \|_{L^4_tL^\infty_x}
 & \lesssim   T^{\varepsilon/2}  \|u_2\|_{\mathcal{X}^s_{\sigma,\Delta}}.
 \label{dL1est2a}
\end{align}
In view of the dispersion modulation decay estimate \eqref{dispersiondecay} and \eqref{dhighestmodulation1},
\begin{align}
  \|2^{s |\nabla|} P_{< N }u_1\|_{L^2_{x,t}}   \leq  N^{-1} \|2^{s |\nabla|}   u_1\|_{V^2_{\Delta}} \leq N^{-1} \|u_1\|_{\mathcal{X}^s_{\sigma,\Delta}}.  \label{dL1est3}
\end{align}
Inserting the estimates of \eqref{dL1est2a} and \eqref{dL1est3} into \eqref{dL1est1}, we have
\begin{align}
|\mathcal{J}_1|
		  & \lesssim T^{\varepsilon/2}  \|u_1\|_{\mathcal{X}^s_{\sigma,\Delta}}  \|u_2\|_{\mathcal{X}^s_{\sigma,\Delta}}  \nonumber\\
&  \ \ \ \ \ \ \    \times \sum_{N\in {\{0\}\cup 2^{\mathbb{N}}}}      \| 2^{s|\nabla|}    P_{2^{-3}N\leq \cdot < N }  u_3 \|_{U^2_\Delta}  \|2^{-s |\nabla|}   P_N v \|_{L^\infty_tL^2_x}.
 \label{dL1est4}
\end{align}
Using the embedding $  V^2_{-,rc} \subset L^\infty_tL^2_x$ and Cauchy-Schwarz inequality, it follows from \eqref{dL1est4} that
 \begin{align}
|\mathcal{J}_1|
  & \leq T^\varepsilon    \prod^3_{i=1}\|  u_i\|_{\mathcal{X}^s_{\sigma, \Delta, T}}     \| v\|_{\mathcal{Y}^{-s}_{-\sigma, \Delta, T} } \label{dL1est5}
\end{align}
If $ P_{ < N } u_2$ or $ P_{2^{-3}N\leq \cdot < N } u_3$ has the highest modulation, we can use the same way as above  to obtain \eqref{dL1est5}. When $P_N v$ has the highest modulation. one need to use the highest modulation of  $P_N v$, $2^{-s|\nabla|} P_N v$ should be equipped with $L^2_{x,t}$ norm. We have
\begin{align}
|\mathcal{J}_1|
		  & \leq    \sum_{N\in { 2^{\mathbb{N}}}}  N  \|2^{s |\nabla|} P_{< N }u_1\|_{ L^4_tL^\infty_x }  \|2^{s |\nabla|} P_{< N } u_2\|_{L^4_tL^\infty_x}  \nonumber\\
&  \ \ \ \ \ \ \ \ \ \  \times \| 2^{s|\nabla|}   P_{2^{-3}N\leq \cdot < N } u_3 \|_{L^\infty_tL^2_x}  \|2^{-s |\nabla|} P_N v \|_{L^2_tL^2_x}, \label{dL1est1aa}
\end{align}
we can use similar way as above to get  \eqref{dL1est5}.

{\it Step 2.} We consider the estimate of $\mathcal{J}_2$. Let us decompose $\mathcal{J}_2$ by
	\begin{align*}
\mathcal{J}_{2} = &  \sum_{N\in {  2^{\mathbb{N}}}}\int i\xi_3 \widehat{P_{< N }u}_1(\xi_1,\tau_1) \widehat{P_{\leq 2^{-3}N} u}_2(\xi_2,\tau_2) \overline{\widehat{P_{\leq  2^{-3}N} u}}_3 (\xi_3, -\tau_3) \overline{\widehat{P_N v}} (\xi, \tau)\\
&+  \sum_{N\in {  2^{\mathbb{N}}}}\int i\xi_3 \widehat{P_{< N }u}_1(\xi_1,\tau_1) \widehat{P_{2^{-3} N\leq \cdot< N } u}_2(\xi_2,\tau_2) \overline{\widehat{P_{\leq  2^{-3}N} u}}_3 (\xi_3, -\tau_3) \overline{\widehat{P_N v}} (\xi, \tau)\\
: = & \mathcal{J}_{21} +\mathcal{J}_{22}.
	\end{align*}
In $\mathcal{J}_{21}$,  $\xi =\xi_1+\xi_2+ \xi_3$ implies that $\xi_1\geq 2^{-2}N$, so one can rewrite $\mathcal{J}_{21}$ as
\begin{align*}
\mathcal{J}_{21} = &  \sum_{N\in {  2^{\mathbb{N}}}}\int i\xi_3 \widehat{P_{2^{-2}N\leq \cdot < N }u}_1(\xi_1,\tau_1) \widehat{P_{\leq 2^{-3}N} u}_2(\xi_2,\tau_2) \overline{\widehat{P_{\leq  2^{-3}N} u}}_3 (\xi_3, -\tau_3) \overline{\widehat{P_N v}} (\xi, \tau)\\
= &  \sum_{N\in {  2^{\mathbb{N}}}}\int i\xi_3 2^{s\xi_1}\widehat{P_{2^{-2}N\leq \cdot < N }u}_1(\xi_1,\tau_1)  2^{s\xi_2}\widehat{P_{\leq 2^{-3}N} u}_2(\xi_2,\tau_2) 2^{s\xi_3} \overline{\widehat{P_{\leq  2^{-3}N} u}}_3 (\xi_3, -\tau_3)  2^{-s\xi }\overline{\widehat{P_N v}} (\xi, \tau).
	\end{align*}
Applying H\"older's inequality and  Lemma \ref{Bilinear}, we have
\begin{align*}
|\mathcal{J}_{21}|  &  \leq   \sum_{N\in    2^{\mathbb{N} }}  \| 2^{s|\nabla|} P_{2^{-2}N\leq \cdot < N }u_1   2^{s|\nabla|}  P_{\leq 2^{-3}N} u_2 \|_{L^2_{x,t}} \| 2^{s|\nabla|}   P_{\leq  2^{-3}N} \partial_x u_3 (-\cdot,\cdot)   2^{-s|\nabla|}  P_N v \|_{L^2_{x,t}} \\
 &  \leq  T^{\varepsilon/2} \sum_{N\in    2^{\mathbb{N} }} N^{-1+2\varepsilon}  \| 2^{s|\nabla|} P_{2^{-2}N\leq \cdot < N }u_1 \|_{V^2_\Delta}  \|2^{s|\nabla|}  P_{\leq 2^{-3}N} u_2 \|_{V^2_\Delta} \\
  & \ \ \ \  \times \| 2^{s|\nabla|}   P_{\leq  2^{-3}N} \partial_x u_3 (-\cdot,\cdot)\|_{V^2_\Delta}  \| 2^{-s|\nabla|}  P_N v \|_{V^2_\Delta}.
 \end{align*}
 Using the definition of $V^2$ and the Cauchy-Schwarz inequality, we have
 \begin{align}
 \| 2^{s|\nabla|}   P_{\leq  2^{-3}N} \partial_x u_3 (-\cdot,\cdot)\|_{V^2_\Delta}
 & \lesssim  \sum_{0\leq k \leq 2^{-3}N, \ k \ {\rm dyadic}} \langle k \rangle  \|2^{s|\nabla|} P_{k} u_3 \|_{V^2_\Delta}   \nonumber\\
 & \lesssim  \langle N \rangle^{1-\sigma}  \left(\sum_{k\in 2^{\mathbb{N}}\cup \{0\}}  \langle k\rangle^{2\sigma}\| P_k u\|^2_{U^{2}_\Delta} \right)^{1/2}   \nonumber\\
 & \lesssim  \langle N \rangle^{1-\sigma}  \|u_3\|_{\mathcal{X}^s_{\sigma, \Delta, T}} ,
 \label{ddL1est2} \\
 \|2^{s|\nabla|}  P_{\leq 2^{-3}N} u_2 \|_{V^2_\Delta}  & \lesssim     \|u_2 \|_{\mathcal{X}^s_{\sigma, \Delta, T}}.
 \label{dddL1est2}
\end{align}
 For $2\varepsilon-\sigma  <0$,
 \begin{align*}
|\mathcal{J}_{21}|    &  \lesssim  T^{\varepsilon/2}  \|u_2 \|_{\mathcal{X}^s_{\sigma, \Delta, T}}  \|u_3 \|_{\mathcal{X}^s_{\sigma, \Delta, T}} \sum_{N\in    2^{\mathbb{N} }} N^{ 2\varepsilon- \sigma }  \| 2^{s|\nabla|}\langle \nabla\rangle^{\sigma}  u_1 \|_{V^2_\Delta}
     \| 2^{-s|\nabla|} \langle \nabla\rangle^{-\sigma}   v \|_{V^2_\Delta} \\
  &  \lesssim  T^{\varepsilon/2}
   \prod^3_{i=1}\|  u_i\|_{\mathcal{X}^s_{\sigma, \Delta, T}}     \| v\|_{\mathcal{Y}^{-s}_{-\sigma, \Delta, T} }.
	\end{align*}

For the estimate of $\mathcal{J}_{22}$, one can rewrite it as
\begin{align*}
\mathcal{J}_{22} = & \sum_{N\in {  2^{\mathbb{N}}}}\int i\xi_3 \widehat{P_{2^{-4}N \leq \cdot < N }u}_1(\xi_1,\tau_1) \widehat{P_{2^{-3} N\leq \cdot< N } u}_2(\xi_2,\tau_2) \overline{\widehat{P_{\leq  2^{-3}N} u}}_3 (\xi_3, -\tau_3) \overline{\widehat{P_N v}} (\xi, \tau) \\
& + \sum_{N\in {  2^{\mathbb{N}}}}\int i\xi_3 \widehat{P_{ \leq 2^{-4}N }u}_1(\xi_1,\tau_1) \widehat{P_{2^{-3} N\leq \cdot< N } u}_2(\xi_2,\tau_2) \overline{\widehat{P_{\leq  2^{-3}N} u}}_3 (\xi_3, -\tau_3) \overline{\widehat{P_N v}} (\xi, \tau)\\
:=& \mathcal{J}_{221}  +  \mathcal{J}_{222}.
\end{align*}
In $\mathcal{J}_{221}$, the highest modulations of $  P_{2^{-4}N \leq \cdot < N }u_1$, $ P_{2^{-3} N\leq \cdot< N } u_2$,
$ P_{\leq  2^{-3}N} u_3$ and $ P_N v$ satisfy
$$
\sum_{i=1,2} |\xi^2_i +\tau_i| + |\xi^2_3 -\tau_3| + |\xi^2+\tau| \geq \xi_1\xi_2  \gtrsim N^2 .
$$
So, we can repeat the procedure as in Step 1 to obtain that
$$
|\mathcal{J}_{221}| \lesssim  T^{\varepsilon}
   \prod^3_{i=1}\|  u_i\|_{X^s_{\sigma, \Delta, T}}     \| v\|_{Y^{-s}_{-\sigma, \Delta, T} }.
$$
Noticing that $\mathcal{J}_{222}$ has similar structure to $\mathcal{J}_{21}$ and we can use Lemma \ref{Bilinear} to obtain that
$$
|\mathcal{J}_{222}| \lesssim  T^{\varepsilon/2}
   \prod^3_{i=1}\|  u_i\|_{X^s_{\sigma, \Delta, T}}     \| v\|_{Y^{-s}_{-\sigma, \Delta, T} }.
$$
So, we obtain that
\begin{align} \label{dL2est}
|\mathcal{J}_{22}| \leq |\mathcal{J}_{221}| + |\mathcal{J}_{222}| \lesssim  T^{\varepsilon/2}
   \prod^3_{i=1}\|  u_i\|_{X^s_{\sigma, \Delta, T}}     \| v\|_{Y^{-s}_{-\sigma, \Delta, T} }.
\end{align}
Summarizing the estimates of $\mathcal{J}_{21}$ and $\mathcal{J}_{22}$, we have
\begin{align} \label{dL2est1}
|\mathcal{J}_{2}|    \lesssim  T^{\varepsilon/2}
   \prod^3_{i=1}\|  u_i\|_{X^s_{\sigma, \Delta, T}}     \| v\|_{Y^{-s}_{-\sigma, \Delta, T} }.
\end{align}
By \eqref{dLow}, \eqref{dL1est5}, \eqref{dL2est1} and duality, we have the result for the case $T< 1 $, as desired. If $T>1$, it suffices to use that $T^{1/4} \sim (T^{\varepsilon/4} +T^{1/4})$  in Lemma \ref{Bilinear}.
	\end{proof}

\subsection{Proof of Theorem \ref{mainresult2}}

	\begin{proof}[\textit{Proof of Theorem \ref{mainresult2}}]
		First, we show the existence and uniqueness of the local solutions for initial data in $E^s_\sigma$.  Let us consider the mapping
		\begin{align*}
			\mathscr{T}: u\mapsto \chi_{[0,\infty)}(t)e^{it\Delta} u_0 -i \int_0^t e^{i(t-\tau)\Delta}\chi_{[0,T)} \left(\alpha  u^2 \partial_x u^* + \frac{1}{2} u^3 (u^*)^2 \right)(\tau)d\tau.
		\end{align*}
By \eqref{Inclusion}, Propositions \ref{linearpart}, \ref{dquintilinear} and \ref{dnonlinearpart}, we have for $0<T<1$,
$$
\|\mathscr{T}u\|_{\mathcal{X}^s_{\sigma, \Delta}} \lesssim \|u_0\|_{E^s_\sigma }+T^{\varepsilon/2} (\|u\|^3_{\mathcal{X}^s_{\sigma, \Delta}}+ \|u\|^5_{\mathcal{X}^s_{\sigma, \Delta}})
$$
and
$$
\|\mathscr{T}u-\mathscr{T} v\|_{\mathcal{X}^s_{\sigma, \Delta}}\lesssim T^{\varepsilon/2} (\|u\|_{\mathcal{X}^s_{\sigma, \Delta}}^2+\|v\|_{\mathcal{X}^s_{\sigma, \Delta}}^2 +\|u\|^4_{\mathcal{X}^s_{\sigma, \Delta}} +\|v\|^4_{\mathcal{X}^s_{\sigma, \Delta}} )\|u-v\|_{\mathcal{X}^s_{\sigma, \Delta}}.
$$
Applying the contraction mapping principle, we can obtain that
		\begin{align} \label{dNNLSINT}
			  u(t) = \chi_{[0,\infty)}(t)e^{it\Delta} u_0 -i\int_0^t e^{i(t-\tau)\Delta} \chi_{[0,T)}\left(\alpha  u^2 \partial_x u^* + \frac{1}{2} u^3 (u^*)^2 \right)(\tau)d\tau.
		\end{align}
has a unique solution  $u\in \mathcal{X}^s_{\sigma, \Delta}$ if
\begin{align}
T = \frac{1}{(2C(\|u_0\|^2_{E^s_\sigma}+ \|u_0\|^4_{E^s_\sigma}))^{2/\varepsilon}},  \label{dTlocal}
\end{align}
and the solution satisfies
\begin{align}
\|u\|_{L^\infty(0, T ; E^s_\sigma) \cap \mathcal{X}^s_{\sigma, \Delta}} \leq  2C \|u_0\|_{E^s_\sigma}.   \label{dTlocal2}
\end{align}
If $\|u_0\|_{E^s_\sigma}\ll 1$, we can replace $T^{\varepsilon/2}$ by $T$ in the above arguments and let
\begin{align}
T =  \frac{1}{ 4C\|u_0\|^2_{E^s_\sigma} } >1.  \label{dTlocalsmall}
\end{align}
By Propositions \ref{dquintilinear} and \ref{dnonlinearpart}, we have
$$
\|\mathscr{T}u\|_{\mathcal{X}^s_{\sigma, \Delta}} \lesssim \|u_0\|_{E^s_\sigma }+T ( \|u\|^3_{\mathcal{X}^s_{\sigma, \Delta}} +\|u\|^5_{\mathcal{X}^s_{\sigma, \Delta}}).
$$
Applying the same way as above we can get the local solution $u\in \mathcal{X}^s_{\sigma, \Delta}$.

Next, we consider the global existence and uniqueness of the solutions. Recall that $\mathrm{supp}~\widehat{u}_0\subset [\varepsilon_0,\infty)$ for some $\varepsilon_0 >0$. Denote $u_{0,\lambda}(x) = \lambda^{1/2} u_0(\lambda x)$.
		Following Lemma \ref{scaling},  we have
\begin{align*}
			 \|u_{0,\lambda}\|_{E^{s}_\sigma }  \leq \lambda^{\sigma} 2^{s \lambda \varepsilon_0/2} \|u_0 \|_{E^{s}_\sigma } \leq 2^{s \lambda \varepsilon_0/4} , \ \ \lambda \geq \lambda_0:= \lambda_0 (\sigma, \varepsilon_0, \|u_0\|_{E^{s}_\sigma }).
\end{align*}
By the local well-posedness, we see that NdNLS \eqref{dNNLSINT} with initial data $u_{0,\lambda}$  has a unique solution
$$
u_\lambda \in L^\infty(0, T_\lambda; E^s_\sigma ) \cap  \mathcal{X}^s_{\sigma, \Delta} ,  \ \ T_\lambda =   1/4C \|u_{0,\lambda} \|^2_{E^{s}_\sigma } ,
$$
$$
\|u_\lambda\|_{L^\infty(0, T_\lambda; E^s_\sigma) \cap  \mathcal{X}^s_{\sigma, \Delta} }  \leq  2 C  \|u_{0,\lambda} \|_{E^{s}_\sigma } .
$$
By choosing $\lambda \geq \lambda_1:= \lambda_1(\sigma, \varepsilon_0,  \|u_0\|_{E^{s}_\sigma })$, we see that
$$
T_\lambda \geq 2^{\sqrt{\lambda}}, \ \ \  \|u_\lambda\|_{L^\infty(0, T_\lambda; E^s_\sigma) \cap \mathcal{X}^s_{\sigma, \Delta} }  \leq  2^{- \sqrt{\lambda}}.
$$
Then, one sees that
\begin{align} \label{scalinguu}
 u(t, x) = \lambda^{-1/2} u_\lambda(\lambda^{-2}  t,\lambda^{-1}  x)
\end{align}
is the solution of \eqref{dNNLSINT} with initial data $u_{0}$. By Lemma \ref{scaling}, for any $t\leq  \lambda^2  2^{ \sqrt{\lambda}  } $ and $\lambda\geq \lambda_1$,
$$
\| u(t)\|_{E^{s\lambda}_\sigma } = \lambda^{-1/2} \|u_\lambda(\lambda^{-2}  t,\lambda^{-1} \cdot )\|_{E^{s\lambda}_\sigma }  \leq  C  2^{ - \sqrt{\lambda } } .
 $$
Further, we can show that $u\in X^{s\lambda}_{\sigma, \Delta}$ if $\lambda\geq \lambda_1$.

\begin{lemma} Let $s\leq 0, \, \sigma>0$. Let $ u_\lambda (t,x) =u(\lambda^2t, \lambda x)$. We have
$$
\|u\|_{\mathcal{X}^{s\lambda}_{\sigma,\Delta}} \leq  \lambda^{1/2  } (\ln \lambda) \|u_\lambda\|_{\mathcal{X}^{s}_{\sigma,\Delta}}, \ \ \lambda \gg 1.
$$

\end{lemma}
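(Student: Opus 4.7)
The plan is to proceed by duality, adapting the argument of Lemma~\ref{scalingxssigma} to the Besov-type structure of $\mathcal{X}^s_{\sigma,\Delta}$ and $\mathcal{Y}^s_{\sigma,\Delta}$. First I would record two scaling identities that follow from the same change of variables used in the proof of Lemma~\ref{scalingxssigma}: the bilinear pairing $B$ of Proposition~\ref{Updual} satisfies $B(u,v)=\lambda\,B(u_\lambda,v_\lambda)$, and the $V^2_\Delta$-norm scales by $\|f_\lambda\|_{V^2_\Delta}=\lambda^{-1/2}\|f\|_{V^2_\Delta}$. Combining the first identity with the duality~\eqref{ddualuse} for the pair $(\mathcal{X}^{s\lambda}_{\sigma,\Delta},\mathcal{Y}^{-s\lambda}_{-\sigma,\Delta})$ reduces the lemma to proving the dual estimate
\[
\|v_\lambda\|_{\mathcal{Y}^{-s}_{-\sigma,\Delta}}\;\lesssim\;\lambda^{-1/2}(\ln\lambda)\,\|v\|_{\mathcal{Y}^{-s\lambda}_{-\sigma,\Delta}},\qquad \lambda\gg 1.
\]

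Second, I would apply the Fourier-side intertwining identity obtained via $\xi=\lambda\eta$,
\[
2^{-s|\nabla|}P_N v_\lambda\;=\;\bigl[2^{-s\lambda|\nabla|}P_{I_N} v\bigr]_\lambda,\qquad I_N := \bigl[N/(2\lambda),\,N/\lambda\bigr),\ \ N\in 2^{\mathbb{N}},
\]
with the analogous low-frequency formula $P_0 v_\lambda = [P_{<1/\lambda} v]_\lambda$. Each $I_N$ meets at most two dyadic shells $[M/2,M)$ of $v$, so boundedness of frequency projections on $V^2_\Delta$ together with the scaling of $V^2_\Delta$ yields
\[
\|2^{-s|\nabla|}P_N v_\lambda\|_{V^2_\Delta}\;\le\;\lambda^{-1/2}\!\!\sum_{M:\,[M/2,M)\cap I_N\neq\varnothing}\!\!\|2^{-s\lambda|\nabla|} P_M v\|_{V^2_\Delta}.
\]
Inserting these inequalities into the defining sum of $\|v_\lambda\|^2_{\mathcal{Y}^{-s}_{-\sigma,\Delta}}$ and swapping summations, the high-frequency regime $N\gtrsim\lambda$ yields a contribution bounded by $\lambda^{-1}\|v\|^2_{\mathcal{Y}^{-s\lambda}_{-\sigma,\Delta}}$, since each $M$ corresponds to only $O(1)$ indices $N$ and the weights $\langle N\rangle^{-2\sigma}\simeq\lambda^{-2\sigma}\langle M\rangle^{-2\sigma}$ pair up correctly.

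The main obstacle is the low-frequency block: the index $N=0$, together with every dyadic $N\le\lambda$ for which $I_N\subset[0,1)$, all collapse into the single coarse projection $P_{<1/\lambda}v$, which is itself captured by the $\lesssim\ln\lambda$ dyadic shells $P_M v$ with $M\le 2\lambda$. A Cauchy--Schwarz inequality across these $\lesssim\ln\lambda$ shells produces the single factor $\ln\lambda$ appearing in the stated bound. Adding the high- and low-frequency contributions and taking the square root yields the dual estimate, and the duality step in the first paragraph then closes the argument. The delicate point is precisely this low-frequency matching: unlike in Lemma~\ref{scalingxssigma}, the Besov sum here interacts with the coarsest dyadic block of $v_\lambda$, which absorbs all frequencies of $v$ lying below $\lambda$, and that is the origin of the logarithmic loss.
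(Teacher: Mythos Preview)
Your approach is the same as the paper's: reduce by duality to the estimate $\|v_\lambda\|_{\mathcal{Y}^{-s}_{-\sigma,\Delta}}\lesssim\lambda^{-1/2}(\ln\lambda)\,\|v\|_{\mathcal{Y}^{-s\lambda}_{-\sigma,\Delta}}$, use the scaling identity $\|2^{-s|\nabla|}P_N v_\lambda\|_{V^2_\Delta}^2=\lambda^{-1}\|2^{-s\lambda|\nabla|}P_{N/\lambda}v\|_{V^2_\Delta}^2$, and split at $N\sim\lambda$; the high-frequency part is handled exactly as you describe.

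Your low-frequency bookkeeping is garbled, however. For dyadic $N\le\lambda$ the interval $I_N=[N/(2\lambda),N/\lambda)$ lies in $[0,1)$, not in $[0,1/\lambda)$, so these pieces do \emph{not} ``collapse into the single coarse projection $P_{<1/\lambda}v$''; and $P_{<1}v$ is captured by the \emph{one} shell $P_0v$, not by ``$\lesssim\ln\lambda$ dyadic shells $P_Mv$ with $M\le 2\lambda$''. The logarithm has nothing to do with Cauchy--Schwarz over $M$-shells: it appears simply because the sum $\sum_{N\le\lambda}\langle N\rangle^{-2\sigma}\|2^{-s|\nabla|}P_N v_\lambda\|_{V^2_\Delta}^2$ contains $\sim\ln\lambda$ terms, each bounded by $\lambda^{-1}\|2^{-s\lambda|\nabla|}P_0 v\|_{V^2_\Delta}^2$. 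This is precisely what the paper does, yielding $I\lesssim\lambda^{-1}(\ln\lambda)\|2^{-s\lambda|\nabla|}P_0 v\|_{V^2_\Delta}^2$ directly.
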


\begin{proof}
 In view of $B_{\mathfrak{t}}(f_{\lambda^{-1}}, \, v) = \lambda  B_{\lambda^{-2}\mathfrak{t}}(f, \, v_\lambda) $, we see that
$$
B (f_{\lambda^{-1}}, \, v) = \lambda  B (f, \, v_\lambda).
$$
It follows that
$$
|B (f_{\lambda^{-1}}, \, v) | \leq  \lambda  \|f\|_{\mathcal{X}^{s}_{\sigma,\Delta}} \| v_\lambda\|_{\mathcal{Y}^{-s}_{-\sigma,\Delta}}
$$
Using the definition of the norm on $\mathcal{Y}^{-s}_{-\sigma,\Delta}$,  we have
 \begin{align*}
 \|v_\lambda\|^2_{\mathcal{Y}^{-s}_{-\sigma,\Delta}}: = \left( \sum_{N\leq \lambda}  +  \sum_{N >\lambda}\right) \langle N\rangle^{-2\sigma} \| 2^{-s|\nabla|} P_N  v_\lambda \|^2_{V^2_{\Delta}} := I+II .
\end{align*}
In view of the definition of $V^2_\Delta$,
 \begin{align*}
\|   2^{-s|\nabla|} P_N  v_\lambda \|^2_{V^2_{\Delta}}
& =  \lambda^{-2} \| \chi_{[N/2,N]}(\xi)  2^{- s|\xi|}  e^{-\mathrm{i} t|\xi|^2} \widehat{v(\lambda^2 t)} (\lambda^{-1} \xi )\|^2_{V^2} \\
&= \lambda^{-1} \| 2^{-s\lambda |\nabla|} P_{N/\lambda}  v \|^2_{V^2_{\Delta}}.
\end{align*}
It follows that
\begin{align*}
II & \leq    \lambda^{-1}\sum_{N >\lambda}  \langle N/\lambda \rangle^{-2\sigma} \| 2^{-s\lambda|\nabla|} P_{N/\lambda}  v \|^2_{V^2_{\Delta}}  \lesssim   \lambda^{-1}\| v \|^2_{\mathcal{Y}^{-s\lambda}_{-\sigma,\Delta}},\\
I & \leq    \lambda^{-1}\sum_{N \leq \lambda}    \| 2^{-s\lambda|\nabla|} P_{N/\lambda}  v \|^2_{V^2_{\Delta}}  \lesssim \lambda^{-1} (\ln \lambda) \| 2^{-s\lambda|\nabla|} P_{0}  v \|^2_{V^2_{\Delta}}.
\end{align*}
Hence, we have
\begin{align*}
\| v_\lambda\|_{\mathcal{Y}^{-s}_{\sigma,\Delta}} \lesssim \lambda^{-1/2} (\ln \lambda )   \| v \|_{\mathcal{Y}^{-s\lambda}_{-\sigma,\Delta}}.
\end{align*}
By duality, we get the result.
\end{proof}

Now, taking $\lambda =j \in \mathbb{N}$ and $j\geq \lambda_1 $, we obtain a unique solution of NdNLS \eqref{dNNLSINT}
$
u\in L^\infty(0,   2^{\sqrt{ j}} ; E^{sj}_\sigma ) \cap \mathcal{X}^{sj}_{\sigma, \Delta, 2^{\sqrt{j}} }
$
satisfying
$$
\| u \|_{L^\infty(0,   2^{ \sqrt{j} } ; E^{sj}_\sigma) \cap \mathcal{X}^{sj}_{\sigma, \Delta , 2^{\sqrt{j}} }}   \leq     2^{-\sqrt{j}}.
 $$
The result follows.
 \end{proof}

\subsection{Boundness of gauge transform in $\mathcal{X}^{s}_{\sigma, \Delta }$ and proof of Corollary \ref{mainresult3}}

 \begin{prop} \label{xBDgauge}
Let $s\leq 0$, $\sigma >0$. Suppose that $u\in \mathcal{X}^s_{\sigma,\Delta}$ satisfying  $u(t) = 0 $ for $t<0$ or $t\geq T$,  ${\rm supp} \widehat{u} (\xi) \subset \{\xi: \ \xi\geq \varepsilon_0\}$ for some $\varepsilon_0 >0$. Then we have $\mathcal{G}(u) \in \mathcal{X}^s_{\sigma,\Delta}$ and there exists $C:= C(\varepsilon_0,\sigma)>0$ such that
\begin{align}
\|\mathcal{G}(u)\|_{\mathcal{X}^s_{\sigma,\Delta}} \leq \, T \exp (C \|u\|^2_{\mathcal{X}^s_{\sigma,\Delta}}) \|u\|_{\mathcal{X}^s_{\sigma, \Delta}}. \label{xbdgauge1}
\end{align}
\end{prop}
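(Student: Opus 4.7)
My plan is to follow the strategy of Proposition \ref{BDgauge}, but upgrade the pointwise-in-time Besov algebra estimates to space-time estimates in $\mathcal{X}^s_{\sigma,\Delta}$. I would start from the Taylor expansion
$$
\mathcal{G}(u)=u+\sum_{k=1}^{\infty}\frac{(-\delta)^k}{k!}\,u\bigl(\partial_x^{-1}(uu^*)\bigr)^k,
$$
and the main analytic goal becomes the multilinear bound, for each $k\ge 1$,
$$
\bigl\|u\bigl(\partial_x^{-1}(uu^*)\bigr)^k\bigr\|_{\mathcal{X}^s_{\sigma,\Delta}}\le T\,C_{\varepsilon_0,\sigma}^k\,\|u\|_{\mathcal{X}^s_{\sigma,\Delta}}^{2k+1}.
$$
Summing over $k$ then yields $T\|u\|_{\mathcal{X}^s_{\sigma,\Delta}}(\exp(C\|u\|_{\mathcal{X}^s_{\sigma,\Delta}}^2)-1)$, and combining with the $k=0$ term $\|u\|_{\mathcal{X}^s_{\sigma,\Delta}}$ (which is absorbed into $T\exp(\cdots)\|u\|_{\mathcal{X}^s_{\sigma,\Delta}}$ whenever $T\ge 1$, the regime in which \eqref{xbdgauge1} is applied in Theorem \ref{mainresult2}) gives the desired bound.

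To prove the multilinear estimate for each $k$, I would proceed analogously to Propositions \ref{dnonlinearpart} and \ref{dquintilinear}, passing to the $U^2_\Delta/V^2_\Delta$ duality $(\mathcal{X}^s_{\sigma,\Delta})^*=\mathcal{Y}^{-s}_{-\sigma,\Delta}$ and testing against $v$ with $\|v\|_{\mathcal{Y}^{-s}_{-\sigma,\Delta}}\le 1$. Decompose the test function as $v=P_{<1}v+\sum_N P_N v$ and treat the low- and high-frequency pieces separately. The low-frequency piece is handled by H\"older in time --- which is precisely where the factor $T$ is produced --- combined with Bernstein and the pointwise Besov-algebra bound \eqref{Besovest4} already established in the proof of Proposition \ref{BDgauge}. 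The high-frequency piece, after dyadic decomposition of all factors, is treated by the bilinear estimate of Lemma \ref{Bilinear}, the dispersion-modulation decay \eqref{dispersiondecay}, and the Strichartz-in-$U^2_\Delta$ bound of the type \eqref{quintilinear1est2}. The support condition ${\rm supp}\,\widehat u\subset[\varepsilon_0,\infty)$ is decisive: it forces $\widehat{uu^*}\subset[2\varepsilon_0,\infty)$, so each $\partial_x^{-1}$ is a bounded Fourier multiplier of norm $\le 1/(2\varepsilon_0)$, which is what keeps the constants geometric in $k$ (hence of the form $C_{\varepsilon_0,\sigma}^k$) so that the series is dominated by the $1/k!$ and closes.

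The main obstacle will be the combinatorial bookkeeping in the $(2k+1)$-linear high-frequency/high-modulation estimate: the ``highest-modulation'' slot can sit in any of the $2k+2$ positions (the $2k+1$ copies of $u$/$u^*$ or the dual function $v$), and every assignment must produce constants growing only geometrically in $k$. A simplifying ingredient is the uniform pointwise bound $\|\partial_x^{-1}(uu^*)\|_{L^\infty_{t,x}}\lesssim C_{\varepsilon_0,\sigma}\|u\|^2_{\mathcal{X}^s_{\sigma,\Delta}}$ (a direct consequence of $\widehat{uu^*}\subset[2\varepsilon_0,\infty)$, Young's inequality and Bernstein), which allows one to pull out $k-1$ of the $\partial_x^{-1}(uu^*)$ factors in $L^\infty_{t,x}$-norm and thereby reduce the genuinely multilinear estimate to a three-factor version already handled in Proposition \ref{dnonlinearpart}. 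The extra $T$ (rather than $T^\varepsilon$ as in that proposition) reflects this simpler route through H\"older, and is harmless since \eqref{xbdgauge1} is only needed for $T\ge 1$.
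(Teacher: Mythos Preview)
Your overall architecture---Taylor expansion, duality against $v\in\mathcal{Y}^{-s}_{-\sigma,\Delta}$, dyadic decomposition $v=\sum_N P_N v$, and the key observation that $\|\partial_x^{-1}(uu^*)\|_{L^\infty_{t,x}}\le C_{\varepsilon_0,\sigma}\|u\|^2_{\mathcal{X}^s_{\sigma,\Delta}}$---is correct and matches the paper. However, the paper's route for the high-frequency piece is considerably more elementary than what you propose: it uses \emph{no} dispersive input whatsoever---no bilinear estimate, no modulation decay, no Strichartz, and no reduction to Proposition~\ref{dnonlinearpart}. Instead, for each $N\in 2^{\mathbb N}$ the paper observes that, since $\widehat{P_N v}$ lives at frequency $\sim N$ and all $2k+1$ input frequencies are nonnegative and sum to $\xi\sim N$, at least one of the $2k+1$ factors must carry frequency $\ge N/(2k+1)$. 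This splits $A_N$ into three symmetric cases (high frequency on the leading $u$, or on a $u$ inside some $\partial_x^{-1}$, or on a $u^*$), and each is bounded by a single application of H\"older in time on $[0,T]$ (producing the factor $T$), the $L^\infty_{t,x}$ bound on the remaining $\partial_x^{-1}(uu^*)$ factors, and pairing the high-frequency factor with $P_N 2^{-s|\nabla|}v$ in $L^\infty_t L^2_x$ via Cauchy--Schwarz in $N$. The only $k$-dependent loss is $(2k+1)^{1+\sigma}$, coming from the threshold $N/(2k+1)$ and the Bernstein-type sum; since $(2k+1)^{1+\sigma}\le C^k$, this is absorbed into the geometric constant and the series still closes against $1/k!$.

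So your plan would work, but is overengineered: the ``highest-modulation'' bookkeeping you flag as the main obstacle is not needed at all---the argument is purely frequency-based, and every piece is estimated in $L^\infty_t L^2_x$ after a crude H\"older in time. The reference to Proposition~\ref{dnonlinearpart} is also slightly off (the residual form is $u\,\partial_x^{-1}(uu^*)$, not $u^2\partial_x u^*$), though since $\partial_x^{-1}$ is smoothing this would only make things easier. Your remark about the $k=0$ term is apt: via the duality pairing the paper also picks up a factor $T$ there, so the stated bound is the one obtained by this route.
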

\begin{proof}
By duality, it suffices to show that
\begin{align}\label{xbdgaugedual}
\left |\int_{\mathbb{R}^2} \mathcal{G}(u) \overline{v} dxdt \right|  \leq  T  \exp (C \|u\|^2_{\mathcal{X}^s_{\sigma,\Delta}}) \|u\|_{\mathcal{X}^s_{\sigma, \Delta, T}} \|v\|_{\mathcal{Y}^{-s}_{-\sigma, \Delta, T}}.
\end{align}
By Taylor's expansion,
\begin{align} \label{xtaylor}
\int_{\mathbb{R}^2} \mathcal{G}(u) \overline{v} dxdt  = \sum^\infty_{k=0} \frac{(-\delta)^k}{k!}  \int_{\mathbb{R}^2} u(\partial^{-1}_x(uu^*))^k \overline{v} dxdt .
\end{align}
We have
\begin{align}
 \int_{\mathbb{R}^2} u(\partial^{-1}_x(uu^*))^k \overline{v} dxdt  &= \sum_{N\in 2^{\mathbb{N}} \cup \{0\}}  \int_{\mathbb{R}^2} u(\partial^{-1}_x(uu^*))^k \overline{P_N v} dxdt \sum_{N\in 2^{\mathbb{N}} \cup \{0\}} A_N. \label{xbdgauge2}
\end{align}
For simply, we  write $2^{s|\nabla|} u =u_1 , \ 2^{s|\nabla|} u =u_2$. Using  \eqref{bdgauge3} and   ${\rm supp} \ \widehat{u} \subset (0,\infty)$, one has that
\begin{align}
A_N =   \int_{\mathbb{R}^2} P_{<N}u_1 (\partial^{-1}_x(P_{<N} u_1 P_{<N} u_2))^k \overline{P_N 2^{-s|\nabla|} v} dxdt
\end{align}
The estimate of $A_0$ is easy, by H\"older's
 Young's and Strichartz' inequalities,
\begin{align} \label{A0}
|A_0 | & \leq T \|P_{<1}u_1\|_{L^\infty_tL^2}\|P_{<1}2^{-s|\nabla|} v\|_{L^\infty_tL^2} \|\partial^{-1}_x(P_{<1}u_1 P_{<1}u_2) \|^k_{L^\infty_{x,t}}  \nonumber\\
& \leq T C^k_{\varepsilon_0,\sigma} \|u\|^{2k+1}_{\mathcal{X}^s_{\sigma,\Delta}} \|v\|^{2k}_{\mathcal{Y}^{-s}_{\sigma,\Delta}} .
\end{align}
Next, we estimate $A_N$ for $N\geq 2$. Noticing that
$$
\int_{\mathbb{R}^2} P_{<N/(2k+1)}u_1 (\partial^{-1}_x(P_{<N/(2k+1)} u_1 P_{<N/(2k+1)} u_2))^k \overline{P_N 2^{-s|\nabla|} v} dxdt =0,
$$
one only needs to estimate
\begin{align}
& A_{N1}:= \int_{\mathbb{R}^2} P_{ N/(2k+1)\leq \cdot <N}u_1 (\partial^{-1}_x(P_{<N} u_1 P_{<N} u_2))^k \overline{P_N 2^{-s|\nabla|} v} dxdt ,  \nonumber\\
& A_{N2}:= \int_{\mathbb{R}^2} P_{<N}u_1\partial^{-1}_x(P_{ N/(2k+1)\leq \cdot <N} u_1 P_{<N} u_2)  (\partial^{-1}_x(P_{<N} u_1 P_{<N} u_2))^{k-1} \overline{P_N 2^{-s|\nabla|} v} dxdt,   \nonumber\\
& A_{N3}:= \int_{\mathbb{R}^2} P_{<N}u_1\partial^{-1}_x(P_{<N} u_1 P_{ N/(2k+1)\leq \cdot <N} u_2)  (\partial^{-1}_x(P_{<N} u_1 P_{<N} u_2))^{k-1} \overline{P_N 2^{-s|\nabla|} v} dxdt.   \nonumber
\end{align}
In view of H\"older's inequality, and following \eqref{xbdgauge3}, we have
\begin{align}
 |A_{N1}|
& \leq \, T \| P_{ N/(2k+1)\leq \cdot <N}u_1 \|_{L^\infty_tL^2}\|P_{N}2^{-s|\nabla|} v\|_{L^\infty_tL^2} \|\partial^{-1}_x(P_{<N}u_1 P_{<N}u_2) \|^k_{L^\infty_{x,t}}  \nonumber\\
& \leq  T C^{k}_{\varepsilon_0,\sigma} \|u\|^{2k}_{\mathcal{X}^s_{\sigma,\Delta}} \| P_{ N/(2k+1)\leq \cdot <N}u_1 \|_{L^\infty_tL^2}\|P_{N}2^{-s|\nabla|} v\|_{L^\infty_tL^2}
. \label{xBesovest3}
\end{align}
By  \eqref{xBesovest3} and H\"older's inequality, we have
\begin{align}
\sum_{N\in 2^{\mathbb{N}}} |A_{N1}|
 & \lesssim  T C^{k}_{\varepsilon_0,\sigma} \|u\|^{2k}_{\mathcal{X}^s_{\sigma,\Delta}}  \|v\|_{\mathcal{Y}^{-s}_{-\sigma, \Delta}} \left(\sum_{N\in 2^{\mathbb{N}}} N^{2\sigma} \| P_{ N/(2k+1)\leq \cdot <N}u_1 \|^2_{L^\infty_tL^2}  \right)^{1/2}.  \label{xBesovest4}
\end{align}
By Young's inequality, one can estimate
\begin{align}
&  \left(\sum_{N\in 2^{\mathbb{N}}} N^{2\sigma} \| P_{ N/(2k+1)\leq \cdot <N}u_1 \|^2_{L^\infty_tL^2}  \right)^{1/2} \nonumber\\
 & \leq  \left(\sum_{N\in 2^{\mathbb{N}}} \left( \sum_{1\leq l \leq [\log^{2k+1}_2]+1 } N^{ \sigma} \| P_{2^{-l+1} N } u_1 \|_{L^\infty_tL^2} \right)^2 \right)^{1/2} \nonumber\\
 & \leq  \sum_{1\leq l \leq [\log^{2k+1}_2]+1 }    2^{l\sigma} \|u\|_{\mathcal{X}^s_{\sigma,\Delta}}\nonumber\\
 & \lesssim (2k+1)^{\sigma}   \|u\|_{\mathcal{X}^s_{\sigma,\Delta}}.
  \label{xBesovest5}
\end{align}
Summarizing the estimates of \eqref{xBesovest4} and \eqref{xBesovest5}, we have
\begin{align} \label{xBesovest6}
\sum_{N\in 2^{\mathbb{N}}} |A_{N1}| \lesssim   T C^{k}_{\varepsilon_0,\sigma} (2k+1)^{\sigma}   \|u\|^{2k+1}_{\mathcal{X}^s_{\sigma,\Delta}}  \|v\|_{\mathcal{Y}^{-s}_{-\sigma, \Delta}} .
\end{align}
Now let us estimate $A_{N2}$.
By H\"older's inequality,   we have
\begin{align}
 |A_{N2}|
\leq & \ T \| P_{ <N}u_1 \|_{L^\infty_tL^2}\|P_{N}2^{-s|\nabla|} v\|_{L^\infty_tL^2}  \|\partial^{-1}_x(P_{<N}u_1 P_{<N}u_2) \|^{k-1}_{L^\infty_{x,t}}  \nonumber\\
& \times  \|\partial^{-1}_x(P_{ N/(2k+1)\leq \cdot <N} u_1 \,  P_{<N}u_2) \|_{L^\infty_{x,t}}  \nonumber\\
\leq  & \  T C^{k-1}_{\varepsilon_0,\sigma} \|u\|^{2k-1}_{\mathcal{X}^s_{\sigma,\Delta}} \|\partial^{-1}_x(P_{ N/(2k+1)\leq \cdot <N} u_1 \,  P_{<N}u_2) \|_{L^\infty_{x,t}} \|P_{N}2^{-s|\nabla|} v\|_{L^\infty_tL^2}
. \label{xBesovest7}
\end{align}
Using Bernstein's inequality, we have
 \begin{align}
  \|\partial^{-1}_x & (P_{ N/(2k+1)\leq \cdot <N} u_1 \,  P_{<N}u_2) \|_{L^\infty_{x,t}} \nonumber\\
     &  \leq (2k+1)   \| P_{ N/(2k+1)\leq \cdot <N} u_1 \,  P_{<N}u_2  \|_{L^\infty_t L^1_x} \nonumber\\
   &  \leq (2k+1)   \| P_{ N/(2k+1)\leq \cdot <N} u_1 \|_{L^\infty_t L^2_x}   \| P_{<N}u_2  \|_{L^\infty_t L^2_x} . \label{xBesovest8}
\end{align}
Inserting the estimate \eqref{xBesovest8} into  \eqref{xBesovest7}, we have
\begin{align}
 |A_{N2}|
& \leq  T (2k+1) C^{k}_{\varepsilon_0,\sigma} \|u\|^{2k}_{\mathcal{X}^s_{\sigma,\Delta}} \| P_{ N/(2k+1)\leq \cdot <N}u_1 \|_{L^\infty_tL^2}\|P_{N}2^{-s|\nabla|} v\|_{L^\infty_tL^2}.
\label{xBesovest9}
\end{align}
Repeating the procedure as in the estimate of $\sum_{N\in 2^{\mathbb{N}}} |A_{N1}|$, we can get that
\begin{align} \label{xBesovest10}
\sum_{N\in 2^{\mathbb{N}}} |A_{N2}| \lesssim   T C^{k}_{\varepsilon_0,\sigma} (2k+1)^{\sigma+1}   \|u\|^{2k+1}_{\mathcal{X}^s_{\sigma,\Delta}}  \|v\|_{\mathcal{Y}^{-s}_{-\sigma, \Delta}} .
\end{align}
The estimate of $\sum_{N\in 2^{\mathbb{N}}} |A_{N3}|$ is the same as in \eqref{xBesovest10} and the details are omitted.  Summarizing the estimates of \eqref{A0}, \eqref{xBesovest6} and \eqref{xBesovest10},
\begin{align}
 \left|\int_{\mathbb{R}^2} u(\partial^{-1}_x(uu^*))^k \overline{v} dxdt \right| \lesssim   T C^{k}_{\varepsilon_0,\sigma} (2k+1)^{\sigma+1}   \|u\|^{2k+1}_{\mathcal{X}^s_{\sigma,\Delta}}  \|v\|_{\mathcal{Y}^{-s}_{-\sigma, \Delta}}  . \label{xbdgauge200}
\end{align}
Noticing that $(2k+1)^{1+\sigma} \leq C^k$, by \eqref{xtaylor} and  \eqref{xbdgauge200} we have
\begin{align} \label{xtaylor100}
\left|\int_{\mathbb{R}^2} \mathcal{G}(u) \overline{v} dxdt \right| \leq  T  \sum^\infty_{k=0} \frac{|C\delta|^k}{k!}   \|u\|^{2k+1}_{\mathcal{X}^s_{\sigma,\Delta}}  \|v\|_{\mathcal{Y}^{-s}_{-\sigma, \Delta}}.
\end{align}
Hence, we have \eqref{xbdgaugedual}, as desired.
\end{proof}

{\it Proof of Corollary \ref{mainresult3}.}	 By Theorem \ref{mainresult2}, we see that Eq. \eqref{aNdNLSequiv} with initial data $v_0=   u_0 \exp (  \frac{\alpha}{4} \partial^{-1}_x  (u_0  {u}_0^* )  )$ has a unique solution $v$ satisfying the results of Theorem \ref{mainresult2}. Further, in view of Propositions \ref{Equivndnls}, \ref{BDgauge} and  \ref{xBDgauge}, we have the result, as desired. $\hfill\Box$

\subsection{Ill-posedness}

Similar to the NNLS, we can show that NdNLS is ill-posed in any $E^{s'}_{\sigma'}$ if $u_0 \in E^s_{\sigma}$ ($s'\leq s<0, \, \sigma'\leq \sigma, \, \sigma >0$) does not satisfy the condition ${\rm supp} \, \widehat{u}_0 \subset [0,\infty)$, where the ill-posedness means that the solution map from $E^s_{\sigma}$ into $E^{s'}_{\sigma'}$ is not $C^3$. In fact, one can take the same initial data as for the NNLS:
 $$
 \widehat{\varphi}(\xi) = 2^{-sk/2} (\chi_{k+I}(\xi) + \chi_{-2k+2I}(\xi) ),  \ \ I =[1/8,\, 1/4], \ 2I = [1/4, 1/2], \ \ k\gg 1.
 $$
 Noticing that
 If \eqref{NNLS} has a solution $u_\varepsilon$ and the solution map $\varepsilon \varphi \to u_\varepsilon$ is $C^3$, then we have
\begin{align}
& \frac{\partial u_\varepsilon(t)}{\partial \varepsilon}|_{\varepsilon=0} =  e^{it\partial^2_x} \varphi, \ \  \frac{\partial^2 u_\varepsilon(t)}{\partial \varepsilon^2}|_{\varepsilon=0} =0, \label{dillposed1} \\
& \frac{\partial^3 u_\varepsilon(t)}{\partial\varepsilon^3 }|_{\varepsilon=0} = \mathrm{i} 6\alpha \int^t_0 e^{\mathrm{i}(t-\tau)\partial^2_x} (e^{\mathrm{i}\tau \partial^2_x} \varphi)  (e^{\mathrm{i}\tau \partial^2_x} \partial_x \varphi) (e^{\mathrm{i}\tau \partial^2_x} \varphi)^* d\tau.  \label{dillposed2}
\end{align}
Let us observe that the only difference between \eqref{illposed2} and \eqref{dillposed2} is that there is an additional $\partial_x$ in the expression of $\frac{\partial^3 u_\varepsilon(t)}{\partial\varepsilon^3 }|_{\varepsilon=0}$ in \eqref{dillposed2}, which means that
\begin{align}
& \left|\frac{\widehat{\partial^3 u_\varepsilon(t)} (\xi)}{\partial\varepsilon^3 }|_{\varepsilon=0}  \right| \nonumber  \\
& \ \ \ \gtrsim  \left| \chi_{[1/2,1]}(\xi)\int_{\mathbb{R}^2}  \frac{\xi_2(e^{\mathrm{i}2 t (\xi-\xi_1)(\xi-\xi_2)} -1) }{2(\xi-\xi_1)(\xi-\xi_2) }  \varphi(\xi_1)  \varphi(\xi_2) \varphi(\xi-\xi_1-\xi_2)   d\xi_1d\xi_2 \right|:= R_d(\xi).\label{dillposed3}
\end{align}
Namely, comparing \eqref{illposed3} with \eqref{dillposed3}, we see that there is an additional $\xi_2$ in \eqref{dillposed3}.  Since $\xi_2 \sim k$ in \eqref{dillposed3}, it will increase the lower bound of $R_d(\xi)$. So, using the same way as in Section \ref{Illposed}, we can get that the solution map is not $C^3$.

\section{General NdNLS} \label{sectgNdNLS}

The general nonlocal  derivative NLS \eqref{gNdNLS} covers NdNLS \eqref{NdNLS} and the following (cf. \cite{Zh2018})
\begin{equation*}
		 \mathrm{i} u_t + \partial^2_x u + \alpha\, \partial_x (u u^*u)   =0,  \  u(x,0) = u_0(x).
\end{equation*}
Using the nonlocal gauge transform \eqref{gauge1}, we can show that gNdNLS is equivalent to the following equation
\begin{equation}\label{gNdNLSequiv}
		 \mathrm{i} v_t + \partial^2_x v - (\alpha-\beta) \,  v^2 \partial_x v^* -\frac{\alpha^2}{2}\left(\alpha- \frac{3\beta}{2}\right) v^3 (v^*)^2=0,  \  v(x,0) = v_0(x).
\end{equation}
\begin{prop} \label{gEquivndnls}
If $u$ is a smooth solution of gNdNLS \eqref{gNdNLS}, then $v=\mathcal{G}(u)$ with $\alpha =-2\delta$ is the smooth solution of \eqref{gNdNLSequiv} with initial data $v_0= \mathcal{G}(u_0)$. Conversely, if $v$ is a smooth solution of \eqref{gNdNLSequiv}, then $u= v \mathcal{E}^{-1}(v)$ with initial data $u_0 = v_0 \mathcal{E}^{-1}(v_0)$ is a smooth solution of \eqref{gNdNLS}.
\end{prop}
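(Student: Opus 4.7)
The plan is to follow the template of Proposition \ref{Equivndnls} almost verbatim; only two ingredients need to be refreshed to absorb the extra term $\beta u^2 \partial_x u^*$ in \eqref{gNdNLS}. Since the weight in $v = u\mathcal{E}(u)$ depends on $u$ only through $uu^*$ and one still has $uu^* = vv^*$, the identities \eqref{gauge3}--\eqref{gauge4} for $u_x,\,u_{xx},\,u_t$, the inverse formula $u = v\mathcal{E}^{-1}(v)$, and the companions $u_x = (v_x + \delta v^2 v^*)\mathcal{E}^{-1}(v)$ together with $u^*_x = (v^*_x - \delta v v^{*2})\mathcal{E}(v)$ (the latter obtained by differentiating $u^* = v^* \mathcal{E}(v)$) all carry over unchanged.

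The first new ingredient is the analog of \eqref{gauge5}. Conjugating and reflecting \eqref{gNdNLS} and using $\overline{u(-x,t)} = u^*(x,t)$, $\overline{\partial_y u|_{y=-x}} = -\partial_x u^*(x,t)$ and $\overline{\partial_y u^*|_{y=-x}} = -u_x(x,t)$, I would first derive
\[
\mathrm{i}\, u^*_t \;=\; \partial^2_x u^* \;-\; \alpha\, u u^* u^*_x \;-\; \beta\, u^{*2} u_x.
\]
Combining this with \eqref{gNdNLS} in $\partial_t(uu^*) = u_t u^* + u u^*_t$ and assembling the cubic terms into a single $x$-derivative yields
\[
\partial_t(uu^*) \;=\; \mathrm{i}\, \partial_x\!\Bigl[\,u_x u^* - u u^*_x + \tfrac{\alpha+\beta}{2}(uu^*)^2 \,\Bigr],
\]
which is exactly \eqref{gauge5} with $\alpha$ replaced by $\alpha+\beta$ in the quartic coefficient. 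Plugging this into \eqref{gauge4} gives the direct analog of \eqref{gauge6}:
\[
\mathrm{i}\, u_t \;=\; \Bigl[\,\mathrm{i} v_t - \delta\, v v_x v^* + \delta\, v^2 v^*_x - \delta\bigl(2\delta + \tfrac{\alpha+\beta}{2}\bigr) v^3 v^{*2}\,\Bigr]\mathcal{E}^{-1}(v).
\]

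The second new ingredient is the $v$-form of the full cubic nonlinearity. Using the expressions for $u_x$ and $u^*_x$, one computes
\[
\alpha\, u u^* u_x + \beta\, u^2 u^*_x \;=\; \Bigl[\,\alpha\, v v^* v_x + \beta\, v^2 v^*_x + \delta(\alpha - \beta)\, v^3 v^{*2}\,\Bigr]\mathcal{E}^{-1}(v).
\]
Summing this with $u_{xx} = [v_{xx} + 3\delta v v_x v^* + \delta v^2 v^*_x + \delta^2 v^3 v^{*2}]\mathcal{E}^{-1}(v)$ from \eqref{gauge3} and with the rewritten $\mathrm{i} u_t$ above, then sorting by monomial type, converts \eqref{gNdNLS} into
\[
\Bigl[\,\mathrm{i} v_t + v_{xx} + (\alpha + 2\delta)\, v v_x v^* + (2\delta + \beta)\, v^2 v^*_x + C_{\alpha,\beta,\delta}\, v^3 v^{*2}\,\Bigr]\mathcal{E}^{-1}(v) \;=\; 0,
\]
where $C_{\alpha,\beta,\delta}$ is an explicit polynomial in $\delta,\alpha,\beta$. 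Imposing the gauge condition $\alpha + 2\delta = 0$ kills the $v v_x v^*$ term, sends $2\delta + \beta$ to $-(\alpha - \beta)$, and reduces $C_{\alpha,\beta,\delta}$ to the cubic constant appearing in \eqref{gNdNLSequiv}.

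The converse is symmetric: given a smooth solution $v$ of \eqref{gNdNLSequiv}, set $u = v\mathcal{E}^{-1}(v)$; the identity $uu^* = vv^*$ ensures that the same algebraic substitutions run in reverse, producing a smooth solution of \eqref{gNdNLS} with initial data $u_0 = v_0 \mathcal{E}^{-1}(v_0)$. The main obstacle throughout is the arithmetic for the $v^3 v^{*2}$ coefficient, where three contributions (from $\mathrm{i} u_t$, from $u_{xx}$, and from the nonlinearity) must combine with matching signs and prefactors; this is, however, a purely finite algebraic check.
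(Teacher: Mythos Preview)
Your proposal is correct and follows precisely the approach the paper intends: the paper does not give an explicit proof of Proposition~\ref{gEquivndnls} but simply states it as the direct analog of Proposition~\ref{Equivndnls}, and you have carried out that analog faithfully, updating \eqref{gauge5}--\eqref{gauge7} to absorb the extra $\beta u^2\partial_x u^*$ term. One remark: if you carry your computation of $C_{\alpha,\beta,\delta}$ through (summing the $v^3v^{*2}$ contributions $-\delta(2\delta+\tfrac{\alpha+\beta}{2})$, $\delta^2$, and $\delta(\alpha-\beta)$, then setting $\delta=-\alpha/2$), you obtain $-\tfrac{\alpha}{2}(\alpha-\tfrac{3\beta}{2})$; the extra factor of $\alpha$ in the coefficient printed in \eqref{gNdNLSequiv} appears to be a typo in the paper, as one confirms by setting $\beta=0$ and comparing with \eqref{NdNLSequiv}.
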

Noticing that \eqref{gNdNLSequiv} can be solved by using the same way as in Section \ref{GloNDNLS}, we can get the same results for Eqs. \eqref{gNdNLSequiv} and  \eqref{gNdNLS} as in Theorem \ref{mainresult2} and Corollary \ref{mainresult3}. \\

\textbf{Acknowledgments.} The authors are supported in part by the NSFC, grant 12171007.
	
	\phantomsection 
	\bibliographystyle{amsplain}
	\addcontentsline{toc}{chapter}{References}

\footnotesize

\vspace{10pt}

	\scriptsize\textsc{Jie Chen: School of Sciences, Jimei University,  Xiamen, 361021 and Institute of Applied Physics and Computational Mathematics, PO Box 8009, P.~R.~of China.}
	
	\textit{E-mail address}: \verb"jiechern@pku.edu.cn"
	\vspace{10pt}
	
\scriptsize\textsc{Yufeng Lu and Baoxiang Wang: School of Sciences, Jimei University,  Xiamen, 361021 and
 School of  Mathematical Sciences, Peking University, Beijing 100871 P.~R.~of China.}
	
	\textit{E-mail address}: \verb"wbx@math.pku.edu.cn"	


\end{document}